\documentclass[11pt,a4paper]{article}

\usepackage{graphicx}
\usepackage[T1]{fontenc}
\usepackage{mathptmx}
\usepackage{amsmath,amssymb,amsthm}
\usepackage{bm}
\usepackage{indentfirst}
\usepackage{color}
\usepackage[colorlinks=true,linkcolor=blue,urlcolor=blue,citecolor=blue]{hyperref}
\usepackage{float}
\usepackage{tikz}
\usepackage{bbding}
\usepackage{subcaption}
\usepackage{enumitem}
\usepackage{booktabs}
\usepackage{algorithm}
\usepackage{algorithmic}
\usepackage{caption}
\DeclareCaptionLabelFormat{figlabel}{Fig#2}  % #2 代表编号，这里生成 Fig1, Fig2...
\usepackage{caption}
\usepackage{url}

\usetikzlibrary{shapes.geometric, calc, fit}
\allowdisplaybreaks[4]

\newtheorem{thm}{Theorem}
\newtheorem{lemma}{Lemma}[section]

\theoremstyle{definition}

\newtheorem{claim}{Claim}[section]
\newtheorem{remark}[thm]{Remark}

\newtheorem{problem}{Problem}

\begin{document}
	
	\title{Spectral and edge conditions for negative Hamiltonian cycles in unbalanced signed graphs}
	\author{Zihao Zha\thanks{Corresponding author. E-mail: \texttt{107552400678@stu.xju.edu.cn}.}\\
		{\footnotesize College of Mathematics and System Science, Xinjiang University, Urumqi 830046, China}}
	\date{}
	
	\maketitle
	
	{\flushleft\large\bf Abstract:}
	Spectral conditions for Hamiltonian cycles are a fundamental subject in extremal spectral graph theory. For simple unsigned graphs, Fiedler and Nikiforov [Linear Algebra Appl., 432 (2010), 2170--2173] pioneered this direction, and Li and Ning [Linear Multilinear Algebra, 64 (2016), 2252--2269] later extended these results by introducing minimum degree parameters. In this paper, we study the spectral conditions for the existence of a negative Hamiltonian cycle in unbalanced signed graphs of order $n > 8$. We first determine the maximum index among all \(n\)-vertex unbalanced signed graphs containing no negative Hamiltonian cycle, and characterize the corresponding extremal graphs up to switching isomorphism. Furthermore, we establish an upper bound on the number of edges in unbalanced signed graphs of order $n \ge 7$ with frustration index one, and classify all extremal graphs achieving this bound.
	
	\vspace{0.1cm}
	\begin{flushleft}
		\textbf{Keywords:} Signed graph; Largest eigenvalue; Negative Hamiltonian cycle; Edge extremal problem
	\end{flushleft}
	\textbf{AMS Classification:} 05C50; 05C22
	
	\section{Introduction}\label{1}
	
	Let \(G\) be a simple graph with vertex set \(V(G)\) and edge set \(E(G)\), and let \(e(G) = |E(G)|\). An underlying graph \(G\) together with a signature \(\sigma: E(G) \to \{-1, +1\}\) forms a signed graph \(\Gamma = (G, \sigma)\). In a signed graph, edge signs are usually interpreted as \(\pm 1\). An edge \(e\) is positive (resp. negative) if \(\sigma(e) = +1\) (resp. \(\sigma(e) = -1\)). A cycle in \(\Gamma\) is said to be positive if it contains an even number of negative edges; otherwise it is negative. A Hamiltonian cycle is called negative if the product of the signs of the edges in the cycle is negative. The signed graph \(\Gamma\) is balanced if it has no negative cycles; otherwise it is unbalanced. Let \(U \subset V(G)\). The operation of reversing the signs of all edges between \(U\) and \(V(G) \setminus U\) is called a switching operation. If a signed graph \(\Gamma'\) is obtained from \(\Gamma\) by applying finitely many switching operations, then \(\Gamma\) is said to be switching equivalent to \(\Gamma'\). Two signed graphs are called switching isomorphic if one of them is switching equivalent
	to an isomorphic copy of the other. For a signed graph \(\Gamma = (G, \sigma)\), we write \(e(\Gamma) = e(G)\). Let \(N_{\Gamma}(v_i)\) denote the set of neighbors of a vertex \(v_i\) in \(\Gamma\) and \(d_{\Gamma}(v_i) = \lvert N_{\Gamma}(v_i) \rvert\) the degree of \(v_i\) in \(\Gamma\). The adjacency matrix of \(\Gamma\) is defined as \(A(\Gamma) = (a_{ij}^\sigma)\), where \(a_{ij}^\sigma = \sigma(v_i v_j)\) if \(v_i \sim v_j\), and \(a_{ij}^\sigma = 0\) otherwise. The eigenvalues of \(\Gamma\) are the eigenvalues of \(A(\Gamma)\), and \(\lambda_1(\Gamma)\) denotes the index of \(\Gamma\). 
	
	\noindent Let \[
	\lambda_1(\Gamma) \ge \lambda_2(\Gamma) \ge \cdots \ge \lambda_n(\Gamma)
	\] be the eigenvalues of  $A(\Gamma)$. The spectral radius of $\Gamma$ is 
	\[
	\rho(\Gamma) = \max_{1 \leq i \leq n} |\lambda_i(\Gamma)|=\max\{\lambda_1(\Gamma), -\lambda_n(\Gamma)\}.
	\]
	
	 \noindent For more details about the notion of signed graphs, we refer to \cite{ Zaslavsky1982}.
	
	Let $\mathcal{F}$ be a family of graphs. A graph $G$ is $\mathcal{F}$-free if $G$ does not contain any graph in $\mathcal{F}$ as a subgraph. The classical spectral Tur\'an-type problem is to determine the maximum spectral radius of an $\mathcal{F}$-free graph of order $n$. This problem was originally proposed by Nikiforov \cite{Nikiforov2010}. For more information on the unsigned version, we refer the readers to \cite{Nikiforov2007, Nikiforov2008, Wilf1986}. In this paper, we focus on the spectral Tur\'{a}n-type problems in signed graphs.
	
		Denote by $C_r^{-}$ and $K_r^{-}$ the collections of all unbalanced signed graphs whose underlying graphs are $C_r$ and $K_r$, respectively.  
	Recently, the spectral radius or index of signed graphs forbidding certain negative subgraphs has been extensively investigated.  
	In 2024, Wang, Hou and Li~\cite{Wang2024} established the maximum spectral radius for $C_3^{-}$-free signed graphs.  
	In the same year, Wang and Lin~\cite{wang2023} obtained the maximum index among $C_4^{-}$-free signed graphs.  
	In 2025, Wang, Hou and Huang~\cite{WangHouHuang2025} determined the largest index of $C_{2k+1}^{-}$-free signed graphs for $3 \le k \le \frac{n}{10}-1$.  
	Concerning $K_r^{-}$-free signed graphs, Chen and Yuan~\cite{chen2023} in 2024 solved the problem of the maximum spectral radius for $K_4^{-}$-free graphs, while Wang~\cite{Wang2024Spectral} addressed the case of $K_5^{-}$ in the same year.  
	Additionally, Xiong and Hou~\cite{Xiong2024} characterized the extremal index of $K_{r+1}^{-}$-free signed graphs for $3 \le r \le n-1$. Very recently, Li and Qin~ \cite{li2026} determined the maximum index among all \(tC_3^-\)-free unbalanced signed graphs and further characterized the second maximum index.
	
 A cycle is Hamiltonian if it visits every vertex of the graph exactly once. A graph containing a Hamiltonian cycle is called a Hamiltonian graph. Over the past decade, considerable effort has been devoted to exploring spectral conditions for the Hamiltonicity of unsigned graphs. In 2010, Fiedler and Nikiforov \cite{FiedlerNikiforov2010} initiated this direction by establishing foundational spectral thresholds for graphs containing a Hamiltonian cycle. Subsequently, Lu, Liu, and Tian \cite{LuLiuTian2012} provided further spectral characterizations for Hamiltonian graphs. Significant progress was later achieved by introducing minimum degree constraints; specifically, Ning and Ge \cite{NingGe2015} derived refined conditions for graphs with minimum degree at least two, and Benediktovich \cite{Benediktovich} further sharpened the spectral threshold to \(n-3\) for such graphs. Later, by treating the minimum degree as a variable parameter, Li and Ning \cite{LiNing2016} provided a unified extension of the earlier spectral results \cite{FiedlerNikiforov2010, NingGe2015}. 
 
 However, the Hamiltonicity results discussed in the preceding paragraph concern unsigned graphs. A natural question arises: what spectral conditions are there for the existence of a negative Hamiltonian cycle in unbalanced signed graphs? This amounts to the following spectral Tur\'{a}n-type problem for signed graphs.
 
 \begin{problem}\label{prob:1}
 	What is the maximum index among all  unbalanced signed graphs of order $n > 8$ that contain no negative Hamiltonian cycle as a subgraph?
 \end{problem}
 
 In this paper, we completely resolve the above problem. Let \((K_{n-1}, \sigma)\) be a signed complete graph on the vertex set \(\{v_1, v_2, v_4, \ldots, v_n\}\) with exactly one negative edge. The signed graph $\Gamma_1$ is obtained by appending a new vertex $v_3$ to $(K_{n-1},\sigma)$ and joining $v_3$ to the two vertices $v_1, v_2$, with $v_1v_2$ being the unique negative edge (shown in red); all other edges are taken to be positive. The triangle \(v_1v_2v_3v_1\) is negative, and hence \(\Gamma_1\) is unbalanced. Moreover, \(d_{\Gamma_1}(v_3) = 2\), so every Hamiltonian cycle of the underlying graph must contain both \(v_1v_3\) and \(v_2v_3\). Such a cycle cannot also contain \(v_1v_2\), since otherwise the three edges \(v_1v_2, v_2v_3, v_3v_1\) would form a proper cycle on \(\{v_1, v_2, v_3\}\), leaving the remaining vertices outside the Hamiltonian cycle. Thus every Hamiltonian cycle avoids the unique negative edge \(v_1v_2\), and therefore \(\Gamma_1\) contains no negative Hamiltonian cycle. Our main result is stated as follows.
 
 \begin{figure}[ht]
 	\centering
 	\begin{tikzpicture}[
 		vertex/.style={circle, draw, fill=black, minimum size=8pt, inner sep=0pt},
 		bigellipse/.style={ellipse, draw, thick, minimum width=4.5cm, minimum height=3cm},
 		blackedge/.style={draw=black, thick}
 		]
 		% 椭圆 K_{n-1}，中心在 (0,0)
 		\node[bigellipse] (K) at (0,0) {};
 		\node at (K.center) {\textsf{\(K_{n-1}\)}};
 		
 		% v1, v2 位于第二、第一象限，都在椭圆内部
 		\node[vertex, label={[label distance=2pt]above left:$v_1$}] (v1) at (-1.3, 1.0) {};
 		\node[vertex, label={[label distance=2pt]above right:$v_2$}] (v2) at ( 1.3, 1.0) {};
 		
 		% v3 在 y 轴正半轴，且在椭圆外面
 		\node[vertex, label={[label distance=2pt]above:$v_3$}] (v3) at ( 0, 3.0) {};
 		
 		% 连接 v1-v3, v2-v3 黑边
 		\draw[blackedge] (v1) -- (v3);
 		\draw[blackedge] (v2) -- (v3);
 		
 		% 连接 v1-v2 红边
 		\draw[red, thick] (v1) -- (v2) node[midway, below, text=black] {\scriptsize \(-\)};
 	\end{tikzpicture}
 	\caption{The signed graph \(\Gamma_1\) (red lines indicate negative edges).}
 	\label{fig:gamma1}
 \end{figure}
 
 \begin{thm}\label{thm:spectral}
 	Let $\Gamma = (G, \sigma)$ be an unbalanced signed graph of order $n > 8$. 
 	If $\lambda_1(\Gamma) \ge \lambda_1(\Gamma_1)$, then $\Gamma$ contains a negative Hamiltonian cycle unless $\Gamma$ is switching isomorphic to $\Gamma_1$ (see Fig.~\ref{fig:gamma1}).
 \end{thm}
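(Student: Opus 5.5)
The plan is a standard two-stage argument: first turn the spectral hypothesis into a lower bound on $e(\Gamma)$, then settle the resulting dense case combinatorially. Throughout I work up to switching, which changes neither $\lambda_1$ nor the sign of any cycle.

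\textbf{Step 1: a lower bound on $\lambda_1(\Gamma_1)$.} Test the Rayleigh quotient of $A(\Gamma_1)$ on the vector that is $1$ on $\{v_4,\dots,v_n\}$, $a$ on $\{v_1,v_2\}$ and $b$ on $v_3$. Optimizing over $a$ and $b$ (equivalently, solving the equitable-partition quotient, which here has a $3\times 3$ matrix) should give $\lambda_1(\Gamma_1)>n-3$, and more precisely $\lambda_1(\Gamma_1)=n-2-\varepsilon_n$ with $\varepsilon_n$ small for $n>8$.

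\textbf{Step 2: from the index to the number of edges.} Take a unit eigenvector $x$ for $\lambda_1(\Gamma)$. We have $\lambda_1(\Gamma)=x^{T}A(\Gamma)x\le |x|^{T}A(G)|x|\le\rho(G)$. The Hong-type bound $\rho(G)\le\sqrt{2m-n+1}$ needs no isolated vertices; I would either check that its proof goes through for $\lambda_1(\Gamma)$ directly via Cauchy--Schwarz on $|x|$, or first remove isolated vertices, which only helps. Combined with Step 1 this gives $2m-n+1>(n-3)^2$, so $m\ge\binom{n-1}{2}-c$ for a small constant $c$. Small-degree vertices can be handled the same way. If $\delta(G)\le 1$, then $\Gamma$ is a signed graph on $n-1$ vertices plus a pendant or isolated vertex, and deleting that vertex and comparing with $K_{n-1}$ carrying one negative edge should give $\lambda_1(\Gamma)<\lambda_1(\Gamma_1)$. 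I would use interlacing together with the fact that an unbalanced $K_{n-1}$ has index strictly below $n-2$, tracking carefully how much a pendant edge can add. So we may assume $\delta(G)\ge 2$.

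\textbf{Step 3: the structural edge lemma.} Here I would prove the following: if $\Gamma$ is unbalanced, $\delta(G)\ge 2$ and $e(G)\ge\binom{n-2}{2}+4$ (a Li--Ning type threshold), then either $\Gamma$ has a negative Hamiltonian cycle, or $G\subseteq K_{n-1}+v$ with $d(v)=2$ and $\Gamma$ is switching equivalent to a subgraph of $\Gamma_1$, or $G$ belongs to a short explicit list of exceptions. Examples of such exceptions are $K_2\vee(K_{n-4}\cup 3K_1)$-like graphs, or all negative cycles confined to a small set.

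The route is as follows. Dense graphs with $\delta\ge2$ are Hamiltonian, in fact Hamilton-connected or pancyclic, by Ore/Bondy--Chv\'atal closure arguments. Given one Hamiltonian cycle $C$, suppose it is positive. Unbalance supplies a negative cycle. Using a chord $uv$ of $C$ together with high degrees, I would reroute $C$ (P\'osa rotation, or a $4$-cycle swap $u_iu_{i+1},u_ju_{j+1}\to u_iu_j,u_{i+1}u_{j+1}$) so that the symmetric difference of the two Hamiltonian cycles is a negative closed walk. This flips the sign, and it works whenever the negative cycles are not all forced through a degree-$2$ vertex in the rigid way that occurs in $\Gamma_1$.

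This step, which characterizes when every Hamiltonian cycle has the same sign, is the main obstacle. My first attempt would be to show that in the dense case the cycle space is generated by $4$-cycles of the form $u_iu_{i+1}u_{j+1}u_j$ together with triangles through Hamiltonian-compatible chords. If all such swaps preserve sign, then $\Gamma$ is balanced on a large $2$-connected core, and the only possible negativity lives at a degree-$2$ vertex, which gives the $\Gamma_1$ structure.

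\textbf{Step 4: spectral comparison of the exceptions.} Each non-$\Gamma_1$ exceptional family is compared with $\Gamma_1$. If $G$ is a proper subgraph of the underlying graph of $\Gamma_1$, or has the same underlying graph but a different signing, then a Perron-type argument on a switched eigenvector shows its index is strictly smaller. Here I would use that deleting a positive edge between entries of the same sign decreases $\lambda_1$, and that the unique negative edge must be placed as in $\Gamma_1$ to maximize the index. For the remaining explicit exceptions, I would compute quotient-matrix characteristic polynomials and check that they are smaller than $\lambda_1(\Gamma_1)$ for $n>8$; this is where the hypothesis $n>8$ is used. Equality therefore forces $\Gamma$ to be switching isomorphic to $\Gamma_1$.
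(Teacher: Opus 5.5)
There is a genuine gap, and it is in Step~3: the structural lemma is false as stated, not merely unproved. After Step~2 you keep only $e(G)$ and $\delta(G)$, but edge density says nothing about the odd-cycle part of the signing. For even $n$, $(K_n,-)$ is unbalanced, has $\delta=n-1$ and $\binom n2$ edges, and has no negative Hamiltonian cycle, since every Hamiltonian cycle carries $n$ negative edges. The same holds for $(G,-)$ with any non-bipartite $G$ in your edge range, so these exceptions cannot form a short list. For odd $n$, take $(K_{n-1},-)$ and add a vertex $w$ joined to $a$ by a positive edge and to $b$ by a negative edge. Every Hamiltonian cycle is $awb$ followed by an all-negative path with $n-2$ (odd) edges, so it is positive. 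The underlying graph is that of $\Gamma_1$, yet $\Gamma$ is not switching equivalent to a subgraph of $\Gamma_1$, since its frustration index is far above $1$.

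Your rerouting mechanism cannot see these examples. Two Hamiltonian cycles $C,C'$ differ by an element of the cycle space with $2n-2|C\cap C'|$ edges, which is an even number. So if the signing is antibalanced (switching equivalent to all-negative), every Hamiltonian cycle has sign $(-1)^n$ and every $4$-cycle swap preserves sign. Hence your inference ``all swaps preserve sign $\Rightarrow$ balanced on a large core'' is wrong; the correct alternative is ``balanced \emph{or antibalanced}''. Step~4 only handles explicit underlying graphs and one-negative-edge signings, so this family is never excluded.

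The missing idea is to use the spectral hypothesis to control the signing itself, not just to bound $e(G)$. The paper takes $\Gamma$ of maximum index and switches so that a $\lambda_1$-eigenvector $X$ is nonnegative (Lemma~\ref{lemma:4}). It then uses monotonicity under adding positive edges, deleting negative edges and reversing negative edges (Lemma~\ref{lemma:6}); strictness comes from $X$ having at most one zero entry. This forces all negative edges onto a shortest negative cycle. That cycle has length at most $4$, because $C_{3,4}^-$-free unbalanced graphs have index at most $\gamma_n<n-3$ (Lemma~\ref{lemma:11}). Reversing two negative edges then leaves exactly one negative edge $v_1v_2$.

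Only at that point is a graph-theoretic statement enough. A negative Hamiltonian cycle is then the same as a Hamiltonian $v_1$--$v_2$ path. Hamilton-connectedness (Lemmas~\ref{lemma:9} and \ref{lem:ZhouWang}, with Lemma~\ref{lemma:10} for $n=9,10$) rules out $\delta\ge 3$. The case $\delta=2$ is settled by showing that the degree-$2$ vertex is adjacent to exactly $v_1,v_2$.

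To keep your route instead, you would have to prove the balanced/antibalanced dichotomy on a dense core. You would then have to kill the antibalanced branch spectrally, for instance via $\lambda_1((G,-))=-\lambda_n(G)\le n/2$, including the mixed configurations near the threshold. Your outline contains none of this.

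A smaller point concerns the case $\delta\le 1$. Interlacing gives no upper bound on $\lambda_1(\Gamma)$ in terms of $\lambda_1(\Gamma-u)$. Moreover, the margin is only of order $n^{-2}$: $\lambda_1(\Gamma_1)\approx n-2-\frac{4}{n+2}$, versus $\approx n-2-\frac{4}{n+1}$ for $K_{n-1}$ with one negative edge. So ``index below $n-2$'' is not enough, and an exact computation like the paper's $Q_4$ is needed.
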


 The study of edge conditions for Hamiltonian cycles in unsigned graphs is classical, initiated by Dirac's minimum degree theorem~\cite{Dirac1952} and followed by Ore's edge threshold~\cite{Ore1961}, Chv\'{a}tal's degree sequence condition~\cite{Chvatal1972}, and Bondy and Chv\'{a}tal's closure operation~\cite{BondyChvatal1976}. A major breakthrough came from Erd\H{o}s~\cite{Erdos1962}, who characterized the unique extremal non-Hamiltonian graph under a prescribed minimum degree. Later, F\"{u}redi, Kostochka, and Luo refined this result via stability analysis~\cite{FurediErdos2018} and extended it to Hamilton-connected graphs. Recently, Ai, Lei, Ning, and Shi~\cite{Ai2025} proposed a unified framework using feasible graph parameters. Building upon this, Liu, Ning, and Wang ~\cite{LiuNingWang2026} provided a complete extension of Erd\H{o}s's theorem, fully resolving the long-standing spectral and signless Laplacian spectral problems proposed by Li and Ning~\cite{LiNing2016}. It is then natural to ask what edge conditions guarantee the
 existence of a negative Hamiltonian cycle in an unbalanced signed graph. This amounts to the following Tur\'{a}n-type problem for signed graphs.
 
 \begin{problem}\label{prob:2}
 	What is the maximum number of edges in an unbalanced signed graph of order $n$ that contains no negative Hamiltonian cycle as a subgraph?
 \end{problem}
 
 The frustration index of a signed graph \(\Gamma\) is the minimum number of edges in \(\Gamma\) whose removal results in a balanced signed graph. In this paper, we address Problem~\ref{prob:2} under the additional assumption that the frustration index is 1. Our result is as follows.
 
 \begin{thm}\label{thm:edge}
 	Let $\Gamma = (G, \sigma)$ be an unbalanced signed graph of order $n \ge 7$ with frustration index $1$. 
 	If $\Gamma$ contains no negative Hamiltonian cycle as a subgraph, then
 	$e(\Gamma) \le \frac{n(n-1)}{2} - (n-3)$,
 	with equality if and only if $\Gamma$ is switching isomorphic to $\Gamma_1$ (see Fig.~\ref{fig:gamma1}).
 \end{thm}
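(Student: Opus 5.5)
Since the frustration index is $1$, I would first switch $\Gamma$ so that it has exactly one negative edge, say $e=uv$. Switching preserves the sign of every cycle, so it does not affect the hypotheses or the conclusion. In this normal form a Hamiltonian cycle is negative exactly when it uses $e$. The statement then becomes purely about the underlying graph: $G$ has no Hamiltonian cycle through the edge $uv$, i.e.\ $G$ has no Hamiltonian $u$--$v$ path. Unbalancedness only says that $uv$ lies on some cycle; since the frustration index is exactly one, $G-uv$ is not a forest through $u,v$ in the relevant sense, but this will play no role in the bound. So the aim is to show that if $G$ has $n\ge 7$ vertices and no Hamiltonian $u$--$v$ path, then $e(G-uv)\le \binom{n}{2}-(n-3)-1$, and to identify the extremal graphs.

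Next I would use a closure-type argument in the style of Bondy--Chv\'atal for Hamiltonian paths with fixed ends. Suppose $e(G)\ge \binom{n}{2}-(n-3)$, so that $G$ misses at most $n-3$ edges. Let $H=G-uv$, on the same vertex set. If $H$ had a Hamiltonian $u$--$v$ path we would be done, so assume it does not. The standard closure lemma says: if $x,y$ are nonadjacent with $d(x)+d(y)\ge n+1$, then adding $xy$ does not create a Hamiltonian $u$--$v$ path. This is because a Hamiltonian $u$--$v$ path in $G+uv$ is the same as a Hamiltonian cycle through $uv$, and the $n+1$ threshold is Ore's condition for Hamilton-connectedness. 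I would iterate this, or argue directly, as follows.

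Take a maximal counterexample with $G+uv$ saturated, so that adding any missing edge creates a Hamiltonian cycle through $uv$. For each missing edge $xy$ there is a Hamiltonian path from $x$ to $y$ that contains $uv$. The usual crossing argument then gives $d(x)+d(y)\le n$. Summing these inequalities over the missing edges, together with the fact that at most $n-3$ edges are missing, forces some vertex $w$ of degree at most $2$. Indeed, if every vertex had degree at least $3$, then the complement would have maximum degree at most $n-4$, and the degree-sum bound would fail once $n\ge7$. A vertex of degree $1$ makes any Hamiltonian cycle impossible, and then $G$ has at most $\binom{n-1}{2}+1<\binom n2-(n-3)$ edges, a contradiction. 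So $d(w)=2$ with neighbours $a,b$.

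If $\{a,b\}\ne\{u,v\}$, I would show directly that a Hamiltonian cycle through $uv$ and through $a w b$ exists. This uses the near-completeness of $G-w$, which misses at most $(n-3)-(n-3)=0$ edges beyond those at $w$, so $G-w$ is complete, and $n-1\ge 6$ leaves room to route the cycle. That gives a contradiction. Hence $\{a,b\}=\{u,v\}$ and $G-w=K_{n-1}$, which is exactly $\Gamma_1$ with equality in the edge count. \textbf{The main obstacle} is the counting step that produces a degree-$2$ vertex. If the direct degree-sum argument is too weak, I would instead take a vertex of minimum degree $\delta$ and use the Erd\H{o}s-type bound $e\le\max\{\binom{n-\delta}{2}+\delta^2,\binom{\lceil (n+1)/2\rceil}{2}+\lfloor (n-1)/2\rfloor^2\}$ adapted to Hamilton-connectedness (shifted by one in $\delta$). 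I would then check that for $n\ge7$ only $\delta=2$ attains $\binom n2-(n-3)$.
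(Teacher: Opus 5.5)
\textbf{Comparison with the paper.} Your route is genuinely different from the paper's. After switching to a single negative edge, the paper proceeds in three steps, all relying on cited theorems:
\begin{enumerate}
\item It uses the Wang--Hou--Li edge bound for $C_3^-$-free graphs to obtain a negative triangle. Its later argument never actually uses this triangle.
\item It uses Ore's theorem that $e(G)\ge\binom{n-1}{2}+3$ forces Hamilton-connectedness. This pins the number of non-edges at exactly $n-3$.
\item It uses the Zhou--Wang theorem to rule out $\delta\ge 3$, checking that theorem's exceptional graphs by edge count.
\end{enumerate}
Only the paper's last step coincides with yours: a degree-$2$ vertex $w$ carries all non-edges, so $G-w=K_{n-1}$, and $N(w)=\{u,v\}$ follows by rerouting a Hamiltonian cycle of $K_{n-1}$ through $uv$ and $ab$.

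You instead saturate with respect to the fixed-pair property ``no Hamiltonian $u$--$v$ path'' and run the crossing argument yourself. Take a non-edge $xy$ of the saturated graph and a Hamiltonian path $x=p_1\cdots p_n=y$ containing $uv=p_kp_{k+1}$. Having both $x\sim p_{j+1}$ and $y\sim p_j$ is forbidden for every $j\neq k$. Losing the single index $k$ gives exactly $d(x)+d(y)\le n$. This is correct, avoids all external Hamilton-connectedness theorems, and uses only the property actually needed. Two remarks on the write-up:
\begin{itemize}
\item Your justification of the closure lemma via ``Ore's condition for Hamilton-connectedness'' is muddled; the crossing argument above is the real justification.
\item Unbalancedness just says that $uv$ is not a bridge, which indeed plays no role.
\end{itemize}

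\textbf{The counting step fails as written.} Write $\bar{d}(v)=n-1-d(v)$ and let $m'\le n-3$ be the number of non-edges. Summing $d(x)+d(y)\le n$ over non-edges gives $\sum_v\bar{d}(v)^2\ge (n-2)m'$. Combined with $\bar{d}(v)\le n-4$, this only yields $n-2\le 2(n-4)$, i.e.\ $n\ge 6$, so there is no contradiction.

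\textbf{The repair} is to use a single non-edge $xy$ instead of summing.
\begin{itemize}
\item Since $\bar{d}(x)+\bar{d}(y)\ge n-2$, at least $\bar{d}(x)+\bar{d}(y)-1\ge n-3\ge m'$ non-edges meet $\{x,y\}$.
\item Hence $m'=n-3$, every non-edge meets $\{x,y\}$, and $\bar{d}(x)+\bar{d}(y)=n-2$.
\item Suppose $\bar{d}(x),\bar{d}(y)\ge 2$. Take non-edges $xz$ with $z\neq y$ and $yz'$ with $z'\neq x$. Every non-edge at $z$ or $z'$ goes to $x$ or $y$, so $\bar{d}(z),\bar{d}(z')\le 2$.
\item The inequality applied to $xz$ and $yz'$ then gives $\bar{d}(x),\bar{d}(y)\ge n-4$. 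Hence $n-2\ge 2n-8$, which is impossible for $n\ge 7$.
\end{itemize}
So one of $x,y$ is a vertex $w$ of degree $2$ incident with all $n-3$ non-edges. This yields the edge bound and $G-w=K_{n-1}$ at once, and the Erd\H{o}s-type fallback is unnecessary. With this repair your proof is complete.
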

 
	The paper is organized as follows. Section~\ref{1} introduces the main definitions and notation and states the main results. Section~\ref{2} is devoted to the proof of our main spectral result (Theorem~\ref{thm:spectral}). In Section~\ref{3}, we prove an edge extremal result (Theorem~\ref{thm:edge}).

% ==================== 第二个图：新图 + 原第二个图 ====================
\begin{figure}[ht]
	\centering
	
	% 左侧子图：新生成的图（带 v4 的 Γ1）
	\begin{subfigure}[b]{0.45\textwidth}
		\centering
		\begin{tikzpicture}[
			vertex/.style={circle, draw, fill=black, minimum size=8pt, inner sep=0pt},
			bigellipse/.style={ellipse, draw, thick, minimum width=4.5cm, minimum height=3cm},
			blackedge/.style={draw=black, thick}
			]
			\node[bigellipse] (K) at (0,0) {};
			\node at (K.center) {\textsf{\(K_{n-2}\)}};
			
			\node[vertex, label={[label distance=2pt]above left:$v_1$}] (v1) at (-1.3, 1.0) {};
			\node[vertex, label={[label distance=2pt]above right:$v_2$}] (v2) at ( 1.3, 1.0) {};
			
			% v3 向左移动，v4 放置在对称的右侧
			\node[vertex, label={[label distance=2pt]above:$v_3$}] (v3) at (-1.5, 3.0) {};
			\node[vertex, label={[label distance=2pt]above:$v_4$}] (v4) at ( 1.5, 3.0) {};
			
			% 连接黑边
			\draw[blackedge] (v1) -- (v3);
			\draw[blackedge] (v2) -- (v3);
			\draw[blackedge] (v1) -- (v4);
			\draw[blackedge] (v2) -- (v4);
			\draw[blackedge] (v1) -- (v2);

		\end{tikzpicture}
		\caption*{\(K_2 \vee (K_{n-4} \cup I_2)\)} % 如果你有新命名，可以修改这里
	\end{subfigure}
	\hfill
	% 右侧子图：原来的第二个图
	\begin{subfigure}[b]{0.45\textwidth}
		\centering
		\begin{tikzpicture}[
			vertex/.style={circle, draw, fill=black, minimum size=8pt, inner sep=0pt},
			bigellipse/.style={ellipse, draw, thick, minimum width=4.2cm, minimum height=2.8cm},
			blackedge/.style={draw=black, thick}
			]
			\node[bigellipse] (K) at (0,0) {};
			\node at (K.center) {\textsf{\(K_{n-2}\)}};
			
			\node[vertex, label={[label distance=2pt]above:$v_3$}] (v3) at ( 0, 1.5) {};
			\node[vertex, label={[label distance=2pt]above left:$v_1$}] (v1) at (-1.6, 3.0) {};
			\node[vertex, label={[label distance=2pt]above right:$v_2$}] (v2) at ( 1.6, 3.0) {};
			
			\draw[blackedge] (v1) -- (v2);
			\draw[blackedge] (v3) -- (v1);
			\draw[blackedge] (v3) -- (v2);
			
			\draw[blackedge] (v3) -- (K.north);
		\end{tikzpicture}
		\caption*{\(K_1 \vee (K_{n-3} \cup K_2)\)}
	\end{subfigure}
	
	\caption{The graphs \(K_2 \vee (K_{n-4} \cup I_2)\) and  \(K_1 \vee (K_{n-3} \cup K_2)\).}
	\label{fig:new_and_k1}
\end{figure}
	
	\section{Proof of Theorem \ref{thm:spectral}}\label{2}
	
	Let $M$ be a real symmetric matrix of order $n$, and let $[n] = \{1, 2, \dots, n\}$. 
	Given a partition $\Pi: [n] = X_1 \cup X_2 \cup \cdots \cup X_k$, the matrix $M$ 
	can be written as
	\[
	M = \begin{bmatrix}
		M_{1,1} & M_{1,2} & \cdots & M_{1,k} \\
		M_{2,1} & M_{2,2} & \cdots & M_{2,k} \\
		\vdots & \vdots & \ddots & \vdots \\
		M_{k,1} & M_{k,2} & \cdots & M_{k,k}
	\end{bmatrix}.
	\]
	If all row sums of $M_{i,j}$ are the same, say $b_{i,j}$, for all $i, j \in \{1, 2, \dots, k\}$, 
	then $\Pi$ is called an \emph{equitable partition} of $M$, and the matrix $Q = (b_{i,j})_{k \times k}$ 
	is called an \emph{equitable quotient matrix} of $M$.
	
	\begin{lemma}[\cite{Brouwer2011}]\label{lemma:1}
		Let $M$ be a real symmetric matrix, and let $Q$ be an equitable quotient matrix of $M$. Then the matrix $M$ has the following two kinds of eigenvalues.
		\begin{enumerate}[label=(\roman*)]
			\item The eigenvalues coincide with the eigenvalues of $Q$.
			\item The eigenvalues of $M$ not in the spectra of $Q$ remain unchanged if some scalar multiple of the all-one block $J$ is added to block $M_{ij}$ for each $1 \le i, j \le k$.
		\end{enumerate}
	\end{lemma}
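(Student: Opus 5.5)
The plan is to encode the partition by its characteristic matrix and reduce both parts to linear algebra on an invariant subspace. Let $P$ be the $n\times k$ matrix whose $j$-th column is the indicator vector $\mathbf{1}_{X_j}$ of $X_j$. Since the sets $X_j$ are nonempty and disjoint, $P$ has full column rank $k$. The equitable condition (every row of $M_{i,j}$ sums to $b_{i,j}$) is equivalent to the single matrix identity
\[
MP = PQ .
\]
Everything will be derived from this identity together with the symmetry of $M$.

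For part (i), take $x$ with $Qx=\theta x$. Then $M(Px)=PQx=\theta\,Px$, and $Px\neq 0$ because $P$ is injective, so $\theta$ is an eigenvalue of $M$. More precisely, $U=\operatorname{col}(P)$ is an $M$-invariant subspace, and the restriction $M|_U$ is represented by $Q$ in the basis of columns of $P$. Hence the full spectrum of $Q$, with multiplicities, occurs in the spectrum of $M$. Because $M$ is symmetric, $W=U^{\perp}$ is also $M$-invariant. Therefore the spectrum of $M$ is the multiset union of $\operatorname{spec}(Q)$ and $\operatorname{spec}(M|_W)$. The eigenvalues in the second statement are exactly those of $M|_W$, and their eigenvectors can be chosen in $W$, that is, orthogonal to every $\mathbf{1}_{X_j}$.

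For part (ii), adding $c_{ij}J$ to the block $M_{i,j}$ for all $i,j$ gives
\[
M' = M + P C P^{T}, \qquad C=(c_{ij}).
\]
To keep $M'$ symmetric, as the lemma implicitly intends, we take $c_{ij}=c_{ji}$. For $y\in W$ we have $P^{T}y=0$, so $M'y=My$, and hence $M'$ and $M$ agree on $W$. Moreover $M'P=P(Q+CD)$ with $D=\operatorname{diag}(|X_1|,\dots,|X_k|)$. So $U$ is $M'$-invariant, $W$ is $M'$-invariant by symmetry, and $\operatorname{spec}(M')=\operatorname{spec}(Q+CD)\cup\operatorname{spec}(M|_W)$. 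The eigenvalues coming from $W$ are therefore unchanged.

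The only delicate point is bookkeeping of multiplicities. An eigenvalue of $M|_W$ may coincide numerically with an eigenvalue of $Q$. The clean way to state and prove (ii) is in terms of the $W$-part of the spectrum rather than literally ``values not in $\operatorname{spec}(Q)$''. With that reading the argument above is complete, and nothing else is expected to be an obstacle.
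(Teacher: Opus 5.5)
Your proof is correct. The paper gives no proof of this lemma; it quotes it from Brouwer--Haemers, and your argument is the standard one behind that citation. The characteristic matrix $P$ gives $MP=PQ$. Symmetry of $M$ splits $\mathbb{R}^n=\operatorname{col}(P)\oplus\operatorname{col}(P)^{\perp}$ into $M$-invariant pieces, with $Q$ representing $M$ on $\operatorname{col}(P)$. The block modification is $M'=M+PCP^{\top}$, which agrees with $M$ on $W=\operatorname{col}(P)^{\perp}$.

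Two small remarks:
\begin{enumerate}
\item You do not need $c_{ij}=c_{ji}$. For $y\in W$ you have $M'y=My\in W$ directly, so $W$ is $M'$-invariant for an arbitrary $C$. The characteristic polynomial of $M'$ therefore still factors as that of $Q+CD$ times that of $M|_W$.
\item Your $W$-part formulation of (ii) implies the literal one. An eigenvalue of $M$ not in $\operatorname{spec}(Q)$ must lie in $\operatorname{spec}(M|_W)=\operatorname{spec}(M'|_W)\subseteq\operatorname{spec}(M')$. This is exactly the form the paper uses, for instance when it concludes $\lambda_1(A(\Gamma_1))=\lambda_1(Q_1)$ from the fact that all eigenvalues of $A_1$ lie in $\{-1,0\}$.
\end{enumerate}
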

	
	We denote by $\delta(G)$ and $A(G)$ the minimum degree and the adjacency matrix of a graph $G$, respectively. The spectral radius $\lambda_1(G)$ is the largest eigenvalue of $A(G)$. The disjoint union $G_1\cup G_2$ has vertex set $V(G_1)\cup V(G_2)$ and edge set $E(G_1)\cup E(G_2)$. By starting with a disjoint union of two graphs \(G_1\) and \(G_2\) and adding edges joining every vertex of \(G_1\) to every vertex of \(G_2\), one obtains the join of \(G_1\) and \(G_2\), denoted \(G_1 \vee G_2\). Let $K_{s,t}$ be the complete bipartite graph with partitions of sizes $s$ and $t$. We write $K_n$ and $I_n$ for the complete graph and empty graph on $n$ vertices, respectively. 
	
	\begin{lemma}[\cite{Benediktovich}]\label{lemma:2}
		Let $G$ be a graph on $n > 8$ vertices with $\delta(G) \geq 2$. If
		\[
		\lambda_1(G) \geq n-3,
		\]
		then $G$ has a Hamiltonian cycle unless $G = K_2 \vee (K_{n-4} \cup I_2)$ or 
		$G = K_1 \vee (K_{n-3} \cup K_2)$. When $n=9$, there are two additional exceptions: 
		$G \in \{ I_5 \vee K_4,\ K_3 \vee (K_{1,4} \cup K_1) \}$.
	\end{lemma}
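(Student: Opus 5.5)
The approach is to turn the spectral hypothesis into an edge-count hypothesis, apply an Erdős-type stability theorem for non-Hamiltonian graphs with minimum degree at least two, and then finish by comparing spectral radii within the few extremal families. Throughout, assume that $G$ has $n>8$ vertices, $\delta(G)\ge 2$, $\lambda_1(G)\ge n-3$, and $G$ is not Hamiltonian. We may take $G$ connected: otherwise the component carrying the spectral radius has at most $n-3$ vertices (since $\delta\ge 2$ forces every other component to have at least $3$ vertices), so its spectral radius is at most $n-4$.

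\textbf{Step 1 (edge count).} For a connected graph with $m$ edges and no isolated vertices, Hong's inequality gives $\lambda_1(G)\le\sqrt{2m-n+1}$. Combined with $\lambda_1(G)\ge n-3$ this yields
\[
m\ \ge\ \frac{(n-3)^2+n-1}{2}=\binom{n-2}{2}+1 .
\]

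\textbf{Step 2 (structure).} This bound falls just short of Erdős's extremal number $\binom{n-2}{2}+4$ for non-Hamiltonian graphs with $\delta\ge 2$, so Erdős's theorem alone does not suffice. Instead I would invoke the stability version (Füredi--Kostochka--Luo, or the form used by Li and Ning). It states that a non-Hamiltonian graph with $\delta\ge k$ and
\[
m>\max\Bigl\{\binom{n-k-1}{2}+(k+1)^2,\ \binom{\lceil (n+1)/2\rceil}{2}+\lfloor (n-1)/2\rfloor^2\Bigr\}
\]
is a subgraph of $L_n^k=K_1\vee(K_{n-k-1}\cup K_k)$ or of $H_n^k=K_k\vee(K_{n-2k}\cup I_k)$. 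With $k=2$, the first term is $\binom{n-3}{2}+9$, which is below $\binom{n-2}{2}+1$ precisely when $n\ge 11$. The second term is also dominated for $n\ge 11$. Hence, for $n\ge 11$, $G$ is a spanning subgraph of $K_1\vee(K_{n-3}\cup K_2)$ or of $K_2\vee(K_{n-4}\cup I_2)$.

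\textbf{Step 3 (spectral comparison).} Both $K_1\vee(K_{n-3}\cup K_2)$ and $K_2\vee(K_{n-4}\cup I_2)$ have equitable partitions with $3\times3$ quotient matrices. By Lemma~\ref{lemma:1}, their spectral radii are the largest roots of the corresponding cubics, and evaluating these cubics at $n-3$ shows that both spectral radii are at least $n-3$. I will check that the value is exactly attained or exceeded and that proper subgraphs fall below it. For the latter: any proper spanning subgraph with $\delta\ge2$ misses at least one edge, and by strict Perron--Frobenius monotonicity it loses a definite amount. To show that it drops below $n-3$, I would bound the Rayleigh quotient using the explicit Perron vector of the host graph. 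It suffices to handle maximal proper subgraphs, that is, one deleted edge inside the $K_{n-3}$ or $K_{n-4}$ part, or one deleted edge incident to the cut vertices. Deleting an edge at a degree-$2$ vertex is not allowed, since it would violate $\delta\ge 2$.

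\textbf{Main obstacle.} I expect the main obstacle to be the small cases $n\in\{9,10\}$, where the stability threshold does not apply and genuinely new exceptions ($I_5\vee K_4$ and $K_3\vee(K_{1,4}\cup K_1)$ at $n=9$) appear. For these, I would first use the Bondy--Chvátal closure: the closure of $G$ is also non-Hamiltonian and has larger $\lambda_1$. I would then apply Chvátal's degree-sequence condition to pin down a short list of degree sequences, namely those of the maximal non-Hamiltonian graphs $K_s\vee(K_{n-2s}\cup I_s)$ and $K_1\vee(K_{n-s-1}\cup K_s)$ for small $s$. Finally, I would compute the spectral radii of these few graphs and of their relevant subgraphs directly via quotient matrices, again using Lemma~\ref{lemma:1}.
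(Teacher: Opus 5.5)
The paper does not prove this lemma (it is quoted from Benediktovich), so your outline has to stand on its own. As written, it has a genuine gap: it does not cover every $n>8$.

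\textbf{The stability step does not reach $n=11$ or $n=13$.} Write $h(n,d)=\binom{n-d}{2}+d^2$. The F\"uredi--Kostochka--Luo hypothesis for $d=2$ is $m>\max\{h(n,3),h(n,\lfloor (n-1)/2\rfloor)\}$. Your Step~1 gives $m\ge\binom{n-2}{2}+1$, plus one more edge because equality in Hong's bound forces $K_n$ or a star.
\begin{itemize}
\item At $n=11$ this yields $m\ge 38$, but $h(11,5)=40$.
\item At $n=13$ it yields $m\ge 57$, but $h(13,6)=57$ and strict inequality is required.
\item At $n=12$ it works only through that strictness, since $\binom{10}{2}+1=46=h(12,5)$.
\end{itemize}
So your claim that both terms are dominated for $n\ge 11$ is false. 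Even the first comparison is off by one: $h(n,3)<\binom{n-2}{2}+1$ holds iff $n\ge 12$. Step~2 therefore applies only for $n=12$ and $n\ge 14$.

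Replacing Hong's bound by Lemma~\ref{lemma:10} with $\delta\ge 2$ gives $m\ge\binom{n-2}{2}+2$. Together with its equality case, this rescues $n=13$, but not $n=11$. There, $K_5\vee I_6$ (40 edges) and $K_4\vee(K_{1,5}\cup K_1)$ (39 edges) are non-Hamiltonian, have $\delta\ge 2$, pass every edge bound you have, and are spanning subgraphs of neither $K_2\vee(K_7\cup I_2)$ nor $K_1\vee(K_8\cup K_2)$. Their indices happen to be below $8$, but nothing in your Steps~2--3 shows this. Moreover, $n=11$ and $n=13$ are not among the orders your small-case plan addresses.

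\textbf{Step~3 bounds the wrong direction.} A Rayleigh quotient built from the host's Perron vector $x$ only gives $\lambda_1(H-e)\ge\lambda_1(H)-2x_ix_j$, which is a lower bound. To prove $\lambda_1(H-e)<n-3$ you need an upper bound. Two options work:
\begin{itemize}
\item Use the quotient polynomial of each edge-deleted host, handled as in Lemma~\ref{lemma:3}. Those partitions remain equitable, and there are only a few edge types.
\item Exhibit a positive vector $y$ with $A(H-e)y<(n-3)y$ entrywise (Collatz--Wielandt).
\end{itemize}

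\textbf{The small-order plan does not work as stated.} Failure of Chv\'atal's condition at $k$ only says that the degree sequence is dominated by that of $K_k\vee(K_{n-2k}\cup I_k)$. This neither places $G$ (or its closure) inside one of your candidate hosts nor controls $\lambda_1$.

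For example, at $n=9$ take $K_3\vee(K_2\cup K_2\cup I_2)$.
\begin{itemize}
\item It has $23$ edges, so it meets your bound.
\item It is closed and non-Hamiltonian.
\item Its degree sequence $(3,3,4,4,4,4,8,8,8)$ is dominated by that of $K_4\vee I_5$, whose index exceeds $6$.
\item It is a spanning subgraph of no $K_s\vee(K_{9-2s}\cup I_s)$ and no $K_1\vee(K_{8-s}\cup K_s)$. It is $3$-connected, has independence number $4$, and has only two vertices of degree at most $3$.
\end{itemize}
Its index is about $5.65$, so it is harmless. But it shows that for $n\in\{9,10,11,13\}$ you need a genuine classification, or a direct spectral argument, covering all non-Hamiltonian graphs with $\delta\ge 2$ in the relevant edge window. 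That is exactly where the exceptions $I_5\vee K_4$ and $K_3\vee(K_{1,4}\cup K_1)$ arise, and the proposal does not supply it.
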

	
	Let $J$, $I$ and $O$ denote the all-ones matrix, the identity matrix and the all-zeros matrix, respectively.
	
	\begin{lemma}\label{lemma:3}
		Let $n > 8$ and \(\Gamma_1\) be the signed graph shown in Fig.~\ref{fig:gamma1}, and let \(K_2 \vee (K_{n-4} \cup I_2)\) and \(K_1 \vee (K_{n-3} \cup K_2)\) be the graphs shown in Fig.~\ref{fig:new_and_k1}. Then
		
		\begin{enumerate}
			\item[(i)] $\lambda_1(\Gamma_1) $ is the largest real root of 
			the equation $x^3 + (5-n)x^2 + (8-3n)x + 2n-8 =0 $. \label{lemma:3i}
			\item[(ii)] $\lambda_1(\Gamma_1) > n-\frac{5}{2} > \lambda_1(K_1 \vee (K_{n-3} \cup K_2))$ .\label{lemma:3ii}
			\item[(iii)] $\lambda_1(\Gamma_1) > n-\frac{5}{2} > \lambda_1(K_2 \vee (K_{n-4} \cup I_2))$ .\label{lemma:3iii}
		\end{enumerate}
		
	\end{lemma}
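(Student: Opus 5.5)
For part (i) we apply Lemma~\ref{lemma:1} to the vertex partition $\{v_3\}\cup\{v_1,v_2\}\cup R$, where $R=\{v_4,\dots,v_n\}$ and $|R|=n-3$.

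\textbf{Equitable partition.} In $A(\Gamma_1)$ the row sums are constant on each block:
\begin{itemize}
\item $v_3$ sends weight $2$ to $\{v_1,v_2\}$.
\item Each of $v_1,v_2$ sends $+1$ to $v_3$, $-1$ to the other vertex of the pair, and $n-3$ to $R$.
\item Each vertex of $R$ sends $2$ to $\{v_1,v_2\}$ and $n-4$ inside $R$.
\end{itemize}
So the quotient matrix is
\[
Q=\begin{bmatrix}0&2&0\\ 1&-1&n-3\\ 0&2&n-4\end{bmatrix}.
\]
Expanding $\det(xI-Q)$ along the first row gives $x\bigl[(x+1)(x-n+4)-2(n-3)\bigr]-2(x-n+4)$. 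This simplifies to $f(x)=x^3+(5-n)x^2+(8-3n)x+2n-8$.

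\textbf{Remaining eigenvalues.} We must check that $\lambda_1(\Gamma_1)$ is the largest root of $f$ and not some other eigenvalue. We find the other eigenvalues directly, which is the content of Lemma~\ref{lemma:1}(ii).
\begin{itemize}
\item The vector $e_{v_1}-e_{v_2}$ is an eigenvector with eigenvalue $1$, since the negative edge $v_1v_2$ contributes $(-1)(-1)=1$.
\item Vectors supported on $R$ with coordinate sum zero are eigenvectors with eigenvalue $-1$, with multiplicity $n-4$.
\end{itemize}
Together with the three roots of $f$ this accounts for all $n$ eigenvalues. Since $f$ will be shown to have a root larger than $n-\tfrac52>1$, the index is the largest root of $f$.

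\textbf{Lower bound for $\Gamma_1$.} For the left inequalities in (ii) and (iii), $f$ is monic, so it suffices to show $f(n-\tfrac52)<0$. Substituting $x=n-\tfrac52$, we expect the cubic and quadratic terms in $n$ to cancel, leaving an expression whose leading term is negative (roughly a negative multiple of $n$ plus a constant). We then verify that it is negative for all $n>8$. This direct substitution is the only genuine computation and the main place an error could occur. If the sign turns out to be delicate, the fallback is to evaluate $f$ at $n-\tfrac52+\varepsilon$ for a suitable small $\varepsilon$ instead.

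\textbf{Upper bounds for the unsigned graphs.} Both graphs in Fig.~\ref{fig:new_and_k1} are connected, so we can use Hong's bound $\lambda_1(G)\le\sqrt{2m-n+1}$ for connected graphs with $m$ edges. This avoids computing further characteristic polynomials.
\begin{itemize}
\item For $K_1\vee(K_{n-3}\cup K_2)$ we have $m=\binom{n-2}{2}+3$, so $2m-n+1=n^2-6n+13$. This is less than $(n-\tfrac52)^2=n^2-5n+\tfrac{25}{4}$ exactly when $n>\tfrac{27}{4}$.
\item For $K_2\vee(K_{n-4}\cup I_2)$ we have $m=\binom{n-2}{2}+4$, so $2m-n+1=n^2-6n+15$. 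This is less than $(n-\tfrac52)^2$ exactly when $n>\tfrac{35}{4}$, which holds since $n\ge 9$.
\end{itemize}
These give the strict right-hand inequalities in (ii) and (iii). The second case is exactly where the hypothesis $n>8$ is needed.
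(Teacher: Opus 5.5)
Your proof is correct. Part (i) and the lower bound follow the paper, but you prove the two right-hand inequalities by a genuinely different route.

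In (i) the paper uses the finer partition $\{v_1\},\{v_2\},\{v_3\},R$, whose $4\times 4$ quotient has characteristic polynomial $(x-1)f(x)$. Merging $v_1,v_2$ gives you $f$ directly, and your vector $e_{v_1}-e_{v_2}$ is exactly where the paper's extra factor $x-1$ comes from. So this is the same argument, with a more explicit accounting of the other $n-3$ eigenvalues.

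For the lower bound, you should actually carry out the substitution. It gives $f(n-\tfrac52)=-\tfrac{(2n-9)(2n-11)}{8}$, so only the cubic term cancels: the result is a negative quadratic in $n$, not linear as you guessed. It is negative for every $n>\tfrac{11}{2}$, and no fallback is needed.

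For the upper bounds, the paper computes the quotient polynomials $P_{Q_2},P_{Q_3}$ of the two unsigned graphs and proves them positive on $[n-\tfrac52,\infty)$ by a chain of derivative estimates. You need only the edge counts $\binom{n-2}{2}+3$ and $\binom{n-2}{2}+4$ together with Hong's bound $\lambda_1(G)\le\sqrt{2m-n+1}$, and your arithmetic ($n>\tfrac{27}{4}$ and $n>\tfrac{35}{4}$) is right. Your route is much shorter and avoids all polynomial analysis.

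It costs two things. First, it imports an outside inequality, though this is harmless: it follows in two lines from $\lambda_1(G)^2=\lambda_1(A(G)^2)\le\max_i\sum_{j\sim i}d_j\le 2m-n+1$. It is also implied by the paper's Lemma~\ref{lemma:10}, whose bound is non-increasing in $\delta$ and equals $\sqrt{2m-n+1}$ at $\delta=1$. Second, it loses sharpness: Hong's estimate only barely clears $n-\tfrac52$ for $K_2\vee(K_{n-4}\cup I_2)$ (at $n=9$ it gives $\sqrt{42}<6.5$). So your need for $n\ge 9$ there is an artifact of the bound. The paper's exact polynomials, or Lemma~\ref{lemma:10} applied with the true minimum degree $\delta=2$ (which already works for $n\ge 8$), are not tied to $n\ge 9$.
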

	
	\begin{proof}
		We first prove part (i). 
		By choosing a suitable equitable partition of $V(\Gamma_1)$, the adjacency matrix of $\Gamma_1$ can be written as
		\[
		A(\Gamma_1) =
		\begin{bmatrix}
			0 & -1 & 1 & J \\
			-1 & 0 & 1 & J \\
			1 & 1 & 0 & \bm{0} \\
			J & J & \bm{0} & J-I
		\end{bmatrix}
		\begin{array}{l}
			v_1 \\
			v_2 \\
			v_3 \\
			V(\Gamma_1)\setminus\{v_1,v_2,v_3\}
		\end{array}
		\]
		and its equitable quotient matrix is
		\[
		Q_1 =
		\begin{bmatrix}
			0 & -1 & 1 & n-3 \\
			-1 & 0 & 1 & n-3 \\
			1 & 1 & 0 & 0 \\
			1 & 1 & 0 & n-4
		\end{bmatrix}.
		\]
		By calculation, the characteristic polynomial of $Q_1$ is
		\[
		P_{Q_1}(x) = (x-1)\bigl(x^3 + (5-n)x^2 + (8-3n)x + 2n-8\bigr).
		\]
		\noindent It is easy to verify that $P_{Q_1}(n-\frac{5}{2})= -\frac{(2n - 11)(2n - 9)(2n - 7)}{16}  < 0$ and $P_{Q_1}(n-2)=4(n - 3)^2 > 0$.
		By Lemma~\ref{lemma:1}, all eigenvalues of $Q_1$ belong to the spectrum of $A(\Gamma_1)$ and by subtracting a suitable scalar multiple of the all-ones block from each subblock of $A(\Gamma_1)$, we derive the simplified block matrix
		\[
		A_1 =
		\begin{bmatrix}
			0 & 0 & 0 & \bm{0} \\    
			0 & 0 & 0 & \bm{0} \\    
			0 & 0 & 0 & \bm{0} \\  
			\bm{0} & \bm{0} & \bm{0} & -I_{n-3}
		\end{bmatrix}.
		\]
		
		\noindent  Since $\lambda_1(\Gamma_1) > 0 $, $\operatorname{Spec}(A_1) = \{-1^{[n-3]}, 0^{[3]}\}$ and $\lambda_1(A(\Gamma_1)) = \lambda_1(Q_1)$. Hence $\lambda_1(\Gamma_1) $ is the largest real root of the equation $x^3 + (5-n)x^2 + (8-3n)x + 2n-8=0 $. We get $\lambda_1(Q_1)> n-\frac{5}{2}$.
		
		We now prove part (ii). Note that
		\[
		A(K_1 \vee (K_{n-3} \cup K_2)) =
		\begin{bmatrix}
			0 & 1 & 1 & \bm{0} \\
			1 & 0 & 1 & \bm{0} \\
			1 & 1 & 0 & J \\
			\bm{0} & \bm{0} & J & J-I
		\end{bmatrix}
		\begin{array}{l}
			v_1 \\
			v_2 \\
			v_3 \\
			V(K_1 \vee (K_{n-3} \cup K_2))\setminus\{v_1,v_2,v_3\}
		\end{array}
		\]
		and its equitable quotient matrix is
		\[
		Q_2 =
		\begin{bmatrix}
			0 & 1 & 1 & 0 \\
			1 & 0 & 1 & 0 \\
			1 & 1 & 0 & n-3 \\
			0 & 0 & 1 & n-4
		\end{bmatrix}.
		\]
		By calculation, the characteristic polynomial of $Q_2$ is
		\[
		P_{Q_2}(x) = x^4 + (4-n)x^3 -nx^2 + (3n-14)x + 3n-11.
		\]
		
		\noindent It is easy to verify that $P_{Q_2}(n-\frac{5}{2}) =\frac{(2n - 3)(4n^2 - 20n - 3)}{16}> 0$ and $P_{Q_2}(n-3)=-2(n - 2) < 0$. By Lemma~\ref{lemma:1},  all eigenvalues of $Q_2$ belong to the spectrum of $A(K_1 \vee (K_{n-3} \cup K_2))$ and by subtracting a suitable scalar multiple of the all-ones block from each subblock of $A(K_1 \vee (K_{n-3} \cup K_2))$, we derive the simplified block matrix
		\[
		A_2 =
		\begin{bmatrix}
			0 & 0 & 0 & \bm{0} \\    
			0 & 0 & 0 & \bm{0} \\    
			0 & 0 & 0 & \bm{0} \\  
			\bm{0} & \bm{0} & \bm{0} & -I_{n-3}
		\end{bmatrix}.
		\]
		
		\noindent  Since $\lambda_1(K_1 \vee (K_{n-3} \cup K_2)) > 0 $, $\operatorname{Spec}(A_2) = \{-1^{[n-3]}, 0^{[3]}\}$ and $\lambda_1(A(K_1 \vee (K_{n-3} \cup K_2))) = \lambda_1(Q_2)$. Next, we show that \(P'_{Q_2}(x)>0\) for all \(x\ge n-\frac52\). The derivatives are
		\begin{align*}
			P'_{Q_2}(x)&=4x^3+(12-3n)x^2-2nx+3n-14,\\
			P''_{Q_2}(x)&=12x^2+(24-6n)x-2n,\\
			P'''_{Q_2}(x)&=24x+24-6n.
		\end{align*}
		For \(x\ge n-\frac52\) and $n > 8$,
		\[
		P'''_{Q_2}(x)\ge 24\left(n-\frac52\right)+24-6n=18(n-2)>0.
		\]
		Thus \(P''_{Q_2}(x)\) is strictly increasing for all \(x\ge n-\frac52\). Moreover,
		\[
		P''_{Q_2}\left(n-\frac52\right)=6n^2-23n+15>0\quad (n > 8),
		\]
		so \(P''_{Q_2}(x)>0\) for all \(x\ge n-\frac52\). Hence \(P'_{Q_2}(x)\) is strictly increasing for all \(x\ge n-\frac52\). Since
		\[
		P'_{Q_2}\left(n-\frac52\right)=\left(n-\frac52\right)^3+\frac52\left(n-\frac52\right)^2-2\left(n-\frac52\right)-\frac{13}{2}>0,
		\]
		we conclude that \(P'_{Q_2}(x)>0\) for all \(x\ge n-\frac52\). Therefore, \(P_{Q_2}(x)\) is strictly increasing for all \(x\ge n-\frac52\), and since \(P_{Q_2}\left(n-\frac52\right)>0\), it follows that
		\[
		P_{Q_2}(x)>0\quad\text{for all }x\ge n-\frac52.
		\] Thus, $\lambda_1(K_1 \vee (K_{n-3} \cup K_2)) = \lambda_1(Q_2)<n-\frac{5}{2}$. 
		 Therefore, $\lambda_1(\Gamma_1) > n-\frac{5}{2} > \lambda_1(K_1 \vee (K_{n-3} \cup K_2))$.
		 
		Finally, we prove part (iii). Note that
		\[
		A(K_2 \vee (K_{n-4} \cup I_2)) =
		\begin{bmatrix}
		
			\bm{0} & J & \bm{0}  \\
			J & J-I & J  \\
			\bm{0} & J & J-I
		\end{bmatrix}
		\begin{array}{l}
			\{v_3,v_4\} \\
			\{v_1,v_2\} \\
			V(K_2 \vee (K_{n-4} \cup I_2))\setminus\{v_1,v_2,v_3,v_4\}
		\end{array}
		\]
		and its equitable quotient matrix is
		\[
		Q_3 =
		\begin{bmatrix}
			0 & 2  & 0 \\
			2 & 1 & n-4  \\
			0 & 2 & n-5 \\
		\end{bmatrix}.
		\]
		By calculation, the characteristic polynomial of $Q_3$ is
		\[
		P_{Q_3}(x) = x^3 + (4-n)x^2 + (-n-1)x + 4n - 20.
		\]
		
		\noindent It is easy to verify that $P_{Q_3}(n-\frac{5}{2})= \frac{(2n - 13)(2n + 5)}{8} > 0$ and $P_{Q_3}(n-3) = -8 < 0$. By Lemma~\ref{lemma:1},  all eigenvalues of $Q_3$ belong to the spectrum of $A(K_2 \vee (K_{n-4} \cup I_2))$ and by subtracting a suitable scalar multiple of the all-ones block from each subblock of $A(K_2 \vee (K_{n-4} \cup I_2))$, we derive the simplified block matrix
		\[
		A_3 =
		\begin{bmatrix}
			\bm{0} & \bm{0} & \bm{0} \\    
			\bm{0} & -I_2 & \bm{0} \\      
			\bm{0} & \bm{0} & -I_{n-4}
		\end{bmatrix}.
		\]
		\noindent  Since $\lambda_1(K_2 \vee (K_{n-4} \cup I_2)) > 0 $, $\operatorname{Spec}(A_3) = \{-1^{[n-2]}, 0^{[2]}\}$ and $\lambda_1(A(K_2 \vee (K_{n-4} \cup I_2))) = \lambda_1(Q_3)$. Next, we show that \(P'_{Q_3}(x)>0\) for all \(x\ge n-\frac52\). The derivative is
		\[
		P'_{Q_3}(x) = 3x^2 + 2(4-n)x + (-n-1) = 3x^2 + (8-2n)x - n - 1.
		\]
		This is a quadratic function with a positive leading coefficient, so it opens upward. Its axis of symmetry is
		\[
		x_0 = \frac{-(8-2n)}{2\cdot 3} = \frac{n-4}{3}.
		\]
		Since $n > 8$, we have
		\[
		\frac{n-4}{3} \le n - \frac52.
		\]
		Therefore the axis of symmetry lies to the left of (or at) the left endpoint of the interval $\left[n-\frac52, \infty\right)$. Hence $P'_{Q_3}(x)$ is increasing on this interval, and its minimum value there occurs at $x = n - \frac52$.
		
		\noindent Evaluating the derivative at $x = n - \frac52$, we get
		\begin{align*}
			P'_{Q_3}\left(n-\frac52\right)
			&= 3\left(n-\frac52\right)^2 + (8-2n)\left(n-\frac52\right) - n - 1 \\
			&= n^2 - 3n - \frac94 \\
			&= (n-4)(n+1) + \frac74.
		\end{align*}
		For $n > 8$, we have $(n-4)(n+1) \ge 4 \cdot 9 = 36 > 0$, so
		\[
		P'_{Q_3}\left(n-\frac52\right) > 0.
		\]
		Since $P'_{Q_3}(x)$ is increasing on $\left[n-\frac52, \infty\right)$ and its value at the left endpoint is positive, it follows that
		\[
		P'_{Q_3}(x) \ge P'_{Q_3}\left(n-\frac52\right) > 0
		\]
		for all $x \ge n - \frac52$. Therefore, \(P_{Q_3}(x)\) is strictly increasing for all \(x\ge n-\frac52\), and since \(P_{Q_3}\left(n-\frac52\right)>0\), it follows that
		\[
		P_{Q_3}(x)>0\quad\text{for all }x\ge n-\frac52.
		\] Thus, $\lambda_1(K_2 \vee (K_{n-4} \cup I_2)) = \lambda_1(Q_3)<n-\frac{5}{2}$. 
		Therefore, $\lambda_1(\Gamma_1) > n-\frac{5}{2} > \lambda_1(K_2 \vee (K_{n-4} \cup I_2))$.
	\end{proof}

	\begin{lemma}[\cite{Stanic2018}]\label{lemma:4}
		Let $\Gamma$ be a signed graph. Then there exists a signed graph $\Gamma'$ switching 
		equivalent to $\Gamma$ such that $A(\Gamma')$ has a non-negative eigenvector 
		corresponding to $\lambda_1(\Gamma')$.
	\end{lemma}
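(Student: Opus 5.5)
The plan is to realize switching as a diagonal similarity and then choose the switching set according to the signs of a fixed eigenvector. Let $\mathbf{x}=(x_1,\dots,x_n)^{T}$ be any real eigenvector of $A(\Gamma)$ for $\lambda_1(\Gamma)$; one exists because $A(\Gamma)$ is real symmetric. Put $U=\{v_i : x_i<0\}$. Define the diagonal signature matrix $D=\operatorname{diag}(s_1,\dots,s_n)$ with $s_i=-1$ if $v_i\in U$ and $s_i=1$ otherwise. Then $D^{-1}=D$.

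The first step is to check that $A'=DA(\Gamma)D$ is the adjacency matrix of the signed graph $\Gamma'$ obtained from $\Gamma$ by switching at $U$. The $(i,j)$ entry of $A'$ is $s_is_ja^{\sigma}_{ij}$. The factor $s_is_j$ equals $-1$ exactly when one of $v_i,v_j$ lies in $U$ and the other does not. So $A'$ reverses precisely the signs of the edges between $U$ and $V(G)\setminus U$ and leaves all other entries, including zeros, unchanged. Hence $\Gamma'$ is switching equivalent to $\Gamma$, has the same underlying graph, and $A'$ is similar to $A(\Gamma)$. In particular $\lambda_1(\Gamma')=\lambda_1(\Gamma)$.

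The second step is the eigenvector computation. Set $\mathbf{y}=D\mathbf{x}$. Then
\[
A'\mathbf{y}=DA(\Gamma)DD\mathbf{x}=DA(\Gamma)\mathbf{x}=\lambda_1(\Gamma)D\mathbf{x}=\lambda_1(\Gamma')\mathbf{y}.
\]
By the choice of $U$, $y_i=s_ix_i=|x_i|\ge 0$ for every $i$, and $\mathbf{y}\neq \mathbf{0}$ since $\mathbf{x}\neq\mathbf{0}$. Thus $\mathbf{y}$ is a non-negative eigenvector of $A(\Gamma')$ for $\lambda_1(\Gamma')$, which is the claim.

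There is no substantive obstacle here. The only point needing care is the first step: the conjugation $D A D$ must coincide exactly with the combinatorial switching operation as defined in the introduction. This is the entrywise check $s_is_j=-1$ if and only if the edge crosses $U$. Once that is in place the rest is a one-line similarity argument. Vertices with $x_i=0$ may be placed on either side of $U$ without affecting the conclusion.
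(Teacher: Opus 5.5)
Your proof is correct: conjugating by the signature matrix $D$ built from the signs of a $\lambda_1$-eigenvector is exactly switching at $U$, and $D\mathbf{x}$, the entrywise absolute value of $\mathbf{x}$, is then a non-negative eigenvector of $A(\Gamma')$ for $\lambda_1(\Gamma')=\lambda_1(\Gamma)$. The paper gives no proof of this lemma and only cites Stani\'c; your signature-matrix similarity argument is the standard one for it.
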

	
	\begin{lemma}[\cite{Zaslavsky1982}]\label{lemma:5}
		Two signed graphs with the same underlying graph are switching equivalent if and 
		only if they have the same set of positive cycles.
	\end{lemma}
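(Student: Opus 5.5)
We would prove the two directions separately; the only real tool is the observation that switching multiplies the sign of each edge $uv$ by $s(u)s(v)$, where $s(v)=-1$ if $v\in U$ and $s(v)=+1$ otherwise.

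\emph{Necessity.} Suppose $\sigma'$ is obtained from $\sigma$ by switching at $U$. For a cycle $C=u_1u_2\cdots u_ku_1$, the new sign is
\[
\prod_{i}\sigma'(u_iu_{i+1})=\prod_{i}\sigma(u_iu_{i+1})\,s(u_i)s(u_{i+1})=\prod_i\sigma(u_iu_{i+1}),
\]
because each vertex of $C$ occurs exactly twice in the product of the $s$-factors. Equivalently, $C$ crosses the cut $(U,V\setminus U)$ an even number of times. Hence a single switching preserves the sign of every cycle, and by induction so does any finite sequence of switchings. Therefore switching equivalent signed graphs have the same set of positive cycles.

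\emph{Sufficiency.} Assume $(G,\sigma)$ and $(G,\sigma')$ have the same positive cycles, and define $\tau=\sigma\sigma'$ edgewise. For every cycle $C$, its sign under $\tau$ is the product of its signs under $\sigma$ and under $\sigma'$. By hypothesis these two signs agree, so every cycle is positive under $\tau$, and $(G,\tau)$ is balanced. The key step is to show that a balanced signed graph can be switched to the all-positive signature.

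\begin{enumerate}
\item Work in each connected component separately, fix a root $r$ and a spanning tree $T$.
\item Put $s(v)$ equal to the product of $\tau$ along the unique $r$--$v$ path in $T$, and let $U=\{v: s(v)=-1\}$.
\item Switching $\tau$ at $U$ multiplies $\tau(uv)$ by $s(u)s(v)$. For a tree edge $uv$ with $v$ the child, $s(v)=s(u)\tau(uv)$, so the edge becomes positive.
\item For a non-tree edge $uv$, the edge together with the tree path between $u$ and $v$ forms the fundamental cycle of $uv$. This cycle is positive under $\tau$, and by necessity it stays positive after switching. All its tree edges are now positive, so $uv$ must be positive too.
\end{enumerate}

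Thus switching at $U$ turns $\tau$ into the all-positive signature, i.e.\ $s(u)s(v)\sigma(uv)\sigma'(uv)=1$ for every edge. Since all factors are $\pm1$, this gives $s(u)s(v)\sigma(uv)=\sigma'(uv)$, which says exactly that switching $\sigma$ at $U$ yields $\sigma'$.

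The only point needing care is step 4, the non-tree edges, which is handled by the fundamental-cycle argument above. Everything else is bookkeeping.
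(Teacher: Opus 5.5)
Your proof is correct and complete. Necessity holds because each vertex of a cycle contributes its switching factor twice. Sufficiency holds because $\tau=\sigma\sigma'$ is balanced, and normalizing $\tau$ along a spanning forest, with the fundamental-cycle argument for non-tree edges, is sound.

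The paper gives no proof of Lemma~\ref{lemma:5}; it only cites Zaslavsky. Your argument is the standard spanning-forest proof of that result. It also shows that a single switching, at the union of the per-component sets $U$, already suffices.
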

	
	\begin{lemma}[\cite{Xiong2024}]\label{lemma:6}
		Let $\Gamma = (G,\sigma)$ be a signed graph with a non-negative unit eigenvector 
		$X = (x_1, x_2, \dots, x_n)^{\top}$ corresponding to the largest 
		eigenvalue $\lambda_1(\Gamma)$. If we perform one of the
		following perturbations in $\Gamma $.
		\begin{enumerate}[label=(\roman*)]
			\item  Adding some positive edges\label{lemma:6i},
			\item Removing some negative edges, \label{lemma:6ii}
			\item Reversing the signs of some negative edges\label{lemma:6iii},
		\end{enumerate}
		resulting in a new signed graph $\Gamma'$, then $\lambda_1(\Gamma') \geq \lambda_1(\Gamma)$. 
		The equality holds only if the entries of $X$ corresponding to the 
		endpoints of these edges are all zeros.
	\end{lemma}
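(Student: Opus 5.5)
The natural route is the Rayleigh quotient, using the non-negativity of $X$ to control the sign of every change. Write $A = A(\Gamma)$ and $A' = A(\Gamma')$, and let $D = A' - A$. For each perturbed edge $v_iv_j$ the entries $D_{ij} = D_{ji}$ are determined as follows: adding a positive edge gives $0 \to 1$, so $D_{ij}=1$; removing a negative edge gives $-1 \to 0$, so $D_{ij}=1$; reversing a negative edge gives $-1 \to 1$, so $D_{ij}=2$. All other entries of $D$ are $0$. Hence $D$ is a symmetric non-negative matrix, and its nonzero entries sit exactly at the endpoints of the perturbed edges.

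\textbf{Inequality.} Since $X$ is a unit vector, the Rayleigh principle for the real symmetric matrix $A'$ gives
\[
\lambda_1(\Gamma') \ge X^{\top}A'X = X^{\top}AX + X^{\top}DX = \lambda_1(\Gamma) + 2\sum_{v_iv_j \text{ perturbed}} D_{ij}\,x_ix_j .
\]
Because $x_i \ge 0$ and $D_{ij}>0$, the sum is non-negative. This already yields $\lambda_1(\Gamma') \ge \lambda_1(\Gamma)$.

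\textbf{Equality.} This is the only step needing care, since we must force \emph{both} endpoints to vanish rather than just one. Suppose $\lambda_1(\Gamma') = \lambda_1(\Gamma) =: \lambda$. Then $X^{\top}A'X = \lambda_1(A')$, so $X$ attains the maximum of the Rayleigh quotient of $A'$. By the standard characterization of that equality, $X$ is an eigenvector of $A'$ for $\lambda$, that is, $A'X = \lambda X$. Combined with $AX = \lambda X$, this gives $DX = 0$.

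Now take any perturbed edge $v_iv_j$ and read off row $i$ of $DX=0$:
\[
0 = \sum_{k} D_{ik}x_k \ge D_{ij}x_j \ge 0 .
\]
Since $D_{ij}>0$, this forces $x_j = 0$. Reading row $j$ in the same way forces $x_i = 0$. Thus every endpoint of every perturbed edge has zero entry in $X$, which is the stated necessary condition for equality.

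The only real subtlety is this last step, namely passing from equality of the indices to $DX=0$. The weaker observation that the sum $\sum D_{ij}x_ix_j$ vanishes would only give $x_ix_j=0$, i.e.\ one endpoint zero per edge, so the eigenvector equation is what upgrades this to both endpoints.
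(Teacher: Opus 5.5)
Your proof is correct and takes essentially the same route as the paper, which cites this lemma from Xiong--Hou and repeats the argument inline in its claims: the Rayleigh bound $\lambda_1(\Gamma')\ge X^{\top}A(\Gamma')X=\lambda_1(\Gamma)+X^{\top}DX$ with $D\ge 0$, and in the equality case $X$ is an eigenvector of $A(\Gamma')$. Your row-by-row reading of $DX=0$ is exactly the step that upgrades $x_ix_j=0$ to $x_i=x_j=0$.
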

	
	\begin{lemma}[\cite{Stanic2019Ars}]\label{lemma:7}
		Let $\Gamma = (G, \sigma)$ be a signed graph. Then $\lambda_1(\Gamma) \leq \lambda_1(G)$. If $G$ is connected, then equality holds if and only if $\Gamma$ is balanced.
	\end{lemma}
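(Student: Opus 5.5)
We would prove Lemma~\ref{lemma:7} by comparing the Rayleigh quotients of $A(\Gamma)$ and $A(G)$ entrywise. The only ingredients needed are the variational characterization of the largest eigenvalue of a real symmetric matrix and the Perron--Frobenius theorem for the non-negative irreducible matrix $A(G)$.

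\textbf{Inequality.} Let $X=(x_1,\dots,x_n)^{\top}$ be a unit eigenvector of $A(\Gamma)$ for $\lambda_1(\Gamma)$, and put $|X|=(|x_1|,\dots,|x_n|)^{\top}$, also a unit vector. Then
\[
\lambda_1(\Gamma)=X^{\top}A(\Gamma)X=2\sum_{v_iv_j\in E(G)}\sigma(v_iv_j)x_ix_j\le 2\sum_{v_iv_j\in E(G)}|x_i||x_j|=|X|^{\top}A(G)|X|\le \lambda_1(G),
\]
where the last step is the Rayleigh principle for $A(G)$.

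\textbf{Balanced implies equality.} If $\Gamma$ is balanced, we use Lemma~\ref{lemma:5}: $\Gamma$ and $(G,+)$ have the same underlying graph and the same (full) set of positive cycles, so they are switching equivalent. Switching at a set $U$ corresponds to the similarity $A\mapsto DAD$, with $D$ diagonal having $-1$ on $U$ and $+1$ elsewhere. Hence $A(\Gamma)$ is similar to $A(G)$, and in particular $\lambda_1(\Gamma)=\lambda_1(G)$. This direction does not use connectivity.

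\textbf{Equality implies balanced ($G$ connected).} Suppose $\lambda_1(\Gamma)=\lambda_1(G)$. Then both inequalities in the display are equalities.
\begin{enumerate}
\item From the second equality, $|X|$ attains the maximum of the Rayleigh quotient of $A(G)$, so $|X|$ is an eigenvector of $A(G)$ for $\lambda_1(G)$.
\item Since $G$ is connected, $A(G)$ is irreducible. By Perron--Frobenius, the $\lambda_1(G)$-eigenspace is spanned by a strictly positive vector, so every $|x_i|>0$.
\item From the first equality, each term satisfies $\sigma(v_iv_j)x_ix_j=|x_i||x_j|$, with $x_ix_j\neq 0$. Hence $\sigma(v_iv_j)=s_is_j$ for every edge, where $s_i=\operatorname{sgn}(x_i)\in\{\pm1\}$.
\end{enumerate}
For any cycle $v_{i_1}v_{i_2}\cdots v_{i_k}v_{i_1}$, its sign is then $\prod_t s_{i_t}s_{i_{t+1}}=\prod_t s_{i_t}^2=1$. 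So every cycle is positive and $\Gamma$ is balanced. Equivalently, switching at $U=\{v_i: x_i<0\}$ turns every edge positive.

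\textbf{Main obstacle.} The delicate step is the equality case, specifically step 2: we must have $x_i\neq 0$ for every vertex, since otherwise step 3 gives no information about the sign of an edge at a vertex with $x_i=0$. This is exactly where connectivity of $G$ is used, through Perron--Frobenius applied to $|X|$.
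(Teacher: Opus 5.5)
The paper does not prove this lemma itself (it is quoted from Stani\'c), so there is no internal proof to compare against. Your argument is correct and complete, and it is the standard proof of this fact: the comparison $X^{\top}A(\Gamma)X\le |X|^{\top}A(G)|X|$ gives the inequality, the similarity $A\mapsto DAD$ gives balanced $\Rightarrow$ equality, and Perron--Frobenius positivity of $|X|$ plus termwise equality $\sigma(v_iv_j)=s_is_j$ gives the converse.

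One small simplification: step 2 does not need the full Perron--Frobenius theorem. If $|X|$ is a nonnegative $\lambda_1(G)$-eigenvector with $|x_i|=0$, the eigen-equation gives $\sum_{j\sim i}|x_j|=0$. Connectivity then propagates zeros to every vertex, contradicting $\|X\|=1$.
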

	
	\begin{lemma}[\cite{Li2023}]\label{lemma:8}
		If $(K_n, \sigma)$ is unbalanced, then the shortest negative cycle must be $(C_3, \sigma|_{C_3})$.
	\end{lemma}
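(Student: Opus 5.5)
The plan is a minimal counterexample argument that uses only the fact that the underlying graph is complete, so every chord is available. Since $(K_n,\sigma)$ is unbalanced, by definition it contains at least one negative cycle. Among all negative cycles I choose one of minimum length, say $C = u_1u_2\cdots u_ku_1$ with $k \ge 3$. The statement then reduces to showing $k = 3$. I argue by contradiction and assume $k \ge 4$.

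The key step is to split $C$ along a chord. Because the underlying graph is $K_n$, the edge $u_1u_3$ is present, and it is a chord of $C$ since $k \ge 4$. It divides $C$ into two shorter cycles:
\begin{itemize}
\item the triangle $T = u_1u_2u_3u_1$;
\item the cycle $C' = u_1u_3u_4\cdots u_ku_1$, which has length $k-1 \ge 3$ and is a genuine cycle because $k \ge 4$.
\end{itemize}
The sign of a cycle is the product of the signs of its edges, and each edge of $C$ lies in exactly one of $T$ and $C'$. The chord $u_1u_3$ lies in both, so it contributes $\sigma(u_1u_3)^2 = 1$ to the product of their signs. Hence
\[
\sigma(T)\,\sigma(C') = \sigma(C) = -1,
\]
so at least one of $T$ and $C'$ is negative. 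Both have length strictly less than $k$, because $3 < k$ and $k-1 < k$. This contradicts the minimality of $C$, so $k = 3$, and the shortest negative cycle is a triangle $(C_3,\sigma|_{C_3})$.

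I do not expect a genuine obstacle here. The only points that need care are checking that $C'$ is a cycle, which is where $k \ge 4$ is used, and doing the sign bookkeeping for the shared chord correctly. Equivalently, one could induct on the length of a negative cycle and repeatedly cut off triangles with chords from a fixed vertex, but the minimal-counterexample formulation above is the cleanest.
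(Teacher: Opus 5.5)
Your proof is correct and complete. For a minimum-length negative cycle $C=u_1u_2\cdots u_ku_1$ with $k\ge 4$, the edge $u_1u_3$ exists because the underlying graph is complete, and it is a chord of $C$ because $u_3\neq u_k$. This chord splits $C$ into the triangle $u_1u_2u_3u_1$ and the $(k-1)$-cycle $u_1u_3u_4\cdots u_ku_1$, whose signs multiply to $\sigma(C)=-1$, so one of them is a shorter negative cycle, contradicting minimality.

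The paper gives no proof of this lemma; it only quotes it from \cite{Li2023}. Your chord-splitting argument is the standard self-contained proof, and nothing is missing.
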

	
	A graph is Hamilton-connected if every pair of vertices is joined by a Hamiltonian path.
	
	\begin{lemma}[\cite{ZhouWang201}]\label{lemma:9}
		Let \(G\) be a connected graph on \(n \ge 6\) vertices with minimum degree \(\delta(G) \ge 3\).
		If
		\[
		\lambda_1(G) \ge \sqrt{(n-3)^2 + 10},
		\]
		then \(G\) is Hamilton-connected.
	\end{lemma}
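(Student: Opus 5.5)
The plan is to reduce the spectral hypothesis to an edge-count condition and then invoke an Erdős-type edge theorem for Hamilton-connectedness under a minimum-degree assumption. The reduction uses Hong's inequality $\lambda_1(G)\le\sqrt{2e(G)-n+1}$, valid for connected graphs (indeed for any graph without isolated vertices, which $\delta(G)\ge 3$ guarantees). Combining it with $\lambda_1(G)\ge\sqrt{(n-3)^2+10}$ gives
\[
e(G)\ \ge\ \frac{(n-3)^2+10+n-1}{2}=\frac{n^2-5n+18}{2}=\binom{n-2}{2}+6 .
\]
The threshold $\sqrt{(n-3)^2+10}$ is evidently chosen so that this bound lands exactly on $\binom{n-2}{2}+6$, the edge threshold for Hamilton-connectedness when $\delta\ge 3$.

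The second step is the edge result itself. We use (or, if needed, reprove via Bondy--Chv\'atal closure) the statement: if $\delta(G)\ge 3$, $n\ge 6$ and $e(G)\ge\binom{n-2}{2}+6$, then $G$ is Hamilton-connected unless $G$ lies in a short explicit exceptional list. I would expect that list to contain a family like $K_3\vee(K_{n-5}\cup I_2)$, and possibly $K_4\vee I_{n-4}$-type graphs for small $n$. To prove this directly, I would pass to the Hamilton-connected closure, obtained by repeatedly joining nonadjacent $u,v$ with $d(u)+d(v)\ge n+1$. The closure is complete exactly when $G$ is Hamilton-connected, by the Bondy--Chv\'atal result for the parameter $n+1$. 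If the closure is not complete, the Chv\'atal-type degree sequence obstruction for Hamilton-connectedness gives some $k$ with $2\le k\le n/2$ and $d_{k-1}\le k$, $d_{n-k}\le n-k$. The degree condition $\delta\ge 3$ forces $k\ge 3$. Summing degrees then gives $e(G)\le\max\{\binom{n-3}{2}+12,\ \ldots\}$ for $k=3$, with the other values of $k$ handled by convexity in $k$. Comparing these bounds with $\binom{n-2}{2}+6$ pins down the extremal structures.

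The final step is to remove each exceptional graph spectrally. For each candidate, I would write an equitable quotient matrix, as in the proof of Lemma~\ref{lemma:3}, and show that its characteristic polynomial is positive and increasing on $[\sqrt{(n-3)^2+10},\infty)$. By Lemma~\ref{lemma:1}, this gives $\lambda_1<\sqrt{(n-3)^2+10}$. If Hong's bound leaves some slack, subgraphs of the exceptional graphs with edges deleted are covered as well, since deleting edges only lowers $\lambda_1$.

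I expect the main obstacle to be the second step: identifying the exact exceptional family at the edge threshold $\binom{n-2}{2}+6$, especially for small $n$ (say $6\le n\le 10$). There the degree-sequence case analysis does not collapse to the $k=3$ case, and extra sporadic graphs may appear. My first attempt would be to handle the large-$n$ regime by the convexity argument and treat the finitely many small cases by direct quotient-matrix checks. The spectral comparison for the main exceptional family should be routine and close to the calculations already done in Lemma~\ref{lemma:3}.
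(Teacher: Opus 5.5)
The paper gives no proof of this lemma; it is quoted from \cite{ZhouWang201}, so there is no internal argument to compare against. Your reduction is sound and is the standard one. Hong's inequality turns $\lambda_1(G)\ge\sqrt{(n-3)^2+10}$ into $e(G)\ge\binom{n-2}{2}+6$, and your arithmetic there is right. The edge theorem you then need is exactly Lemma~\ref{lem:ZhouWang}, which the paper already states with its complete exceptional list. So what you call the main obstacle disappears: cite that lemma rather than reproving it.

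Your optional closure sketch would need repair in any case:
\begin{itemize}
\item Only ``complete $(n+1)$-closure $\Rightarrow$ Hamilton-connected'' holds, not ``exactly when''.
\item For $k=3$ the degree-sum bound is exactly $\binom{n-2}{2}+6$. The value $\binom{n-3}{2}+12$ is the $k=4$ bound.
\item The large-$k$ end of the convexity argument produces sporadic exceptions up to $n=12$ (namely $K_6\vee I_6$), not only for $n\le 10$.
\end{itemize}

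The final step is also lighter than you expect. Every graph in Lemma~\ref{lem:ZhouWang} has exactly $\binom{n-2}{2}+6$ edges, except $K_4\vee I_4$ ($n=8$) and $K_5\vee I_5$ ($n=10$), which have one more.
\begin{itemize}
\item For the graphs at exactly $\binom{n-2}{2}+6$ edges, Hong's bound is strict, since equality holds only for $K_n$ and $K_{1,n-1}$. This gives $\lambda_1<\sqrt{(n-3)^2+10}$ immediately.
\item For the other two, the $2\times 2$ quotient matrices give $\lambda_1=(3+\sqrt{73})/2\approx 5.77<\sqrt{35}$ and $\lambda_1=2+\sqrt{29}\approx 7.39<\sqrt{59}$.
\end{itemize}

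Simpler still, replace Hong's bound by Lemma~\ref{lemma:10} with $\delta\ge 3$, exactly as the paper does in Claim~\ref{claim:8}. This gives $2e(G)\ge(\rho-1)^2+3n-4$. Combining $\rho\ge\sqrt{(n-3)^2+10}$ with $\sqrt{(n-3)^2+10}<n-3+\frac{5}{n-3}$ yields $e(G)-\binom{n-2}{2}-6>2-\frac{5}{n-3}$. Hence $e(G)\ge\binom{n-2}{2}+7$ for all $n\ge 6$, and $e(G)\ge\binom{n-2}{2}+8$ for $n\ge 8$. This exceeds the edge count of every exceptional graph, so Lemma~\ref{lem:ZhouWang} gives Hamilton-connectedness with no eigenvalue computations at all.
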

	
	\begin{lemma}[Hong–Shu–Fang \cite{hong2001}]\label{lemma:10}
		Let $G$ be a simple connected graph with $n$ vertices, $m$ edges, minimum degree $\delta$, and spectral radius $\rho = \rho(G)$. Then
		\[
		\rho \le \frac{\delta - 1 + \sqrt{(\delta + 1)^2 + 4(2m - \delta n)}}{2}.
		\]
		
	\noindent Equality holds if and only if $G$ is regular or a bidegreed graph in which every vertex has degree either $\delta$ or $n-1$.
	\end{lemma}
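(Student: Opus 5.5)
The plan is a Perron-vector argument on $A(G)^2$, arranged so that it produces exactly the quadratic inequality $\rho^2-(\delta-1)\rho-(2m-\delta n+\delta)\le 0$, whose larger root is the stated bound. Since $G$ is connected, the Perron--Frobenius theorem gives a positive eigenvector $X=(x_w)$ of $A(G)$ for $\rho$. Normalize it so that $\max_w x_w=x_u=1$ for some vertex $u$. We also record $\rho\ge\delta$, which holds because the spectral radius is at least the minimum degree.

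\emph{Step 1: expand $\rho^2 x_u$.} Applying $A$ twice at $u$ gives
\[
\rho^2=\sum_{v\sim u}\sum_{w\sim v}x_w=\sum_{v\in V}\sum_{w\sim v}x_w-\sum_{v\notin N(u)}\sum_{w\sim v}x_w .
\]
The first double sum equals $\sum_w d_w x_w$. Each inner sum in the second term equals $\rho x_v$. Hence
\[
\rho^2=\sum_w d_w x_w-\rho\sum_{v\notin N(u)}x_v .
\]

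\emph{Step 2: bound the two pieces.} For the first piece, split $d_w=(d_w-\delta)+\delta$ and use $0<x_w\le 1$ together with $d_w-\delta\ge 0$. This gives
\[
\sum_w d_w x_w\le 2m-\delta n+\delta\sum_w x_w .
\]
Now split $\sum_w x_w$ into the sum over $N(u)$ and the sum over $V\setminus N(u)$; note that the latter set contains $u$. The sum over $N(u)$ equals $\rho x_u=\rho$. Combining with Step 1,
\[
\rho^2\le 2m-\delta n+\delta\rho+(\delta-\rho)\sum_{v\notin N(u)}x_v .
\]
Since $\delta-\rho\le 0$ and $\sum_{v\notin N(u)}x_v\ge x_u=1$, the last term is at most $\delta-\rho$. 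This yields
\[
\rho^2-(\delta-1)\rho-(2m-\delta n+\delta)\le 0 .
\]
Solving the quadratic and using $(\delta-1)^2+4\delta=(\delta+1)^2$ gives exactly the claimed upper bound.

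\emph{Step 3: equality.} I expect this to be the main obstacle, because the equality cases must be tracked carefully. Equality forces two conditions.
\begin{enumerate}[label=(\alph*)]
\item $x_w=1$ for every vertex $w$ with $d_w>\delta$.
\item $(\rho-\delta)\sum_{v\notin N[u]}x_v=0$.
\end{enumerate}
By positivity of $X$, condition (b) means either $\rho=\delta$ or $N[u]=V$.

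If $\rho=\delta$, then $G$ is regular, because $\rho\ge$ average degree $\ge\delta$ and equality in the first inequality forces regularity.

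Otherwise $d_u=n-1$, and this holds for every vertex attaining the maximum entry. In this case every vertex of degree $>\delta$ has $x_w=1$ and is therefore a maximizing vertex, so it has degree $n-1$. Hence $G$ is bidegreed with degrees in $\{\delta,n-1\}$.

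For the converse, verify directly that regular graphs, and graphs whose degrees are all $\delta$ or $n-1$, attain equality. In the bidegreed case, use the two-part equitable partition (degree $n-1$ vertices versus degree $\delta$ vertices) and Lemma~\ref{lemma:1}: the resulting $2\times2$ quotient matrix has characteristic polynomial equal to the quadratic above.
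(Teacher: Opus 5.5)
The paper gives no proof of this lemma. It quotes it from \cite{hong2001} and uses it only as a black box in Claim~\ref{claim:8}, in the rearranged form $2m\ge\rho^2+\rho+\delta(n-1-\rho)$. So there is no route to compare against, and what you have written is a correct, self-contained proof.

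Your identity $\rho^2=\sum_w d_wx_w-\rho\sum_{v\notin N(u)}x_v$ is correct. So are your two estimates, which use $x_w\le 1$ where $d_w>\delta$, and $\rho\ge\delta$ together with $\sum_{v\notin N(u)}x_v\ge x_u=1$. The resulting inequality $\rho^2-(\delta-1)\rho-(2m-\delta n+\delta)\le 0$ is exactly what Claim~\ref{claim:8} uses. Your Step~2 alone therefore covers the paper's application, and the equality clause is never needed there.

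Your equality analysis is also sound. The step asserting that every maximizing vertex has degree $n-1$ is legitimate. The derivation works for any $u$ with $x_u=\max_w x_w$, while the equality condition itself does not depend on $u$.

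The one place to add a sentence is the converse in the bidegreed case. Let $k$ be the number of vertices of degree $n-1$. Each vertex of degree $\delta$ is adjacent to all $k$ of them, so it has $\delta-k$ neighbours among the others. The partition is therefore equitable, with quotient rows $(k-1,\ n-k)$ and $(k,\ \delta-k)$. Its trace is $\delta-1$, and by $2m=k(n-1)+(n-k)\delta$ its determinant is $-(2m-\delta n+\delta)$, exactly as you claim.

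However, Lemma~\ref{lemma:1} only tells you that the larger root $r$ of this quadratic is \emph{some} eigenvalue of $A(G)$, i.e.\ $r\le\rho$. You should combine this with the bound you just proved, $\rho\le r$, to get $\rho=r$. Alternatively, observe that the Perron vector is constant on the two cells. With that sentence added, the argument is complete.
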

	
	Set $C_{3,4}^{-}: = C_{3}^{-} \cup C_{4}^{-}$. For \(n \geq 5\), let \(\Gamma_n\) be the signed graph obtained from a copy of \(K_{n-2}\) by removing the edge \(v_3v_4\) and adding two new vertices \(v_1\) and \(v_2\) and three edges \(v_1v_2\), \(v_1v_3\) and \(v_2v_4\), where \(v_1v_2\) is the unique negative edge (see Fig.~\ref{fig:gamma_n}).
	
	\begin{lemma}[\cite{CaiZhou2024}]\label{lemma:11}
		Let $\Gamma$ be a $C_{3,4}^-$-free unbalanced signed graph of order $n \ge 6$. Then
		\[
		\rho(\Gamma) \le \gamma_n,
		\]
		with equality if and only if $\Gamma$ is switching isomorphic to $\Gamma_n$ (see Fig.~\ref{fig:gamma_n}), where $\gamma_n$ is the largest root of the equation
		\[
		\lambda^3 - (n-6)\lambda^2 - 3(n-4)\lambda - n + 3 = 0.
		\]
	\end{lemma}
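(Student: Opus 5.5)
The plan is to treat the two halves of $\rho(\Gamma)=\max\{\lambda_1(\Gamma),-\lambda_n(\Gamma)\}$ separately. The main extremal work goes into $\lambda_1$, and a cruder argument should dispose of $-\lambda_n$.

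\textbf{Step 1: normalization and the shortest negative cycle.} Let $\Gamma$ be a $C_{3,4}^-$-free unbalanced signed graph of order $n\ge 6$ maximizing $\lambda_1$. By Lemma~\ref{lemma:4}, I may switch so that $A(\Gamma)$ has a non-negative unit eigenvector $X$ for $\lambda_1(\Gamma)$; switching preserves both unbalance and $C_{3,4}^-$-freeness. Since $\Gamma$ is unbalanced, I fix a shortest negative cycle $C$, whose length is $\ell\ge 5$.

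\textbf{Step 2: structural restrictions.} I will show three facts.
\begin{itemize}
\item $C$ is chordless. A chord splits $C$ into two shorter cycles, one of which is negative, contradicting minimality.
\item A vertex $w\notin C$ has few neighbours on $C$. If $w$ had three neighbours on $C$, or two at distance at most $2$ along $C$, then the short cycles through $w$ together with the arcs of $C$ would contain a negative cycle shorter than $\ell$. One checks this by multiplying signs around the two resulting arcs.
\item Hence each vertex of $C$ misses many vertices. Concretely, at least roughly $n-3$ pairs are non-adjacent in total, concentrated on the vertices of $C$.
\end{itemize}

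\textbf{Step 3: the extremal configuration.} Feeding this count into Lemma~\ref{lemma:10} via $\lambda_1(\Gamma)\le\lambda_1(G)$ (Lemma~\ref{lemma:7}) should rule out $\ell\ge 6$, as well as configurations with more missing edges, because $\gamma_n$ is about $n-3$ and those graphs fall below it. In the remaining case $\ell=5$, maximality and Lemma~\ref{lemma:6}(i) force every admissible positive edge to be present. I would then argue as follows.
\begin{itemize}
\item All vertices outside the $5$-cycle, plus two of its vertices, form a near-clique.
\item The remaining two cycle vertices have degree $2$ and are joined by the unique negative edge (after switching).
\item Adding any further edge creates a negative $C_3$ or $C_4$.
\end{itemize}
This identifies the underlying signed graph as $\Gamma_n$. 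The strictness clause of Lemma~\ref{lemma:6} gives uniqueness, because $X$ is positive on the relevant vertices by connectivity. Finally, an equitable partition of $\Gamma_n$, as in Lemma~\ref{lemma:3}, yields the cubic defining $\gamma_n$.

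\textbf{Step 4: the negative end of the spectrum.} I must show $-\lambda_n(\Gamma)\le\gamma_n$, that is, $\lambda_1(-\Gamma)\le\gamma_n$. Negation preserves the signs of even cycles and flips odd ones, so every triangle of $-\Gamma$ is negative. The plan is to show that this makes $\lambda_1(-\Gamma)$ small. I would switch $-\Gamma$ to a non-negative eigenvector $Y$. Every triangle of $-\Gamma$ containing three positive entries of $Y$ carries a negative edge, so deleting the negative edges among the support of $Y$ leaves a triangle-free positive part. Lemma~\ref{lemma:6}(ii) then gives $\lambda_1(-\Gamma)\le\lambda_1$ of a triangle-free graph, which is at most $n/2<\gamma_n$ for $n\ge 6$.

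\textbf{Main obstacle.} I expect Step 3 to be the hardest part: the sharp comparison between near-extremal underlying graphs and $\gamma_n$, where the Hong--Shu--Fang bound may not be tight enough. If it fails, I would fall back on explicit quotient-matrix computations for the few candidate configurations left by the adjacency analysis of Step 2.
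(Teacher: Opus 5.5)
Your Step~2 is false, and the extremal graph refutes it. In $\Gamma_n$ a shortest negative cycle is $C=v_1v_2v_4xv_3v_1$ with $x\in V(K_{n-2})\setminus\{v_3,v_4\}$. Each of the remaining $n-5$ vertices is adjacent to the three consecutive vertices $v_4,x,v_3$ of $C$.

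Minimality of $C$ actually forbids the opposite situation. Suppose $w\notin C$ has two neighbours at distance $d$ along $C$. The two cycles through $w$ they determine have lengths $d+2$ and $\ell-d+2$, and exactly one of them is negative, which forces $d\le 2$. On $C_5$ every pair is at distance at most $2$, and on $C_6$ the triple $c_1,c_3,c_5$ survives. So distance alone is not enough: you must multiply the signs of the triangles and $4$-cycles through $w$. This gives the true statement: $w$ has at most three neighbours on $C$, and three only if they are consecutive.

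Hence your count of ``roughly $n-3$'' missing pairs is unfounded, and so is your picture of the extremal graph as a near-clique containing two cycle vertices. For $\ell=5$ the correct count is at least $2(n-5)+5=2n-5$ non-edges. This is attained by $\Gamma_n$, whose near-clique $K_{n-2}-v_3v_4$ contains three cycle vertices. With this count, Lemma~\ref{lemma:10} can still exclude $\ell\ge 6$ when $n\ge 7$, but it cannot handle $\ell=5$.

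That is the decisive gap: for $\ell=5$, nothing routed through the underlying graph suffices. Since $\Gamma_n$ is connected and unbalanced, Lemma~\ref{lemma:7} gives $\lambda_1(G_n)>\gamma_n$ for its underlying graph $G_n$. Moreover, Lemma~\ref{lemma:10} returns exactly $n-3>\gamma_n$ for every connected graph with $2n-5$ non-edges and minimum degree $2$.

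Such graphs include genuine competitors. Let $v_1v_2$ be the only negative edge, with $N(v_1)=\{v_2,u\}$ and $N(v_2)=\{v_1\}\cup S$, where $|S|=s$. Make all other pairs adjacent except $u$ with $S$. For $n\ge 7$ and $2\le s\le n-5$, this graph is $C_{3,4}^-$-free and unbalanced, with shortest negative cycle of length $5$. Yet it has a single vertex of degree $2$, so it is not switching isomorphic to $\Gamma_n$.

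What is missing is the core of the lemma. You need an eigenvector argument showing that an extremal graph with $\ell=5$ has all off-cycle vertices attached to one common window of three consecutive cycle vertices, and pairwise adjacent. That must be followed by a comparison of signed quotient matrices. ``Every admissible positive edge is present'' does not decide among such saturated configurations.

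There are two smaller problems. First, a non-negative eigenvector of a connected signed graph may have zero entries, since Perron--Frobenius does not apply. So the equality clause of Lemma~\ref{lemma:6} cannot be invoked ``by connectivity''. Second, Step~4 is a sound argument for $n\ge 7$ but fails at $n=6$. There $\gamma_6$, the largest root of $\lambda^3-6\lambda-3$, is about $2.67<3=n/2$, while triangle-free graphs such as $K_{2,4}$ have index $2\sqrt{2}>\gamma_6$. So $n=6$ needs a separate check.
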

	
	\begin{figure}[ht]
		\centering
		\begin{tikzpicture}[
			vertex/.style={circle, draw, fill=black, minimum size=8pt, inner sep=0pt},
			bigellipse/.style={ellipse, draw, thick, minimum width=4.8cm, minimum height=2.8cm},
			blackedge/.style={draw=black, thick}
			]
			% 椭圆 K_{n-2}，中心在 (0,0)
			\node[bigellipse] (K) at (0,-0.2) {};
			\node at (0,-0.8) {\textsf{\(K_{n-2} - v_3 v_4\)}};
			
			% v1, v2 位于椭圆上方
			\node[vertex, label={[label distance=2pt]above left:$v_1$}] (v1) at (-1.2, 1.8) {};
			\node[vertex, label={[label distance=2pt]above right:$v_2$}] (v2) at ( 1.2, 1.8) {};
			
			% 【修改这里】v3, v4 的标签改为 left 和 right，实现往下调
			\node[vertex, label={[label distance=2pt]left:$v_3$}] (v3) at (-1.2, 0.4) {};
			\node[vertex, label={[label distance=2pt]right:$v_4$}] (v4) at ( 1.2, 0.4) {};
			
			% 连接 v1-v3, v2-v4 黑边
			\draw[blackedge] (v1) -- (v3);
			\draw[blackedge] (v2) -- (v4);
			
			% 连接 v1-v2 红边，并加上负号
			\draw[red, thick] (v1) -- (v2) node[midway, below, text=black] {\scriptsize \(-\)};
		\end{tikzpicture}
		\caption{The signed graph \(\Gamma_n\) (red lines indicate negative edges).}
		\label{fig:gamma_n}
	\end{figure}
	
	\begin{lemma}[\cite{ZhouWang201}]\label{lem:ZhouWang}
		Let $G$ be a connected graph on $n \ge 6$ vertices with minimum degree $\delta(G) \ge 3$. If
		\[
		e(G) \ge \binom{n-2}{2} + 6,
		\]
		then $G$ is Hamilton-connected unless $G = K_3 \vee (K_{n-5} \cup I_2)$ or $G$ is one of the following exceptional graphs: when $n = 8$, $G \in \{ K_4 \vee I_4,\; K_3 \vee (K_{1,3} \cup K_1),\; K_3 \vee (K_{1,2} \cup K_2),\; K_2 \vee K_{2,4} \}$; when $n = 9$, $G = K_4 \vee (K_2 \cup I_3)$; when $n = 10$, $G \in \{ K_5 \vee I_5,\; K_4 \vee (K_{1,4} \cup K_1),\; K_3 \vee K_{2,5},\; K_4 \vee (K_{1,3} \cup K_2) \}$; when $n = 12$, $G = K_6 \vee I_6$.
	\end{lemma}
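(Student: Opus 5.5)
The plan is to go through the degree-sequence criterion for Hamilton-connectedness (the analogue of Chvátal's theorem, due to Berge/Chvátal), combined with the Bondy--Chvátal closure. A graph is Hamilton-connected exactly when its $(n+1)$-closure is, so the closure lets us work with a graph that has at least as many edges and is still not Hamilton-connected. Suppose $G$ satisfies the hypotheses but is not Hamilton-connected, and let $d_1\le d_2\le\cdots\le d_n$ be its degree sequence. The degree criterion then gives an index $i$ with $2\le i\le n/2$ such that
\[
d_{i-1}\le i \quad\text{and}\quad d_{n-i}\le n-i .
\]
Since $\delta(G)\ge 3$, the inequality $d_{i-1}\le i$ rules out $i=2$, so $3\le i\le n/2$.

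Next I bound the edge count by degrees: $i-1$ vertices of degree at most $i$, $n-2i+1$ vertices of degree at most $n-i$, and $i$ vertices of degree at most $n-1$. This gives
\[
2e(G)\le f(i):=(i-1)i+(n-2i+1)(n-i)+i(n-1).
\]
The function $f$ is a convex quadratic in $i$, so its maximum on $[3,n/2]$ is attained at an endpoint. At $i=3$ one gets $f(3)=n^2-5n+18=2\bigl(\binom{n-2}{2}+6\bigr)$. The hypothesis therefore forces equality in every degree estimate, which pins down the structure. Two vertices have degree exactly $3$ and $n-5$ vertices have degree $n-3$. Three vertices are universal, and the two degree-$3$ vertices are adjacent only to them. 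This is exactly $K_3\vee(K_{n-5}\cup I_2)$. Also, this graph is indeed not Hamilton-connected: no Hamiltonian path joins the two degree-$3$ vertices.

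At the other endpoint $i=\lfloor n/2\rfloor$, compare $f(i)$ with $n^2-5n+18$. For large $n$, $f(n/2)\approx \tfrac34 n^2$ is strictly smaller, so that endpoint contradicts the edge hypothesis. For small $n$, however, $f(\lfloor n/2\rfloor)$ (and intermediate values of $i$) can reach or exceed $2\binom{n-2}{2}+12$. Solving $f(i)\ge n^2-5n+18$ with $4\le i\le n/2$ should show this happens only for $n\le 12$, essentially only for $n\in\{8,9,10,12\}$. In those cases I enumerate, for each admissible $i$, the graphs that meet the degree conditions with enough edges. The extremal degree pattern is $K_i\vee(K_{n-2i+1}\cup I_{i-1})$ together with graphs obtained by trading edges inside the low-degree part. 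I then check each candidate directly for Hamilton-connectedness, keeping only the listed exceptions such as $K_4\vee I_4$, $K_5\vee I_5$, $K_6\vee I_6$ and $K_3\vee(K_{1,3}\cup K_1)$.

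The main obstacle is this last small-$n$ enumeration. The equality analysis is no longer tight there, so one must pass from the closure back to $G$ and argue that removing edges from a candidate either drops below the edge threshold or creates a Hamiltonian path. That requires a case-by-case check. I would first handle it by observing that each exceptional candidate has exactly $\binom{n-2}{2}+6$ edges or slightly more. Any $G$ whose closure is such a candidate must then coincide with it or differ by at most a few edges, which can be checked by hand.
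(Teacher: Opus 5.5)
The paper gives no proof of this lemma; it only quotes it from \cite{ZhouWang201}, so your argument has to stand on its own. Its main line does. It is the standard argument via Berge's degree-sequence criterion for Hamilton-connectedness, and your computations are right. Your bound factors as $f(i)-f(3)=(i-3)(3i+6-2n)$, so an index $i\ge 4$ can compete only when $(2n-6)/3\le i\le n/2$. This leaves exactly $(n,i)\in\{(8,4),(9,4),(10,5),(12,6)\}$. In these cases the slack $f(i)-f(3)$ in the degree sum is $2,0,2,0$ respectively, i.e.\ at most one edge. Your equality analysis for $i=3$ is correct.

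There are three things to fix.

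\textbf{The non-Hamilton-connectedness witness.} Your reason why $K_3\vee(K_{n-5}\cup I_2)$ is not Hamilton-connected is false. Let $a,b$ be the degree-$3$ vertices and $s_1,s_2,s_3$ the universal ones. Then $a\,s_1\,(\text{a Hamiltonian path of }K_{n-5})\,s_2\,s_3\,b$ is a Hamiltonian $a$--$b$ path. The real obstruction is a pair of vertices inside the $K_3$. Deleting the $K_3$ leaves three components, whereas a Hamiltonian path with both ends in a $3$-set splits into at most two pieces when that set is removed. This check is not needed for the implication, but as written it is wrong.

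\textbf{The small-$n$ enumeration.} You do not need the closure here, and the candidates do not come from ``trading edges inside the low-degree part''. Because the slack is at most $2$, you can work directly with the degree sequence of $G$.
\begin{enumerate}
\item If all top $i$ vertices are universal, every other vertex already has degree at least $i$. Equality is then forced, giving $K_4\vee I_4$, $K_4\vee(K_2\cup I_3)$, $K_5\vee I_5$ and $K_6\vee I_6$.
\item Otherwise some top vertex is non-universal. Spending the slack then gives:
\begin{itemize}
\item for $n=8$: $K_3\vee(K_{1,3}\cup K_1)$, $K_3\vee(K_{1,2}\cup K_2)$ and $K_2\vee K_{2,4}$;
\item for $n=10$: $K_4\vee(K_{1,4}\cup K_1)$, $K_4\vee(K_{1,3}\cup K_2)$ and $K_3\vee K_{2,5}$;
\item in both cases, one further graph: the two top vertices of degree $n-2$ are adjacent, each misses a different low vertex, and those two low vertices are joined.
\end{itemize}
\end{enumerate}

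\textbf{Which candidates actually fail.} When you check the candidates, the $(n+1)$-closure is complete for $K_3\vee(K_{1,2}\cup K_2)$, for $K_4\vee(K_{1,3}\cup K_2)$ and for the two extra graphs. So these four are Hamilton-connected. Only the remaining graphs genuinely fail: $K_3\vee(K_{n-5}\cup I_2)$, $K_4\vee I_4$, $K_5\vee I_5$, $K_6\vee I_6$, $K_4\vee(K_2\cup I_3)$, $K_3\vee(K_{1,3}\cup K_1)$, $K_2\vee K_{2,4}$, $K_4\vee(K_{1,4}\cup K_1)$ and $K_3\vee K_{2,5}$. The lemma is only an implication (``Hamilton-connected unless \dots''), so the two superfluous entries in its list are harmless. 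But your plan to recover exactly the listed exceptions will not come out that way.
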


	\begin{proof}[\textbf{Proof of Theorem~\ref{thm:spectral}}]
		Suppose that $\Gamma = (G, \sigma)$ has the maximum index among all unbalanced 
		signed graphs of order $n$ containing no negative Hamiltonian cycle. By Lemma~\ref{lemma:4}, let $\Gamma' = (G, \sigma')$ be a signed graph switching equivalent to $\Gamma$ such that 
		$A(\Gamma')$ has a non-negative eigenvector corresponding to $\lambda_1(\Gamma')$. 
		So $\Gamma'$ is also unbalanced, contains no negative Hamiltonian cycle, and is extremal. Write $V(\Gamma') = \{v_1, v_2, \dots, v_n\}$, and let $X = (x_1, x_2, \dots, x_n)^{\top}$ be the non-negative unit eigenvector 
		of $A(\Gamma')$ corresponding to $\lambda_1(\Gamma')$, where $x_i$ corresponds to 
		the vertex $v_i$ for $1 \leq i \leq n$. Since $\Gamma_1$ is unbalanced and contains no negative Hamiltonian cycle, $\lambda_1(\Gamma') \geq \lambda_1(\Gamma_1) > n - \frac{5}{2}$ by 
		\hyperref[lemma:3ii]{(ii)} of Lemma~\ref{lemma:3}. 
		
		\begin{claim}\label{claim:1}
			$X$ has at most one zero coordinate.
		\end{claim}
		\begin{proof}
			Suppose that at least two coordinates of $X$ are zero. After relabeling the vertices, we may assume that $x_1 = x_2 = 0$. Then
			\begin{align*}
				\lambda_1(\Gamma') &= X^{\top} A(\Gamma')X =(x_3,x_4, \dots,x_n)A(\Gamma'- v_1 - v_2)(x_3,x_4, \dots,x_n)^{\top}\\
				&\leq \lambda_1(\Gamma' - v_1 - v_2)\\
				& \leq \lambda_1(K_{n-2}) \\
				&= n-3,
			\end{align*}
			a contradiction. 
		\end{proof}
		
		\begin{claim}\label{claim:2}
			$\Gamma'$ is connected.
		\end{claim}
		\begin{proof}
			Otherwise, assume that $\Gamma'_1$ and $\Gamma'_2$ are two distinct connected 
			components of $\Gamma'$ with $\lambda_1(\Gamma') = \lambda_1(\Gamma'_1)$. 
			Let $v_i \in V(\Gamma'_1)$ and $v_j \in V(\Gamma'_2)$. Then we can construct a new signed graph $\Gamma^*$ from $\Gamma'$ by adding a positive edge $v_i v_j$. Clearly, $\Gamma^*$ is also unbalanced signed graph contains no negative Hamiltonian cycle and 
			\begin{align*}
				\lambda_1(A(\Gamma^*))-\lambda_1(A(\Gamma'))
				&\ge X^{T}\bigl(A(\Gamma^*)-A(\Gamma')\bigr)X \\
				&= 2x_i x_j \ge 0.
			\end{align*}
			 If \(\lambda_1(A(\Gamma^*))=\lambda_1(A(\Gamma'))\), then \(X\) is also an eigenvector of \(A(\Gamma^*)\) corresponding to \(\lambda_1(A(\Gamma^*))\). By \hyperref[lemma:6i]{(i)} of Lemma~\ref{lemma:6}, we obtain that \(x_i=x_j=0\), which contradicts Claim ~\ref{claim:1}. Thus, $\lambda_1(\Gamma^*) > \lambda_1(\Gamma')$, a contradiction.
		\end{proof}
		
		Let $\mathcal{C}$ be the shortest negative cycle of $\Gamma'$ and $ l = |V(C)|$.
		
		\begin{claim}\label{claim:3}
			$\mathcal{C}$ contains all negative edges of $\Gamma'$.
		\end{claim}
		\begin{proof}
			Otherwise, assume that $e = v_i v_j$ is a negative edge of $\Gamma'$ and $e \notin E(\mathcal{C})$. Then we can construct a new signed graph $\Gamma^{*}$ from $\Gamma'$ by deleting $e$. Clearly, $\Gamma^{*}$ is also unbalanced signed graph contains no negative Hamiltonian cycle and 
			\begin{align*}
				\lambda_1(A(\Gamma^*))-\lambda_1(A(\Gamma'))
				&\ge X^{T}\bigl(A(\Gamma^*)-A(\Gamma')\bigr)X \\
				&= 2x_i x_j \ge 0.
			\end{align*}
			If \(\lambda_1(A(\Gamma^*))=\lambda_1(A(\Gamma'))\), then \(X\) is also an eigenvector of \(A(\Gamma^*)\) corresponding to \(\lambda_1(A(\Gamma^*))\). By \hyperref[lemma:6ii]{(ii)} of Lemma~\ref{lemma:6}, we obtain that \(x_i=x_j=0\), which contradicts Claim ~\ref{claim:1}. Hence, $\lambda_1(\Gamma^*) > \lambda_1(\Gamma')$, a contradiction.
			 
		\end{proof}
		
		\begin{claim}\label{claim:4}
			$\delta(\Gamma') \ge 2$.
		\end{claim}
		\begin{proof}
			Otherwise, assume that there exists a vertex $u$ such that $d(u)=1$ and $N(u)=\{v\}$. We  first claim that $\Gamma' - u$ is a signed complete graph. Otherwise, assume that there exist vertices $v_i, v_j \in V(\Gamma'-u)$ such that $v_i v_j \notin E(\Gamma'-u)$. Then we can construct a new signed graph $\Gamma^*$ from $\Gamma'$ by adding a positive edge $v_i v_j$. Clearly, $\Gamma^*$ is also unbalanced signed graph contains no negative Hamiltonian cycle and 
			\begin{align*}
				\lambda_1(A(\Gamma^*))-\lambda_1(A(\Gamma'))
				&\ge X^{T}\bigl(A(\Gamma^*)-A(\Gamma')\bigr)X \\
				&= 2x_i x_j \ge 0.
			\end{align*}
			If \(\lambda_1(A(\Gamma^*))=\lambda_1(A(\Gamma'))\), then \(X\) is also an eigenvector of \(A(\Gamma^*)\) corresponding to \(\lambda_1(A(\Gamma^*))\). By \hyperref[lemma:6i]{(i)} of Lemma~\ref{lemma:6}, we obtain that \(x_i=x_j=0\), which contradicts Claim ~\ref{claim:1}. Therefore, $\lambda_1(\Gamma^*) > \lambda_1(\Gamma')$, a contradiction.
			
			Next, we assert that $\Gamma' - u$ has exactly one negative edge. Claim~\ref{claim:3} shows that all negative edges lie on a shortest negative cycle of $\Gamma'-u$. Since $\Gamma' - u$ is an unbalanced signed complete graph, it must have one or three negative edges
			by Lemma~\ref{lemma:8}. If $\Gamma'-u$ has  three negative edges: $v_iv_j,v_jv_k,v_kv_i$, then we can construct a new signed graph $\Gamma^*$ from $\Gamma'$ by reversing the signs of $v_iv_j,v_jv_k$ on $\mathcal{C}$. Clearly, $\Gamma^*$ is also unbalanced signed graph contains no negative Hamiltonian cycle and 
			\begin{align*}
				\lambda_1(A(\Gamma^*))-\lambda_1(A(\Gamma'))
				&\ge X^{T}\bigl(A(\Gamma^*)-A(\Gamma')\bigr)X \\
				&= 4(x_i x_j+x_j x_k) \ge 0.
			\end{align*}
			If \(\lambda_1(A(\Gamma^*))=\lambda_1(A(\Gamma'))\), then \(X\) is also an eigenvector of \(A(\Gamma^*)\) corresponding to \(\lambda_1(A(\Gamma^*))\). By \hyperref[lemma:6iii]{(iii)} of Lemma~\ref{lemma:6}, we obtain that \(x_i=x_j=x_k=0\), which contradicts Claim ~\ref{claim:1}. Thus, $\lambda_1(\Gamma^*) > \lambda_1(\Gamma')$, a contradiction.
			Therefore, $\Gamma' - u$ is a signed complete graph with exactly one negative edge, denoted by $e_2 = wz$. Set $e_1 = uv$.
			
			If $e_1 \cap e_2 = \{v\}$, then we can construct a new signed graph $\Gamma^*$ from $\Gamma'$ by adding a positive edge $uz$ or $uw$. Without loss of generality, we adding the positive edge $uz$.
			Clearly, $\Gamma^*$ is also unbalanced signed graph contains no negative Hamiltonian cycle and 
			\begin{align*}
				\lambda_1(A(\Gamma^*))-\lambda_1(A(\Gamma'))
				&\ge X^{T}\bigl(A(\Gamma^*)-A(\Gamma')\bigr)X \\
				&= 2x_u x_z \ge 0.
			\end{align*}
			If \(\lambda_1(A(\Gamma^*))=\lambda_1(A(\Gamma'))\), then \(X\) is also an eigenvector of \(A(\Gamma^*)\) corresponding to \(\lambda_1(A(\Gamma^*))\). By \hyperref[lemma:6i]{(i)} of Lemma~\ref{lemma:6}, we obtain that \(x_u=x_z=0\), which contradicts Claim ~\ref{claim:1}. Hence, $\lambda_1(\Gamma^*) > \lambda_1(\Gamma')$, a contradiction.
			
			If $e_1 \cap e_2 = \emptyset$, then
			\[
			A(\Gamma') = \begin{bmatrix}
				0 & 1 & \bm{0} & \bm{0} \\
				1 & 0 & J & J \\
				\bm{0} & J & I-J & J \\
				\bm{0} & J & J & J - I
			\end{bmatrix}
			\begin{array}{l}
				u \\
				v \\
				\{w,z\} \\
				V(\Gamma')\setminus\{u,v,w,z\}
			\end{array}
			\]
			and $A(\Gamma')$ has the equitable quotient matrix
			\[
			Q_4 = \begin{bmatrix}
				0 & 1 & 0 & 0 \\
				1 & 0 & 2 & n-4 \\
				0 & 1 & -1 & n-4 \\
				0 & 1 & 2 & n-5
			\end{bmatrix}.
			\]
			By calculation, the characteristic polynomial of $Q_4$ is
			\[
			P_{Q_4}(x) = x^4+(6-n)x^3+(14-4n)x^2+(4-2n)x+3n-13.
			\]
			
			\noindent By Lemma~\ref{lemma:1}, all eigenvalues of $Q_4$ belong to the spectrum of $A(\Gamma')$ and by subtracting a suitable scalar multiple of the all-ones block from each subblock of $A(\Gamma')$, we derive the simplified block matrix
			\[
			A_4 =
			\begin{bmatrix}
				0 & 0 & 0 & \bm{0} \\    
				0 & 0 & 0 & \bm{0} \\    
				0 & 0 & I_2 & \bm{0} \\  
				\bm{0} & \bm{0} & \bm{0} & -I_{n-4}
			\end{bmatrix}.
			\]
			
		\noindent  Since $\lambda_1(\Gamma_1) > 1 $, $\operatorname{Spec}(A_4) = \{-1^{[n-4]}, 0^{[2]}, 1^{[2]}\}$ and $\lambda_1(\Gamma') = \lambda_1(Q_4)$. Recall that \(P_{Q_1}(x)\) is the characteristic polynomial of the equitable quotient matrix $Q_1$ of \(\Gamma_1\). A direct computation gives
			
			\[
			P_2(x)=P_{Q_1}(x)-	P_{Q_4}(x) = -2x^3 + (2n-11)x^2 + (7n-20)x + 21 - 5n.
			\]
			Let $\lambda_1 = \lambda_1(\Gamma_1) > n-\frac{5}{2} \geq \frac{11}{2}$ be the largest real root of \(	P_{Q_1}(x)\) from \hyperref[lemma:3i]{(i)} of Lemma~\ref{lemma:3}. Since \(\lambda_1\) is the largest real root of \(P_{Q_1}(x)=(x-1)\bigl(x^3+(5-n)x^2+(8-3n)x+2n-8\bigr)\) and \(P_{Q_1}\) has positive leading coefficient, we have \(P_{Q_1}(x)\ge 0\) for every \(x\ge \lambda_1\), with equality only at \(x=\lambda_1\).  Write \(P_{Q_1}(x)=A(x)+nB(x)\), where \(A(x)=x^4+4x^3+3x^2-16x+8\) and \(B(x)=-x^3-2x^2+5x-2\). \(B(\lambda_1) < 0\) since \(\lambda_1>\frac{11}{2}\). Solving \(	P_{Q_1}(\lambda_1)=0\) for \(n\) yields
			\[
			n = -\frac{A(\lambda_1)}{B(\lambda_1)} = \frac{\lambda_1^3 + 5\lambda_1^2 + 8\lambda_1 - 8}{\lambda_1^2 + 3\lambda_1 - 2}.
			\]
			Note that
			\[
			P_{Q_4}(\lambda_1) = 2\lambda_1^3 + (11-2n)\lambda_1^2 + (20-7n)\lambda_1 + 5n - 21.
			\]
			 Substituting this expression for \(n\) and
			simplifying, we obtain
			\[
			P_{Q_4}(\lambda_1) = \frac{3\lambda_1^3 + 2\lambda_1^2 - 7\lambda_1 + 2}{\lambda_1^2 + 3\lambda_1 - 2}.
			\]
			Set
			\[
			M(\lambda_1)=3\lambda_1^3+2\lambda_1^2-7\lambda_1+2.
			\]
			Since \(M'(\lambda_1)=9\lambda_1^2+4\lambda_1-7>0\) for \(\lambda_1>1\), \(M\) is increasing for \(\lambda_1>1\). 
			Since \(M(\frac{11}{2})>0\), we have \(M(\lambda_1)>0\). Hence \(P_{Q_4}(\lambda_1) > 0\). Furthermore, \(P_2''(x) = -12x + 4n - 22\). When \(x \geq n - 3\), \(P_2''(x) \leq -8n + 14 < 0\).  \(P_2'(n - 3) = -2n^2 + 9n - 8 < 0\) (for \(n > 8\)). So \(P_2'\) is always negative on \([n - 3, \infty)\), and \(P_2\) is strictly decreasing.  Since $\lambda_1  > n-\frac{5}{2} > n-3$, we obtain that $P_2(x) < 0$ for $x \geq \lambda_1$. Recall that \(P_{Q_1}(x)\ge 0\) for \(x\ge \lambda_1\), it follows that \(P_{Q_4}(x)=P_{Q_1}(x)-P_2(x)>0\) for \(x\ge \lambda_1\). Thus \(P_{Q_4}\) has no real zero in \([\lambda_1,\infty)\). As all other eigenvalues of \(Q_4\) are strictly less than \(\lambda_1\), we obtain \(\lambda_1(\Gamma')=\lambda_1(Q_4)<\lambda_1=\lambda_1(\Gamma_1)\), a contradiction.
		 
		\end{proof}
	\end{proof}
	
	\vspace{-3em}
	
	\begin{claim}\label{claim:5}
		The underlying graph $G$ contains a Hamiltonian cycle.
	\end{claim}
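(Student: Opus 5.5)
We argue that the underlying graph $G$ is Hamiltonian by combining Claim~\ref{claim:4} with Lemma~\ref{lemma:2} (Benediktovich's theorem) and then excluding the exceptional graphs with Lemma~\ref{lemma:3}.

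\textbf{Step 1: reach the threshold of Lemma~\ref{lemma:2}.} By Lemma~\ref{lemma:7}, $\lambda_1(G)\ge \lambda_1(\Gamma')$. We already know $\lambda_1(\Gamma')\ge\lambda_1(\Gamma_1)>n-\frac52>n-3$. By Claim~\ref{claim:2}, $G$ is connected, and by Claim~\ref{claim:4}, $\delta(G)\ge 2$. Since $n>8$, all hypotheses of Lemma~\ref{lemma:2} hold. Hence $G$ is Hamiltonian unless $G$ is $K_2\vee(K_{n-4}\cup I_2)$ or $K_1\vee(K_{n-3}\cup K_2)$, or, when $n=9$, $I_5\vee K_4$ or $K_3\vee(K_{1,4}\cup K_1)$.

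\textbf{Step 2: exclude the two generic exceptions.} Parts (ii) and (iii) of Lemma~\ref{lemma:3} give
\[
\lambda_1(K_1\vee(K_{n-3}\cup K_2))<n-\tfrac52
\quad\text{and}\quad
\lambda_1(K_2\vee(K_{n-4}\cup I_2))<n-\tfrac52 .
\]
So if $G$ were either graph, we would get $\lambda_1(G)<n-\frac52<\lambda_1(\Gamma')$, contradicting $\lambda_1(\Gamma')\le\lambda_1(G)$.

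\textbf{Step 3: exclude the two extra graphs at $n=9$.} Here the target is $\lambda_1(G)<\frac{13}{2}=n-\frac52$, and I expect this to be the main (though routine) obstacle.
\begin{itemize}
\item For $I_5\vee K_4$, the equitable quotient matrix $\begin{bmatrix}0&4\\5&3\end{bmatrix}$ gives $\lambda_1=\frac{3+\sqrt{89}}{2}\approx 6.22<6.5$.
\item For $K_3\vee(K_{1,4}\cup K_1)$, which has $e=3+15+4=22$ edges and $\delta=3$, apply Lemma~\ref{lemma:10} with $m=22$, $\delta=3$, $n=9$:
\[
\rho\le \frac{2+\sqrt{16+4(44-27)}}{2}=\frac{2+\sqrt{84}}{2}\approx 5.58<6.5 .
\]
An equitable-partition computation would also work.
\end{itemize}
Both values lie below $n-\frac52$, giving the same contradiction as in Step~2.

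With every exception ruled out, Lemma~\ref{lemma:2} shows that $G$ contains a Hamiltonian cycle.
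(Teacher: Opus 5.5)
Your argument is the paper's proof. You apply Lemma~\ref{lemma:2} with $\delta\ge 2$ from Claim~\ref{claim:4}, exclude the two generic exceptions by Lemma~\ref{lemma:3}(ii) and (iii), and check the two $n=9$ exceptions directly. The only real difference is that for $K_3\vee(K_{1,4}\cup K_1)$ you use the bound of Lemma~\ref{lemma:10} instead of computing its index ($\approx 6.032$) as the paper does.

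That step contains an arithmetic slip. $K_3\vee(K_{1,4}\cup K_1)$ has $3+4+3\cdot 6=25$ edges, not $22$: the join contributes $18$ edges because $K_{1,4}\cup K_1$ has six vertices. Your stated bound $\approx 5.58$ is therefore false, since it lies below the true index $\approx 6.03$.

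With $m=25$, Lemma~\ref{lemma:10} gives
\[
\rho\le\frac{2+\sqrt{16+4(50-27)}}{2}=\frac{2+\sqrt{108}}{2}\approx 6.20<\frac{13}{2},
\]
because $108<121$. So once the edge count is corrected, the step and the conclusion both stand.
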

	\begin{proof}
		By Claim \ref{claim:4} and $\lambda_1(G) \ge \lambda_1(\Gamma') \geq \lambda_1(\Gamma_1) > n - \frac{5}{2} > n - 3$, we know that \(G\) has a Hamiltonian cycle unless \(G\) is one of the following four exceptional graphs: $H_1 = K_2 \vee (K_{n-4} \cup I_2)$ or 
		$H_2 = K_1 \vee (K_{n-3} \cup K_2)$, $H_3 =I_5 \vee K_4$ or $H_4=K_3 \vee (K_{1,4} \cup K_1) $. By Lemma~\ref{lemma:3} \hyperref[lemma:3ii]{(ii)} and \hyperref[lemma:3iii]{(iii)}, \hfill $\lambda_1(H_1)$, $\lambda_1(H_2) < n - \frac{5}{2}$. A direct computation gives \(\lambda_1(H_3) = \frac{3+\sqrt{89}}{2} \approx 6.217<\frac{13}{2}\) and \(\lambda_1(H_4) \approx 6.032<\frac{13}{2}\). Therefore \(G\) is not one of the above four exceptional graphs. Thus the underlying graph \(G\) contains a Hamiltonian cycle.
	\end{proof}
	
	Let $e^-(\Gamma')$ denote the number of negative edges of $\Gamma'$.
	
	\begin{claim}
		$e^-(\Gamma')=1$ or $3$.\label{claim:11}
	\end{claim}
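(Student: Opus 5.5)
By Claim~\ref{claim:3}, every negative edge of $\Gamma'$ lies on the shortest negative cycle $\mathcal{C}$. So $e^-(\Gamma')\le l$, and $e^-(\Gamma')$ is odd because $\mathcal{C}$ is negative. I will show that $e^-(\Gamma')\ge 5$ leads to a contradiction with the extremality of $\Gamma'$, using the same edge-modification argument as in Claim~\ref{claim:4}.

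Suppose $e^-(\Gamma')=2k+1\ge 5$, and let $e_1,\dots,e_{2k+1}$ be the negative edges, all on $\mathcal{C}$. Form $\Gamma^*$ from $\Gamma'$ by reversing the signs of $e_3,e_4,\dots,e_{2k+1}$, which is an even number $2k-1-1+1=2k-1$ of edges. To keep an even count, reverse exactly $2k-2$ of them, say $e_4,\dots,e_{2k+1}$. Then $e_1,e_2,e_3$ remain negative and $\Gamma^*$ has exactly three negative edges. The underlying graph is unchanged.

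The key check is that $\Gamma^*$ still belongs to the admissible class.
\begin{enumerate}
\item[(a)] Unbalanced: $\mathcal{C}$ still carries an odd number of negative edges, namely three.
\item[(b)] No negative Hamiltonian cycle: the sign of any cycle $D$ in $\Gamma^*$ is its sign in $\Gamma'$ multiplied by $(-1)^{|E(D)\cap\{e_4,\dots,e_{2k+1}\}|}$. This is where the argument is delicate, since a Hamiltonian cycle could use an odd number of the reversed edges.
\end{enumerate}

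To handle (b), I instead choose the reversed set so that the parity of every Hamiltonian cycle is preserved. The natural choice is to reverse an even number of edges forming a pair $e_i,e_j$ that are consecutive on $\mathcal{C}$ and meet at a vertex $w$ of degree... This does not control parity for general Hamiltonian cycles. The cleaner alternative is a switching argument: reversing the signs of all edges at a vertex preserves the sign of every cycle. In this framework, though, the admissible perturbations are those of Lemma~\ref{lemma:6}, and those only increase $\lambda_1$.

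I therefore expect the intended route to be the following. Compare $\Gamma^*$, obtained by reversing $2k-2$ negative edges, with $\Gamma'$ via Lemma~\ref{lemma:6}(iii):
\[
\lambda_1(\Gamma^*)-\lambda_1(\Gamma')\ge 4\sum x_ix_j\ge 0 .
\]
Equality is impossible by Claim~\ref{claim:1}, so $\lambda_1(\Gamma^*)>\lambda_1(\Gamma')$. The remaining task is to argue that $\Gamma^*$ has no negative Hamiltonian cycle, or else to derive a contradiction directly.

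Here I would use Claim~\ref{claim:5} together with the density of $G$, which follows from $\lambda_1(G)>n-\tfrac52$ via Lemma~\ref{lemma:10}: $e(G)$ is within about $n$ of $\binom n2$. Because $G$ is so dense, rerouting arguments apply: for a long negative cycle $\mathcal{C}$ with $l\ge 5$, chords of $\mathcal{C}$ would create a shorter negative cycle unless they are absent, contradicting the minimality of $\mathcal{C}$. So I would first show that minimality of $\mathcal{C}$ in a graph this dense forces $l\le 4$. That immediately gives $e^-(\Gamma')\le 3$, hence $e^-(\Gamma')\in\{1,3\}$.

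The main obstacle is this chord-counting step. A shortest negative cycle of length $l$ is chordless, up to chords whose resulting subcycles are both positive, and one must check that for $l\ge5$ this forces at least roughly $l(l-3)/2$ missing edges. Combined with the edge bound from Lemma~\ref{lemma:10}, that would contradict $\lambda_1>n-\tfrac52$. For $l=4$ at most four negative edges are possible and oddness gives $3$.
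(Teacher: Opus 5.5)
\textbf{There is a genuine gap: the final counting step does not produce a contradiction.} Your plan is to show $l\le 4$ and then combine Claim~\ref{claim:3} with parity. That skeleton matches the paper. The paper, however, obtains $l\le4$ from the Cai--Zhou bound (Lemma~\ref{lemma:11}): if $l\ge5$ then $\Gamma'$ is $C_{3,4}^-$-free, so $\lambda_1(\Gamma')\le\gamma_n<n-3$, which is a contradiction. Replacing that lemma by elementary counting can work, but not with the count you propose.

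First, a small correction: there are no exceptional chords. If $xy$ is a chord of $\mathcal{C}$, the two cycles it creates are both shorter than $\mathcal{C}$, and their sign product is $\sigma(\mathcal{C})\sigma(xy)^2=-1$. So one of them is negative, and $\mathcal{C}$ must be an induced cycle. That gives only $\frac{l(l-3)}{2}$ non-edges, for example $5$ when $l=5$.

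On the other side, the density bound allows many more. Use Lemma~\ref{lemma:10}, or Hong's bound $\lambda_1(G)\le\sqrt{2e(G)-n+1}$ for connected $G$ (Claim~\ref{claim:2}), together with $\lambda_1(G)\ge\lambda_1(\Gamma')>n-\frac52$. This only shows that $G$ has fewer than $\frac{3n}{2}-\frac{21}{8}$ non-edges. So for small $l$, already $l=5$, your step ``that would contradict $\lambda_1>n-\frac52$'' fails.

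\textbf{The missing idea is to count non-edges between $\mathcal{C}$ and the vertices outside it.} Let $z\notin V(\mathcal{C})$ have neighbours $x_1,\dots,x_k$ on $\mathcal{C}$, with $k\ge 2$, listed in cyclic order. Consider the $k$ cycles formed by $z$ and the arcs of $\mathcal{C}$ between consecutive $x_i$. Their sign product is $\sigma(\mathcal{C})=-1$, so one of them is negative. By minimality of $l$, that negative cycle's arc has length at least $l-2$. The remaining arcs then total at most $2$, so $k\le 3$. Hence the number of non-edges is at least
\[
\frac{l(l-3)}{2}+(n-l)(l-3)=(l-3)\Bigl(n-\frac{l}{2}\Bigr)\ge 2n-5 .
\]
The last inequality holds for $5\le l\le n$ and $n\ge 9$, because the middle expression is concave in $l$ and its endpoint values are $2n-5$ and $\frac{n(n-3)}{2}$. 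Since $2n-5>\frac{3n}{2}-\frac{21}{8}$, this contradicts the density bound.

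With this addition your route becomes a self-contained proof of $l\le4$ that avoids Lemma~\ref{lemma:11}; without it the argument is incomplete. You were right to drop the sign-reversal idea. Reversing negative edges can flip the sign of a Hamiltonian cycle, which is why the paper reverses edges only in Claim~\ref{claim:7}, after Claim~\ref{claim:6}.
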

	\begin{proof}
		Recall that $\mathcal{C}$ is the shortest negative cycle of $\Gamma'$. We first show that $l \le 4$. Suppose, for a contradiction, that $l \ge 5$. Then $\Gamma'$ is both $C_3^{-}$-free  and  $C_4^{-}$-free. Hence $\lambda_1(\Gamma') \le \rho(\Gamma') \le \gamma_n$, by Lemma~\ref{lemma:11}. Consider 
		\[
		f(x) = x^3 - (n-6)x^2 - 3(n-4)x - n + 3, \quad n > 8.
		\]
		Its derivative is
		\[
		f'(x) = 3x^2 - 2(n-6)x - 3(n-4).
		\]
		The critical points are the roots of \(f'(x)=0\):
		\[
		x = \frac{2(n-6) \pm \sqrt{4(n-6)^2 + 36(n-4)}}{6}
		= \frac{(n-6) \pm \sqrt{n(n-3)}}{3}.
		\]
		Let \(x_2 = \frac{(n-6) + \sqrt{n(n-3)}}{3}\) be the larger critical point.
		Since the leading coefficient of \(f'\) is positive, \(f\) is increasing for \(x \le x_1\) and for \(x \ge x_2\), and decreasing for \(x_1 \le x \le x_2\), where \(x_1\) is the smaller root. We claim that \(x_2 < n-3\) for all \(n > 8\). Indeed,
		\[
		x_2 - (n-3) = \frac{\sqrt{n(n-3)} - (2n-3)}{3}.
		\]
		For \(n > 8\), we have \(2n-3 > 0\), and
		\[
		(2n-3)^2 - n(n-3) = 3n^2 - 9n + 9 = 3(n^2 - 3n + 3) > 0,
		\]
		hence \(2n-3 > \sqrt{n(n-3)}\), which implies \(x_2 < n-3\). Therefore, for all \(x \ge n-3\), we have \(x \ge x_2\), so \(f\) is strictly increasing on the domain \(x \ge n-3\). 
		It is easy to verify that for \(n > 8\),
		\[
		f(n-3) = 2n - 6 > 0.
		\]
		Since \(f(\gamma_n)=0\) and \(f\) is strictly increasing for all \(x \ge n-3\), we conclude that \(\gamma_n < n-3\) for \(n > 8\). Consequently $\lambda_1(\Gamma') \le \gamma_n < n-3$, a contradiction. Thus $l \le 4$. Since all negative edges lie on the shortest negative cycle \(\mathcal{C}\), and  \(l \in \{3, 4\}\), the number of negative edges is odd and at most three. Therefore, \(e^{-}(\Gamma') \in \{1, 3\}\).

	\end{proof}

	\begin{claim}\label{claim:6}
		All edges of every Hamiltonian cycle are positive.
	\end{claim}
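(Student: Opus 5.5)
We argue by contradiction. Suppose some Hamiltonian cycle $H$ of $G$ contains a negative edge of $\Gamma'$. Since $\Gamma'$ has no negative Hamiltonian cycle, $H$ is positive and so contains an even, nonzero number of negative edges. By Claim~\ref{claim:11} we have $e^-(\Gamma')\in\{1,3\}$. Hence $e^-(\Gamma')=3$, and $H$ contains exactly two of the three negative edges, say $e$ and $f$. All three negative edges lie on the shortest negative cycle $\mathcal{C}$ (Claim~\ref{claim:3}), whose length is $l\in\{3,4\}$. Let $g=uv$ be the third negative edge.

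The goal is to produce a Hamiltonian cycle containing $g$ but neither $e$ nor $f$. Such a cycle has exactly one negative edge, so it is negative, which is the desired contradiction. It suffices to find a Hamiltonian $u$--$v$ path in $G-\{e,f\}$ that avoids $g$ (automatic for a path, since $u,v$ are its endpoints), and then close it with $g$. So the key step is to show that the underlying graph $G'=G-e-f$ is Hamilton-connected.

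\textbf{Edge count.} From $\lambda_1(G)\ge\lambda_1(\Gamma')>n-\frac52$ and Lemma~\ref{lemma:10}, I will extract a lower bound on $m=e(G)$ as a function of $\delta=\delta(G)$. Solving
\[
n-\tfrac52<\tfrac{\delta-1+\sqrt{(\delta+1)^2+4(2m-\delta n)}}{2}
\]
for $m$ should give $m\ge\binom{n-2}{2}+8$ comfortably for $\delta\ge 3$, roughly $m\ge\binom{n}{2}-\tfrac{n}{2}-O(\delta)$. Therefore $e(G')\ge\binom{n-2}{2}+6$.

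\textbf{Minimum degree case $\delta(G)\ge 3$.} Here I must check that $\delta(G')\ge 3$. If deleting $e,f$ drops some vertex below degree $3$, that vertex has small degree in $G$. The same Hong--Shu--Fang computation, applied with that small $\delta$, then forces the remaining graph to be essentially complete. I would handle this subcase by inspection: either $G$ is forced into a structure incompatible with the spectral bound, or the required path can be routed by hand inside the near-clique. Once $\delta(G')\ge 3$ is settled, Lemma~\ref{lem:ZhouWang} shows $G'$ is Hamilton-connected. The listed exceptions are excluded as follows. The finitely many small-$n$ exceptions are ruled out by comparing their index with $n-\frac52$. The family $K_3\vee(K_{n-5}\cup I_2)$ is excluded because its edge count is below our bound once $e,f$ are added back, or by noting that in it a suitable path through $g$ can still be exhibited directly.

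\textbf{Minimum degree case $\delta(G)=2$.} Let $w$ be a vertex of degree $2$ with $N(w)=\{a,b\}$. Every Hamiltonian cycle uses $wa$ and $wb$. The spectral bound together with the same edge-maximality arguments as in Claim~\ref{claim:4} (adding positive edges strictly raises $\lambda_1$ by Lemma~\ref{lemma:6} and Claim~\ref{claim:1}) forces $G-w$ to be complete. In a graph of the form ``$K_{n-1}$ plus a vertex of degree $2$'', Hamiltonian cycles are exactly a Hamiltonian $a$--$b$ path in $K_{n-1}$ avoiding the edge $ab$, completed through $w$. I would then do a short case analysis on the position of the negative triangle or quadrilateral $\mathcal{C}$ relative to $a,b,w$, in each case writing down explicitly a Hamiltonian cycle through exactly one (or all three) of the negative edges.

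\textbf{Main obstacle.} I expect the main difficulty to be the degree bookkeeping after deleting $e,f$: guaranteeing $\delta(G')\ge 3$, and excluding the exceptional graphs of Lemma~\ref{lem:ZhouWang} for small $n$ such as $n=9,10,12$. This is where explicit computations of indices against $n-\frac52$ will be needed.
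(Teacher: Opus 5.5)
Your reduction is sound. If some Hamiltonian cycle $H$ meets a negative edge, then $e^-(\Gamma')=3$ and $H$ carries exactly two negative edges $e,f$. A Hamiltonian cycle through the third negative edge $g$ alone would then be negative. However, the route through Hamilton-connectedness of $G-e-f$ does not go through as written, and the hardest steps are left to ``inspection''.

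(i) In the triangle case $e=xz$ and $f=yz$ share $z$, and in the quadrilateral case two of $xy,yz,zw$ may share a vertex. So $d_{G-e-f}(z)=d_G(z)-2$ may be $1$ or $2$, and Lemma~\ref{lem:ZhouWang} is unavailable. Lemma~\ref{lemma:10} cannot repair this. It gives only an edge count, which gets weaker, not stronger, as $\delta$ decreases, and that count still allows linearly many non-edges away from $z$.

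(ii) The edge count is not ``comfortable''. For $\delta=3$, Lemma~\ref{lemma:10} gives $2m\ge(\rho-1)^2+3n-4$. From $\rho>n-\frac52$ this yields only $m>\binom{n}{2}-\frac{3n}{2}+\frac{33}{8}$, not roughly $\binom{n}{2}-\frac n2$. That falls short of $\binom{n-2}{2}+8$ for $9\le n\le 11$. Even with the exact value $\lambda_1(\Gamma_1)\approx 6.6361$ at $n=9$ you get only $m\ge 28$, whereas $e(G-e-f)\ge\binom{7}{2}+6$ requires $m\ge 29$.

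(iii) The exceptions of Lemma~\ref{lem:ZhouWang} would be exceptions for $G-e-f$, whose index is not bounded below by $n-\frac52$. You would have to treat each of them with two edges added back in every possible position.

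(iv) For $\delta(G)=2$, completeness of $G-w$ does not follow as in Claim~\ref{claim:4}. There, a degree-one vertex means no Hamiltonian cycle exists at all, so adding positive edges away from it keeps the graph admissible. With a degree-two vertex and three negative edges, adding positive edges inside $V\setminus\{w\}$ can create Hamiltonian cycles through exactly one negative edge. The perturbed graph then leaves the class, and maximality says nothing. The paper proves $\Gamma'-w$ complete only after $e^-(\Gamma')=1$ and $N(w)=\{v_1,v_2\}$ are known, when every Hamiltonian cycle is forced through $wv_1,wv_2$.

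The missing idea is to apply maximality locally to $H$ instead of proving a global Hamiltonicity property. The negative edge not on $H$ is a chord of $H$, and it closes a negative cycle that avoids some negative edge:
\begin{itemize}
\item For the triangle $xyz$ with $xz,yz\in E(H)$, take the chord $xy$ together with the $x$--$y$ arc of $H$ not through $z$, which is entirely positive.
\item In the quadrilateral case the same chord-plus-positive-arc construction works, except when the chord is $yz$ and $xy,zw$ lie on different $y$--$z$ arcs $P_1\ni xy$ and $P_2\ni zw$. There $yx\cup xw\cup(P_2-zw)$ is the required negative cycle.
\end{itemize}
Now delete a negative edge not on this cycle. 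This keeps the graph unbalanced, and it cannot create a negative Hamiltonian cycle, since deleting edges only destroys Hamiltonian cycles. By Lemma~\ref{lemma:6}\,(ii) and Claim~\ref{claim:1} it strictly increases $\lambda_1$, contradicting the maximality of $\Gamma'$. This is how the paper argues, and it needs no edge counting, degree bookkeeping, or exceptional graphs.
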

	\begin{proof}
		The assertion is immediate when \(e^{-}(\Gamma') = 1\). Suppose that \(e^{-}(\Gamma') = 3\), and let \(H\) be a Hamiltonian cycle of \(G\). Since \(\Gamma'\) contains no negative Hamiltonian cycle, \(H\) contains either zero or two negative edges.
		
		First suppose that the shortest negative cycle is a triangle \(xyzx\). Then all three edges \(xy, yz, zx\) are negative. If \(H\) contains negative edges, it contains exactly two of them; after relabeling, assume that \(xz, yz \in E(H)\), whereas \(xy \notin E(H)\). The edge \(xy\) is therefore a chord of \(H\). The two edges \(xz\) and \(zy\) form one \(x\)-\(y\) path of \(H\), and the complementary \(x\)-\(y\) path \(P\) is entirely positive. Hence $xy \cup P$ is a negative cycle. Then, we can construct a new signed graph $\Gamma^*$ from $\Gamma'$ by deleting the negative edge $xz$ or $yz$. Without loss of generality, we deleting the negative  edge $yz$. Clearly, $\Gamma^*$ is also unbalanced signed graph contains no negative Hamiltonian cycle and 
		\begin{align*}
			\lambda_1(A(\Gamma^*))-\lambda_1(A(\Gamma'))
			&\ge X^{T}\bigl(A(\Gamma^*)-A(\Gamma')\bigr)X \\
			&= 2x_y x_z \ge 0.
		\end{align*}
		If \(\lambda_1(A(\Gamma^*))=\lambda_1(A(\Gamma'))\), then \(X\) is also an eigenvector of \(A(\Gamma^*)\) corresponding to \(\lambda_1(A(\Gamma^*))\). By \hyperref[lemma:6ii]{(ii)} of Lemma~\ref{lemma:6}, we obtain that \(x_y=x_z=0\), which contradicts Claim ~\ref{claim:1}. Hence, $\lambda_1(\Gamma^*) > \lambda_1(\Gamma')$, a contradiction. Now suppose that the shortest negative cycle is $C = xyzwx$, where \(xy, yz, zw\) are negative and \(wx\) is positive. If \(H\) contains negative edges, it contains exactly two of \(xy, yz, zw\); the third one is a chord of \(H\).
		
		If the chord is \(xy\) or \(zw\), the other two negative edges lie on the same path determined by the chord. The chord together with the other, entirely positive path forms a negative cycle. Deleting one of the two negative edges on the first path preserves this negative cycle and gives the required contradiction.
		
		Suppose next that the chord is \(yz\). If \(xy\) and \(zw\) lie on the same \(y\)-\(z\) path of \(H\), the preceding argument applies. Otherwise, let \(P_1\) be the \(y\)-\(z\) path containing \(xy\), and let \(P_2\) be the \(y\)-\(z\) path containing \(zw\). Then \(P_2 - zw\) is a positive \(y\)-\(w\) path. Consequently, $yx \cup xw \cup (P_2 - zw)$ is a negative cycle. Then, we can construct a new signed graph $\Gamma^*$ from $\Gamma'$ by deleting the negative edge $yz$.  Clearly, $\Gamma^*$ is also unbalanced signed graph contains no negative Hamiltonian cycle and 
		\begin{align*}
			\lambda_1(A(\Gamma^*))-\lambda_1(A(\Gamma'))
			&\ge X^{T}\bigl(A(\Gamma^*)-A(\Gamma')\bigr)X \\
			&= 2x_i x_j \ge 0.
		\end{align*}
		If \(\lambda_1(A(\Gamma^*))=\lambda_1(A(\Gamma'))\), then \(X\) is also an eigenvector of \(A(\Gamma^*)\) corresponding to \(\lambda_1(A(\Gamma^*))\). By \hyperref[lemma:6ii]{(ii)} of Lemma~\ref{lemma:6}, we obtain that \(x_i=x_j=0\), which contradicts Claim ~\ref{claim:1}. Thus, $\lambda_1(\Gamma^*) > \lambda_1(\Gamma')$, a contradiction.
	\end{proof}
	
	\begin{claim}\label{claim:7}
		$\Gamma'$ contains exactly one negative edge.
	\end{claim}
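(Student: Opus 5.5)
By Claim~\ref{claim:11}, $e^-(\Gamma')\in\{1,3\}$, so it suffices to rule out $e^-(\Gamma')=3$. Assume this case holds. By Claim~\ref{claim:3}, all three negative edges lie on the shortest negative cycle $\mathcal{C}$, and $l\in\{3,4\}$. The plan is to reverse the signs of two of the three negative edges. The resulting $\Gamma^*$ has a single negative edge, which still lies on a negative cycle, namely $\mathcal{C}$ with the two signs flipped. By \hyperref[lemma:6iii]{(iii)} of Lemma~\ref{lemma:6}, $\lambda_1(\Gamma^*)\ge\lambda_1(\Gamma')$. Equality would force the relevant coordinates of $X$ (at least three distinct vertices) to vanish, contradicting Claim~\ref{claim:1}. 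So $\lambda_1(\Gamma^*)>\lambda_1(\Gamma')$. This contradicts the extremality of $\Gamma'$, provided $\Gamma^*$ is still unbalanced and contains no negative Hamiltonian cycle.

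Unbalancedness is immediate, since $\Gamma^*$ contains a cycle with exactly one negative edge. The real work is choosing which two edges to flip so that no Hamiltonian cycle of $G$ uses the remaining negative edge $e$. By Claim~\ref{claim:6}, every Hamiltonian cycle of $G$ consists of positive edges of $\Gamma'$. Hence every Hamiltonian cycle of $G$ avoids all three negative edges of $\Gamma'$. In particular it avoids $e$, whichever of the three we keep. Every Hamiltonian cycle of $\Gamma^*$ therefore has all edges positive in $\Gamma^*$, hence is positive. So any choice of two edges works, and the argument is uniform: flip $xy,yz$ when $\mathcal{C}$ is a triangle $xyzx$, and flip $xy,yz$ when $\mathcal{C}=xyzwx$ with $xy,yz,zw$ negative.

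For the strictness I expect to write $\lambda_1(\Gamma^*)-\lambda_1(\Gamma')\ge X^{\top}(A(\Gamma^*)-A(\Gamma'))X=4(x_xx_y+x_yx_z)\ge 0$. In the equality case $X$ is also an eigenvector of $A(\Gamma^*)$. Comparing the eigen-equations at $y$ in $\Gamma'$ and $\Gamma^*$ gives $x_x+x_z=0$, so $x_x=x_z=0$, two zero coordinates. This contradicts Claim~\ref{claim:1}. This is exactly the pattern used in Claim~\ref{claim:4}.

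The main obstacle is making sure Claim~\ref{claim:6} is genuinely strong enough, i.e.\ that it gives "every Hamiltonian cycle is entirely positive" in the three-negative-edge case, rather than only "contains zero or two negative edges". If that claim's proof only covered some configurations, I would instead argue directly. Suppose a Hamiltonian cycle $H$ of $G$ contained the kept edge $e$. Then $H$ is negative in $\Gamma^*$, and it must have contained zero or two negative edges in $\Gamma'$. In the two-edge case, the third negative edge is a chord producing a shorter negative cycle, so deleting a negative edge off that cycle increases $\lambda_1$, as in Claim~\ref{claim:6}. That is again a contradiction.
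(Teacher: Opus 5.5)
Your proposal is correct and follows essentially the paper's proof. Assume three negative edges, reverse two adjacent ones on $\mathcal{C}$, and obtain a strict increase of $\lambda_1$ from the Rayleigh quotient $4(x_xx_y+x_yx_z)$ together with Claim~\ref{claim:1}; you also make explicit what the paper leaves implicit, namely that Claim~\ref{claim:6} is what guarantees that $\Gamma^*$ still has no negative Hamiltonian cycle.
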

	\begin{proof}
	    Suppose, for a contradiction, that \(\Gamma'\) has more than one negative edge. By Claim~\ref{claim:11}, it has exactly three. Let $e_2=v_2v_3$ and $e_3=v_1v_3$ be two negative edges of $\Gamma'$. Then we can construct a new signed graph $\Gamma^*$ from $\Gamma'$ by reversing the signs of $e_2$ and $e_3$. After this operation, the original shortest negative cycle contains exactly one negative edge. Thus $\Gamma^*$ is also unbalanced signed graph contains no negative Hamiltonian cycle and 
	    \begin{align*}
	    	\lambda_1(A(\Gamma^*))-\lambda_1(A(\Gamma'))
	    	&\ge X^{T}\bigl(A(\Gamma^*)-A(\Gamma')\bigr)X \\
	    	&= 4(x_1 +x_2 )x_3 \ge 0.
	    \end{align*}
	    If \(\lambda_1(A(\Gamma^*))=\lambda_1(A(\Gamma'))\), then \(X\) is also an eigenvector of \(A(\Gamma^*)\) corresponding to \(\lambda_1(A(\Gamma^*))\). By \hyperref[lemma:6iii]{(iii)} of Lemma~\ref{lemma:6}, we obtain that \(x_1=x_2=x_3=0\), which contradicts Claim ~\ref{claim:1}. Thus, $\lambda_1(\Gamma^*) > \lambda_1(\Gamma')$, a contradiction.  
	\end{proof}
	
	Let $v_1v_2$ be the unique negative edge of $\Gamma'$.
	
	\begin{claim}\label{claim:8}
		$\delta(\Gamma') = 2$.
	\end{claim}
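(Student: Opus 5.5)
Since Claim~\ref{claim:4} already gives $\delta(\Gamma')\ge 2$, it remains to rule out $\delta(\Gamma')\ge 3$. We argue by contradiction: assume $\delta(G)\ge 3$ and show that $\Gamma'$ then contains a negative Hamiltonian cycle. The tool is Hamilton-connectedness. By Claim~\ref{claim:2}, $G$ is connected. Moreover,
\[
\lambda_1(G)\ge \lambda_1(\Gamma')\ge\lambda_1(\Gamma_1)>n-\tfrac52 ,
\]
and $\bigl(n-\tfrac52\bigr)^2=(n-3)^2+(n-3)+\tfrac14\ge (n-3)^2+10$ as soon as $n\ge 13$. For $n\ge 13$, Lemma~\ref{lemma:9} therefore shows that $G$ is Hamilton-connected.

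Given Hamilton-connectedness, the contradiction is immediate. Let $v_1v_2$ be the unique negative edge (Claim~\ref{claim:7}). Take a Hamiltonian path $P$ in $G$ from $v_1$ to $v_2$. It cannot use the edge $v_1v_2$, since its endpoints are $v_1$ and $v_2$ and $n>2$. So all edges of $P$ are positive, and $P+v_1v_2$ is a Hamiltonian cycle with exactly one negative edge. This is a negative Hamiltonian cycle, contradicting the choice of $\Gamma'$ (equivalently, contradicting Claim~\ref{claim:6}). Hence $\delta(\Gamma')=2$.

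The main obstacle is the range $9\le n\le 12$, where $\bigl(n-\tfrac52\bigr)^2<(n-3)^2+10$ and Lemma~\ref{lemma:9} does not apply directly. Here I would switch to the edge version, Lemma~\ref{lem:ZhouWang}, and obtain an edge lower bound from the spectral one. By Lemma~\ref{lemma:10}, with $\delta\ge 3$ and $\rho=\lambda_1(G)>n-\tfrac52$, the bound $\rho\le \frac{\delta-1+\sqrt{(\delta+1)^2+4(2m-\delta n)}}{2}$ is increasing in $m$. Solving it for $m$ gives roughly $2m\ge \rho^2-(\delta-1)\rho+\delta(n-1)$. Since the right-hand side is decreasing in $\delta$ when $\rho$ is large, one has to check carefully that the relevant range of $\delta$ gives a uniform bound. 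I expect this to yield $e(G)\ge\binom{n-2}{2}+6$ for $9\le n\le 12$. If it does not, a direct check of the few small cases should close the gap.

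Lemma~\ref{lem:ZhouWang} then says $G$ is Hamilton-connected unless $G$ is one of the listed exceptions, namely $K_3\vee(K_{n-5}\cup I_2)$ and the sporadic graphs for $n=9,10,12$. For each exception I would show $\lambda_1<n-\tfrac52$, ruling it out. For $K_3\vee(K_{n-5}\cup I_2)$ I would use an equitable quotient matrix and Lemma~\ref{lemma:1}, as in Lemma~\ref{lemma:3}. For the sporadic graphs, which include joins such as $K_4\vee(K_2\cup I_3)$, $K_5\vee I_5$ and $K_6\vee I_6$, I would use direct quotient computations. Once the exceptions are excluded, $G$ is Hamilton-connected and the same $v_1$–$v_2$ path argument finishes the proof.
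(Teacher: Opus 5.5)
Your proposal is correct and follows essentially the paper's route: Lemma~\ref{lemma:9} for large $n$; for small $n$, Lemma~\ref{lemma:10} gives $e(G)\ge\binom{n-2}{2}+6$, Lemma~\ref{lem:ZhouWang} applies, and the exceptions are excluded spectrally; the $v_1$--$v_2$ Hamiltonian path then closes the cycle. The only difference is that the paper sharpens the estimate to $\lambda_1(\Gamma_1)>n-\frac{19}{8}$, so Lemma~\ref{lemma:9} already covers $n\ge 11$, whereas your coarser $n-\frac52$ forces you to also handle $n=11,12$; this works, since $\lambda_1(K_3\vee(K_{n-5}\cup I_2))<n-\frac52$ for those $n$ and $\lambda_1(K_6\vee I_6)=9<9.5$.

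Your worry about monotonicity in $\delta$ is backwards. Since $\rho\le n-1$, the bound $2m\ge\rho^2+\rho+\delta(n-1-\rho)$ is non-decreasing in $\delta$, so substituting $\delta=3$ is legitimate. It gives $2m>(n-\frac72)^2+3n-4$, which exceeds $2\binom{n-2}{2}+10$ for every $n\ge 8$.
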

	\begin{proof}
		
		Suppose for contradiction that \(\delta(\Gamma') \ge 3\). Define 
		\[
		f(x) = x^3 + (5 - n)x^2 + (8 - 3n)x + 2n - 8.
		\]
		Note that \(\lambda_1(\Gamma_1)\) is the largest real root of \(f(x) = 0\). For every \(n > 8\), direct verification shows 
		\[
		f\left(n - \frac{19}{8}\right) = \frac{-192n^2 + 2384n - 6243}{512}< 0 \quad \text{and} \quad f(n-2) =4(n-3)> 0.
		\]
		So \(f(x) = 0\) has a real root \(\lambda^* \in \left(n - \frac{19}{8}, n-2\right)\). Consequently,
		\[
		\lambda_1(\Gamma') \ge \lambda_1(\Gamma_1) \ge \lambda^* > n - \frac{19}{8}\quad\text{for all }n > 8.
		\]  
		
		\noindent If \(n \ge 11\), then 
		\[
		\lambda_1(G) \ge \lambda_1(\Gamma') > n - \frac{19}{8} > \sqrt{(n-3)^2 + 10}.
		\]
		By Lemma~\ref{lemma:9}, $G$ is Hamilton-connected. So there exists a Hamiltonian path between \(v_1\) and \(v_2\), and consequently \(\Gamma'\) contains a negative Hamiltonian cycle, a contradiction. Hence \(\delta(\Gamma') = 2\). Now suppose that $8 < n \le 10$, and \(\delta(G) \ge 3\). Let $m = e(G)$, $\rho = \rho(G)$.
			
			\noindent By Lemma~\ref{lemma:10}, we have
			\[
			2m \ge \rho^2 + \rho + \delta(n - 1 - \rho).
			\]
			Since \(\rho \le n - 1\) and \(\delta \ge 3\),
			\[
			2m \ge \rho^2 + \rho + 3(n - 1 - \rho) = (\rho - 1)^2 + 3n - 4.
			\]
			Since
			\[
			\rho \ge \lambda_1(\Gamma') > n - \frac{5}{2},
			\]
			we obtain
			\[
			2m > \left(n - \frac{7}{2}\right)^2 + 3n - 4.
			\]
			Moreover
			\[
			\left(n - \frac{7}{2}\right)^2 + 3n - 4 - 2\left(\binom{n-2}{2} + 5\right) = \frac{4n - 31}{4} > 0.
			\]
			Therefore
			\[
			m \ge \binom{n-2}{2} + 6.
			\]
			By Lemma~\ref{lem:ZhouWang}, \(G\) is Hamilton-connected unless \(G\) belongs to its list of exceptions.

			\begin{table}[h]
				\centering
				\caption{Polynomial factors and numerical values for the listed graphs.}
				\label{tab:polynomial-values}
				\begin{tabular}{c l l l}
					\toprule
					\(n\) & Graph & \(p_H(x)\) & \(\lambda_1\) (4 d.p.) \\
					\midrule
					9 & \(\Gamma_1\) & \(x^3 - 4x^2 - 19x + 10\) & 6.6361 \\
					9 & \(K_4 \vee (K_2 \cup I_3)\) & \(x^2 - 7x + 4\) & 6.3723 \\
					9 & \(K_3 \vee (K_4 \cup I_2)\) & \(x^3 - 5x^2 - 12x + 18\) & 6.4308 \\
					10 & \(\Gamma_1\) & \(x^3 - 5x^2 - 22x + 12\) & 7.6657 \\
					10 & \(K_5 \vee I_5\) & \(x^2 - 4x - 25\) & 7.3852 \\
					10 & \(K_4 \vee (K_{1,4} \cup K_1)\) & \(x^4 - 3x^3 - 28x^2 - 20x + 16\) & 7.2196 \\
					10 & \(K_3 \vee K_{2,5}\) & \(x^3 - 2x^2 - 31x - 40\) & 7.1325 \\
					10 & \(K_4 \vee (K_{1,3} \cup K_2)\) & \(x^4 - 4x^3 - 24x^2 + 4x + 39\) & 7.1655 \\
					10 & \(K_3 \vee (K_5 \cup I_2)\) & \(x^3 - 6x^2 - 13x + 24\) & 7.3272 \\
					\bottomrule
				\end{tabular}
			\end{table}
			
			Let \(p_H(x)\) be the characteristic polynomial of the equitable quotient matrix of these exceptional graphs and \(\Gamma_1\), and let \(\lambda_1\) denote the largest eigenvalue. Note that from the data computed in the table above, we can conclude that none of the exceptions can satisfy $\lambda_1(G) \ge \lambda_1(\Gamma_1)$. Therefore, the underlying graph of the extremal graph \(\Gamma'\) cannot be any of these exceptional graphs. Hence \(G\) is Hamilton-connected, which again yields a negative Hamiltonian cycle, a contradiction. Finally, for all \(n > 8\), $\delta(G) = 2$.

	\end{proof}
	\vspace{-1em}
	
	A chord of a cycle C is an edge between two non-consecutive vertices of the cycle. Note that the unique negative edge is a chord of every Hamiltonian cycle of $G$. Let $w$ be a vertex of degree two. 
	
	\begin{claim}\label{claim:9}
		\(N_G(w) = \{v_1, v_2\}\).
	\end{claim}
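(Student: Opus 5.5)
We argue by contradiction: suppose $w$ is a vertex of degree two with $N_G(w)\neq\{v_1,v_2\}$, where $v_1v_2$ is the unique negative edge of $\Gamma'$ (Claim~\ref{claim:7}). The plan is to show that $G$ then contains a Hamiltonian cycle through $v_1v_2$. Such a cycle has exactly one negative edge, so it would be a negative Hamiltonian cycle in $\Gamma'$, a contradiction. Write $N_G(w)=\{a,b\}$; at least one of $a,b$ lies outside $\{v_1,v_2\}$.

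\emph{First step: $G-w$ is complete.} Suppose two vertices $y,z\ne w$ are non-adjacent. Add the positive edge $yz$ to obtain $\Gamma^*$. Then $\Gamma^*$ is still unbalanced, since the negative edge $v_1v_2$ remains and the old negative cycle $\mathcal{C}$ survives. Moreover $w$ still has degree two in $\Gamma^*$, so every Hamiltonian cycle of $\Gamma^*$ must use $aw$ and $wb$. The danger is that $\Gamma^*$ acquires a negative Hamiltonian cycle, i.e.\ one that uses both $yz$ and $v_1v_2$. If that can happen, we switch tactics and use it directly. Such a cycle gives a Hamiltonian path of $G-w$ from $a$ to $b$ through $v_1v_2$ that uses $yz$, and we would then modify this path inside $G$ instead. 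This step is delicate, so I would rather establish completeness by a cleaner route. We have $e(G-w)$ large because $\lambda_1>n-\frac52$: using Lemma~\ref{lemma:10} with $\delta=2$ gives $2m\ge \rho^2+\rho+2(n-1-\rho)>(n-\tfrac52)^2-(n-\tfrac52)+2n-2$, hence $e(G)\ge\binom{n-1}{2}+2-\epsilon$ roughly. So $G-w$ misses at most about one edge of $K_{n-1}$. With that, a direct Hamiltonian-path argument in the near-complete graph $G-w$ suffices; completeness is not needed.

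\emph{Main step: build the path.} We need a Hamiltonian $a$–$b$ path in $G-w$ that contains the edge $v_1v_2$. In the case $\{a,b\}\cap\{v_1,v_2\}=\emptyset$, take a path $a\to\cdots\to v_1v_2\to\cdots\to b$ that covers the remaining $n-5\ge4$ vertices. In the case $a=v_1$ and $b\ne v_2$, start the path $v_1v_2$ and then traverse all other vertices, ending at $b$. In a graph on $n-1\ge8$ vertices that misses at most one edge (or a few), such orderings obviously exist, since we can choose the interior ordering to avoid the missing pair(s). Closing the path with $bw$ and $wa$ gives a Hamiltonian cycle through $v_1v_2$, which is negative. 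This contradiction yields $N_G(w)=\{v_1,v_2\}$.

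The main obstacle is quantifying how close $G-w$ is to complete. Hong–Shu–Fang (Lemma~\ref{lemma:10}) with $\delta=2$ and $\rho>n-\tfrac52$ must give enough edges, but the bound may only guarantee that a few edges are missing, say at most $n-5$ or so, rather than one. If it is weaker, I would fall back on the perturbation argument of the first step. Adding positive edges inside $G-w$ strictly increases $\lambda_1$ by Claim~\ref{claim:1} and Lemma~\ref{lemma:6}. Every time this would create a negative Hamiltonian cycle, we extract the Hamiltonian path structure described above. I would then iterate until $G-w$ is complete, at which point the path construction is immediate.
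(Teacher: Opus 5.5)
Your overall strategy (find a Hamiltonian cycle of $G$ through $v_1v_2$) is legitimate, but the step meant to deliver it fails. From Lemma~\ref{lemma:10} with $\delta=2$ you correctly get $2m\ge\rho^2-\rho+2n-2$. Plugging in $\rho>n-\frac52$, however, gives only
\[
2m>n^2-4n+\tfrac{27}{4},\qquad\text{i.e.}\qquad e(G)>\binom{n-1}{2}+2-\Bigl(\frac n2-\frac38\Bigr),
\]
and not $\binom{n-1}{2}+2-\epsilon$. Since $e(G-w)=e(G)-2$, the graph $G-w$ may miss up to $\lfloor\frac{n-1}{2}\rfloor$ edges, a number that grows with $n$. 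Even the exact threshold $\rho\ge\lambda_1(\Gamma_1)\approx n-2-\frac{4}{n+1}$ still allows several missing edges. These can be concentrated at $a$, $b$, $v_1$ or $v_2$, so your sentence ``such orderings obviously exist'' is precisely the unproved part.

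Closing it needs a genuine Hamiltonicity theorem. One option is a P\'{o}sa-type result: if $d(u)+d(v)\ge N+2$ for all non-adjacent pairs in an $N$-vertex graph, then every two-edge linear forest lies on a Hamiltonian cycle. You would apply it to $G-w+ab$ with the forest $\{ab,v_1v_2\}$. The corrected count gives degree sums at least $\frac32N-3$ there ($N=n-1$), which suffices only for $n\ge 11$. For $n=9,10$ you would need the numerical value of $\lambda_1(\Gamma_1)$, which gives at most three missing edges in $G-w$. You also never rule out $w\in\{v_1,v_2\}$. The paper handles this by noting that $v_1v_2$ is a chord of every Hamiltonian cycle, so $d(v_1),d(v_2)\ge 3$.

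Your fallback does not work either. Adding a positive edge $yz$ to $\Gamma'$ and invoking extremality only shows that $G+yz$ has a Hamiltonian cycle through $yz$ and $v_1v_2$. Equivalently, $G$ has a Hamiltonian $y$--$z$ path containing $v_1v_2$. You cannot ``iterate'': $\Gamma'+yz$ already contains a negative Hamiltonian cycle, so it lies outside the admissible family and extremality says nothing about it. The usual rotation argument that would turn such a $y$--$z$ path into a Hamiltonian cycle of $G$ through $v_1v_2$ requires a degree-sum bound like $d(y)+d(z)\ge n+1$. That is again exactly the quantitative information you do not have.

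The paper avoids all of this, and never needs $G-w$ to be nearly complete:
\begin{itemize}
\item It fixes a Hamiltonian cycle $H=v_1a_1\cdots a_pv_2b_q\cdots b_1v_1$ (Claim~\ref{claim:5}). It observes that $a_1b_q$ and $b_1a_p$ must be non-edges, since otherwise $G$ would have a Hamiltonian $v_1$--$v_2$ path.
\item It picks one of these, $rs$, not incident with $w$. It then bounds $\lambda_1(\Gamma')$ above by the index of the supergraph $S$ obtained by adding positive edges. $S$ is complete away from $w$ except for $rs$, and $N_S(w)=\{v_1,a,b\}$.
\item The equitable quotients $Q_5$ and $Q_6$ give $\lambda_1(S)<\lambda_1(\Gamma_1)$.
\end{itemize}
The contradiction is spectral and is measured against the exact threshold $\lambda_1(\Gamma_1)$, not against $n-\frac52$.
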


		\begin{proof}
			
		Since \(v_1v_2\) is not contained in any Hamiltonian cycle, fix a Hamiltonian cycle and write it as
		\[
		H = v_1 a_1 a_2 \cdots a_p v_2 b_q b_{q-1} \cdots b_1 v_1, \qquad p, q \ge 1.
		\]
		If \(a_1 b_q \in E(G)\), then
		\[
		v_1 b_1 b_2 \cdots b_q a_1 a_2 \cdots a_p v_2
		\]
		is a Hamiltonian \(v_1\)-\(v_2\) path. Together with the negative edge \(v_1v_2\), it forms a negative Hamiltonian cycle, a contradiction. Hence
		\begin{equation} \tag{2.1}\label{2.1}
			a_1 b_q \notin E(G).
		\end{equation}
		Similarly,
		\begin{equation} \tag{2.2}\label{2.2}
			b_1 a_p \notin E(G).
		\end{equation}
		Since \(v_1v_2\) is a chord of \(H\), \(d_G(v_1), d_G(v_2) \ge 3\). Therefore \(w \notin \{v_1, v_2\}\). Its two neighbors are precisely its two neighbors on \(H\).
		
		Suppose, contrary to the claim, that \(N_G(w) \neq \{v_1, v_2\}\). First, we assert that at least one of the two nonedges in \eqref{2.1} and \eqref{2.2} is not incident with \(w\). Indeed, if both were incident with \(w\), then either \(p=1\) and \(w = a_1 = a_p\), or \(q=1\) and \(w = b_1 = b_q\). In either case, the two neighbors of \(w\) on \(H\) would be \(v_1\) and \(v_2\), contrary to our assumption. Fix such a nonedge and denote it by \(rs\).
		
		We construct a new signed graph \(S\) from \(\Gamma'\) as follows. All original edges and their signs are retained. Among the vertices other than \(w\), every missing edge except \(rs\) is added as a positive edge. The edges incident with \(w\) are specified below. Recall that we assume \(N_G(w) \neq \{v_1, v_2\}\), so \(w\) is adjacent to neither $v_1$ nor $v_2$, or to exactly one of them. If \(w\) is adjacent to neither \(v_1\) nor \(v_2\), write \(N_{\Gamma'}(w) = \{a, b\}\) and add the positive edge \(wv_1\). If \(w\) is adjacent to exactly one of them, say \(v_1\), write \(N_{\Gamma'}(w) = \{v_1, a\}\), and we choose an arbitrary vertex \(b \in V(\Gamma') \setminus \{v_1, v_2, w, a, r, s\}\) (which is possible since \(n > 8\)) and add the positive edge \(wb\). In either case, \(N_S(w) = \{v_1, a, b\}\). Furthermore, among the vertices different from \(w\), the only missing edge is \(rs\). If \(w\) is adjacent to neither \(v_1\) nor \(v_2\), then set of newly added positive edges is
		\[
		E^+_{\mathrm{new}}
		=
		\{uv\notin E(\Gamma'): u,v\neq w,\ uv\neq rs\}
		\cup
		\{wv_1\}.
		\] 
		If \(w\) is adjacent to exactly one of \(v_1, v_2\), then set of newly added positive edges is
		\[
		E^+_{\mathrm{new}}
		=
		\{uv\notin E(\Gamma'): u,v\neq w,\ uv\neq rs\}
		\cup
		\{wb\}.
		\] 
	 In either case, the Rayleigh quotient difference can be written in the same form
		\[
		\lambda_1(S)-\lambda_1(\Gamma')
		\ge
		X^\top\bigl(A(S)-A(\Gamma')\bigr)X
		=
		2\sum_{uv\in E^+_{\mathrm{new}}}x_ux_v
		\ge 0.
		\]
		If \(\lambda_1(A(S))=\lambda_1(A(\Gamma'))\), then \(X\) is also an eigenvector of \(A(S)\) corresponding to \(\lambda_1(A(S))\). By \hyperref[lemma:6i]{(i)} of Lemma~\ref{lemma:6}, we at least get \(x_u=x_v=0\), which contradicts Claim ~\ref{claim:1}. Therefore, \(\lambda_1(\Gamma')<\lambda_1(S)\), a contradiction.
		
		If \(w\) is adjacent to neither \(v_1\) nor \(v_2\), then \(w\) lies in the interior of one of the two \(v_1\)-\(v_2\) arcs of \(H\), and both of its neighbors \(a, b\) lie on that same arc. Each of the two forced nonedges has one endpoint on each \(v_1\)-\(v_2\) arc. Hence neither forced nonedge can have both endpoints in \(\{a, b\}\). If \(w\) is adjacent to exactly one of \(v_1, v_2\), the newly chosen vertex \(b\) is outside \(\{r, s\}\), so again the intersection has size at most one. Therefore, by the cyclic ordering of \(H\),
		\[
		|\{r, s\} \cap \{a, b\}| \le 1.
		\]
		
			\noindent \textbf{Case 1.} \(\{r, s\} \cap \{a, b\} = \varnothing\).
			
			With respect to the partition
			\[
			\{w\}, \quad \{a, b\}, \quad \{v_1\}, \quad \{v_2\}, \quad \{r, s\}, \quad R, \qquad |R| = n-7,
			\]
			the quotient matrix is
			\[
			Q_5 = \begin{bmatrix}
				0 & 2 & 1 & 0 & 0 & 0 \\
				1 & 1 & 1 & 1 & 2 & n-7 \\
				1 & 2 & 0 & -1 & 2 & n-7 \\
				0 & 2 & -1 & 0 & 2 & n-7 \\
				0 & 2 & 1 & 1 & 0 & n-7 \\
				0 & 2 & 1 & 1 & 2 & n-8
			\end{bmatrix}.
			\]
			Its characteristic polynomial is
			\begin{align*}
				P_{Q_5}(x) &= x^6 + (7-n)x^5 + (18-5n)x^4 - (2n+8)x^3 \\
				&\quad + (17n-95)x^2 + (13n-55)x + 76 - 10n.
			\end{align*}
			
		\noindent	By construction, every vertex in a displayed cell has the same signed row sum toward each other cell. Hence the displayed partition is equitable, and Lemma~\ref{lemma:1} applies. By Lemma~\ref{lemma:1}, all eigenvalues of \(Q_5\) belong to the spectrum of \(A(S)\), and by subtracting a suitable scalar multiple of the all-ones block from each subblock of \(A(S)\), we derive the simplified block matrix
			\[
			A_5=
			\begin{bmatrix}
				0 & \bm{0} & 0 & 0 & \bm{0} & \bm{0}\\
				\bm{0} & -I_2 & \bm{0} & \bm{0} & \bm{0} & \bm{0}\\
				0 & \bm{0} & 0 & 0 & \bm{0} & \bm{0}\\
				0 & \bm{0} & 0 & 0 & \bm{0} & \bm{0}\\
				\bm{0} & \bm{0} & \bm{0} & \bm{0} & \bm{0} & \bm{0}\\
				\bm{0} & \bm{0} & \bm{0} & \bm{0} & \bm{0} & -I_{n-7}
			\end{bmatrix}.
			\]
			Since \(\lambda_1(S)>0\), \(\operatorname{Spec}(A_5)=\{-1^{[n-5]},0^{[5]}\}\) and \(\lambda_1(S)=\lambda_1(Q_5)\).
			
			\noindent Recall that
			\[
			P_{Q_1}(x)=(x-1)\bigl(x^3+(5-n)x^2+(8-3n)x+2n-8\bigr).
			\]
			Define
			\[
			\begin{aligned}
				P_3(x)&=x^2P_{Q_1}(x)-P_{Q_5}(x)\\
				&= -3x^5 + (3n-15)x^4 + (7n-8)x^3\\
				&\qquad + (103-19n)x^2 + (55-13n)x + 10n - 76.
			\end{aligned}
			\]
			Substituting \(x=n-\frac{5}{2}\) into \(P_3(x)\) and writing \(\eta=n-9\ge 0\), we obtain
			\[
			P_3\!\left(n-\frac{5}{2}\right)
			=-\frac{\eta^4}{2}-\frac{45\eta^3}{2}-\frac{539\eta^2}{2}
			-\frac{9347\eta}{8}-\frac{49459}{32}<0.
			\]
			
			\noindent The derivative of \(P_3(x)\) is
			\[
			\begin{aligned}
				P_3'(x)&=-15x^4+(12n-60)x^3+(21n-24)x^2+(206-38n)x+(55-13n)\\
				&=\bigl(12x^3+21x^2-38x-13\bigr)n
				+\bigl(-15x^4-60x^3-24x^2+206x+55\bigr).
			\end{aligned}
			\]
			For \(x\ge 5\), the coefficient of \(n\) satisfies \(12x^3+21x^2-38x-13>0\). Hence \(P_3'(x)\) is strictly increasing in \(n\). Since \(x\ge n-3\), i.e., \(n\le x+3\),
			\[
			\begin{aligned}
				P_3'(x)&\le P_3'(x)\big|_{n=x+3}\\
				&=-3x^4-3x^3+x^2+79x+16.
			\end{aligned}
			\]
			Define \(H(x)=-3x^4-3x^3+x^2+79x+16\). Writing \(t=x-5\ge 0\), we have
			\[
			H(5+t)=-3t^4-63t^3-494t^2-1636t-1814<0,
			\]
			so \(H(x)<0\) for all \(x\ge 5\). Consequently, \(H(x)\le H(5)<0\) for all \(x\ge n-3\ge 5\). This means that
			\[
			P_3'(x)\le H(x)<0.
			\]
			Thus \(P_3(x)\) is strictly decreasing for all \(x\ge n-3\). Recall that \(\lambda_1=\lambda_1(\Gamma_1)\) is the largest real root of \(P_{Q_1}(x)\). Since \(\lambda_1>n-\frac{5}{2}>n-3\), we get \(P_3(x)<0\) for all \(x\ge\lambda_1\). Since \(P_{Q_1}(x)\ge 0\) for \(x\ge \lambda_1\), it follows that \(P_{Q_5}(x)=x^2P_{Q_1}(x)-P_3(x)>0\) for \(x\ge \lambda_1\). Thus \(P_{Q_5}\) has no real zero in \([\lambda_1,\infty)\). As all other eigenvalues of \(Q_5\) are strictly less than \(\lambda_1\), we obtain \(\lambda_1(S)=\lambda_1(Q_5)<\lambda_1(\Gamma_1)\), a contradiction.
			
			\noindent \textbf{Case 2.} \(|\{r, s\} \cap \{a, b\}| = 1\).
			
			Without loss of generality, assume the missing edge is \(ar\). With respect to the partition
			\[
			\{w\}, \quad \{a\}, \quad \{b\}, \quad \{v_1\}, \quad \{v_2\}, \quad \{r\}, \quad R, \qquad |R| = n-6,
			\]
			the quotient matrix is
			\[
			Q_6 = \begin{bmatrix}
				0 & 1 & 1 & 1 & 0 & 0 & 0 \\
				1 & 0 & 1 & 1 & 1 & 0 & n-6 \\
				1 & 1 & 0 & 1 & 1 & 1 & n-6 \\
				1 & 1 & 1 & 0 & -1 & 1 & n-6 \\
				0 & 1 & 1 & -1 & 0 & 1 & n-6 \\
				0 & 0 & 1 & 1 & 1 & 0 & n-6 \\
				0 & 1 & 1 & 1 & 1 & 1 & n-7
			\end{bmatrix}.
			\]
			
			\noindent Its characteristic polynomial is
			\begin{align*}
				P_{Q_6}(x) &= x^7 + (7-n)x^6 + (18-5n)x^5 - (2n+8)x^4 \\
				&\quad + (17n-90)x^3 + (14n-52)x^2 + (55-8n)x + 13 - 3n.
			\end{align*}
			
			\noindent By construction, every vertex in a displayed cell has the same signed row sum toward each other cell. Hence the displayed partition is equitable, and Lemma~\ref{lemma:1} applies. By Lemma~\ref{lemma:1}, all eigenvalues of \(Q_6\) belong to the spectrum of \(A(S)\), and by subtracting a suitable scalar multiple of the all-ones block from each subblock of \(A(S)\), we derive the simplified block matrix
			\[
			A_6=
			\begin{bmatrix}
				0 & 0 & 0 & 0 & 0 & 0 & 0\\
				0 & 0 & 0 & 0 & 0 & 0 & \bm{0}\\
				0 & 0 & 0 & 0 & 0 & 0 & \bm{0}\\
				0 & 0 & 0 & 0 & 0 & 0 & \bm{0}\\
				0 & 0 & 0 & 0 & 0 & 0 & \bm{0}\\
				0 & 0 & 0 & 0 & 0 & 0 & \bm{0}\\
				0 & \bm{0} & \bm{0} & \bm{0} & \bm{0} & \bm{0} & -I_{n-6}
			\end{bmatrix}.
			\]
			Since \(\lambda_1(S)>0\), \(\operatorname{Spec}(A_6)=\{-1^{[n-6]},0^{[6]}\}\) and \(\lambda_1(S)=\lambda_1(Q_6)\).
			
			\noindent Recall that
			\[
			P_{Q_1}(x)=(x-1)\bigl(x^3+(5-n)x^2+(8-3n)x+2n-8\bigr).
			\]
			Define
			\[
			\begin{aligned}
				P_4(x)&=x^3P_{Q_1}(x)-P_{Q_6}(x)\\
				&= -3x^6 + (3n-15)x^5 + (7n-8)x^4 + (98-19n)x^3 \\
				&\quad + (52-14n)x^2 + (8n-55)x + 3n - 13.
			\end{aligned}
			\]
			Substituting \(x=n-\frac{5}{2}\) into \(P_4(x)\) and writing \(\eta=n-9\ge 0\), we obtain
			\[
			P_4\!\left(n-\frac{5}{2}\right)
			=-\frac{\eta^5}{2}-\frac{103\eta^4}{4}-\frac{1687\eta^3}{4}
			-\frac{24357\eta^2}{8}-\frac{319329\eta}{32}-\frac{761151}{64}<0.
			\]
			
			\noindent The derivative of \(P_4(x)\) is
			\[
			\begin{aligned}
				P_4'(x)&=-18x^5+(15n-75)x^4+(28n-32)x^3
				+(294-57n)x^2+(104-28n)x+(8n-55)\\
				&=\bigl(15x^4+28x^3-57x^2-28x+8\bigr)n
				+\bigl(-18x^5-75x^4-32x^3+294x^2+104x-55\bigr).
			\end{aligned}
			\]
			For \(x\ge 5\), the coefficient of \(n\) satisfies \(15x^4+28x^3-57x^2-28x+8>0\). Hence \(P_4'(x)\) is strictly increasing in \(n\). Since \(x\ge n-3\), i.e., \(n\le x+3\),
			\[
			\begin{aligned}
				P_4'(x)&\le P_4'(x)\big|_{n=x+3}\\
				&=-3x^5-2x^4-5x^3+95x^2+28x-31.
			\end{aligned}
			\]
			Define \(D(x)=-3x^5-2x^4-5x^3+95x^2+28x-31\). Writing \(t=x-5\ge 0\), we have
			\[
			D(5+t)=-3t^5-77t^4-795t^3-4030t^2-9772t-8766<0,
			\]
			so \(D(x)<0\) for all \(x\ge 5\). Consequently, \(D(x)\le D(5)<0\) for all \(x\ge n-3\ge 5\). This means that
			\[
			P_4'(x)\le D(x)<0.
			\]
			Thus \(P_4(x)\) is strictly decreasing for all \(x\ge n-3\). Recall that \(\lambda_1=\lambda_1(\Gamma_1)\) is the largest real root of \(P_{Q_1}(x)\). Since \(\lambda_1>n-\frac{5}{2}>n-3\), we get \(P_4(x)<0\) for all \(x\ge\lambda_1\). Since \(P_{Q_1}(x)\ge 0\) for \(x\ge \lambda_1\), it follows that \(P_{Q_6}(x)=x^3P_{Q_1}(x)-P_4(x)>0\) for \(x\ge \lambda_1\). Thus \(P_{Q_6}\) has no real zero in \([\lambda_1,\infty)\). As all other eigenvalues of \(Q_6\) are strictly less than \(\lambda_1\), we obtain \(\lambda_1(S)=\lambda_1(Q_6)<\lambda_1(\Gamma_1)\), a contradiction. Therefore, \(N_G(w) = \{v_1, v_2\}\).
		
		\begin{claim}
			\(\Gamma' - w = (K_{n-1}, \sigma)\).
		\end{claim}
		
		\begin{proof}
			Otherwise, assume that \(\Gamma' - w \neq (K_{n-1}, \sigma)\). Then \(\Gamma' - w\) contains at least two vertices that are non-adjacent. Without loss of generality, assume that \(v_i \nsim v_j\). Then we can construct a new signed graph \(\Gamma^*\) from \(\Gamma'\) by adding a positive edge \(v_i v_j\). Clearly, \(\Gamma^*\) is also unbalanced signed graph contains no negative Hamiltonian cycle and 
			\begin{align*}
				\lambda_1(A(\Gamma^*))-\lambda_1(A(\Gamma'))
				&\ge X^{T}\bigl(A(\Gamma^*)-A(\Gamma')\bigr)X \\
				&= 2x_i x_j \ge 0.
			\end{align*}
			If \(\lambda_1(A(\Gamma^*))=\lambda_1(A(\Gamma'))\), then \(X\) is also an eigenvector of \(A(\Gamma^*)\) corresponding to \(\lambda_1(A(\Gamma^*))\). By \hyperref[lemma:6i]{(i)} of Lemma~\ref{lemma:6}, we obtain that \(x_i=x_j=0\), which contradicts Claim ~\ref{claim:1}. Thus, $\lambda_1(\Gamma^*) > \lambda_1(\Gamma')$, a contradiction. 
		\end{proof}
		Therefore, \(\Gamma' \cong \Gamma_1\), which means that \(\Gamma\) is switching isomorphic to \(\Gamma_1\). This completes the proof.
		
	\end{proof}
	
	\section{Proof of Theorem \ref{thm:edge}}\label{3}
	
	In order to prove Theorem~\ref{thm:edge}, we first present some key lemmas and a useful remark that will be used in the subsequent arguments. Let $\Gamma^{s,t}$ (see Fig.~\ref{fig:signed_graph}) $(s+t = n-2)$ be the signed graph obtained from an all-positive clique $(K_{n-2}, +)$ with $V(K_{n-2}) = \{u_1, \dots, u_s, v_1, \dots, v_t\}$ ($s, t \ge 1$) and two isolated vertices $u$ and $v$ by adding the negative edge $uv$ and the positive edges in $\{uu_1, \dots, uu_s\}$ and $\{vv_1, \dots, vv_t\}$. 
	
	\begin{figure}[ht]
		\centering
		\begin{tikzpicture}[
			scale=0.8,          % 整体缩小
			transform shape,
			% 【修改2】全局设置顶点标签的字体大小
			every label/.style={font=\large},
			% 节点样式
			vertex/.style={circle, draw, fill=black, minimum size=6pt, inner sep=0pt},
			bigellipse/.style={ellipse, draw, thick, minimum width=3.5cm, minimum height=4.5cm},
			solidedge/.style={draw, thick},
			negedge/.style={draw=red, thick}
			]
			
			% 【修改1】拉大椭圆距离：x坐标从 -2.5/2.5 改为 -3.5/3.5
			\node[bigellipse] (Ks) at (-3.5, 0) {};
			\node[bigellipse] (Kt) at ( 3.5, 0) {};
			
			% 同步调整底部标签的x坐标
			\node at (-3.5, -2.8) {\((K_s, +)\)};
			\node at ( 3.5, -2.8) {\((K_t, +)\)};
			
			% 绘制所有的边
			% u 和 v 之间的红色负边（顶部顶点略向外扩以保持连线美观）
			\draw[negedge] (-2.0, 4.0) -- ( 2.0, 4.0);
			
			% u 到 u1, us 的边（x坐标同步调整）
			\draw[solidedge] (-2.0, 4.0) -- (-3.5, 1.2);
			\draw[solidedge] (-2.0, 4.0) -- (-3.5, -1.2);
			
			% v 到 v1, vs 的边
			\draw[solidedge] ( 2.0, 4.0) -- ( 3.5, 1.2);
			\draw[solidedge] ( 2.0, 4.0) -- ( 3.5, -1.2);
			
			% u1 到 v1, vs 的边
			\draw[solidedge] (-3.5, 1.2) -- ( 3.5, 1.2);
			\draw[solidedge] (-3.5, 1.2) -- ( 3.5, -1.2);
			
			% us 到 v1, vs 的边
			\draw[solidedge] (-3.5, -1.2) -- ( 3.5, 1.2);
			\draw[solidedge] (-3.5, -1.2) -- ( 3.5, -1.2);
			
			% 绘制所有的顶点
			\node[vertex, label=above:\(u\)] (u) at (-2.0, 4.0) {};
			\node[vertex, label=above:\(v\)] (v) at ( 2.0, 4.0) {};
			
			\node[vertex, label=left:\(u_1\)] (u1) at (-3.5, 1.2) {};
			\node[vertex, label=left:\(u_s\)] (us) at (-3.5, -1.2) {};
			
			\node[vertex, label=right:\(v_1\)] (v1) at ( 3.5, 1.2) {};
			\node[vertex, label=right:\(v_t\)] (vt) at ( 3.5, -1.2) {};
			
		\end{tikzpicture}
		\caption{The signed graph \(\Gamma^{s,t}\) (red lines indicate negative edges).}
		\label{fig:signed_graph}
	\end{figure}
	
	\begin{lemma}[\cite{Wang2024}]\label{lem:1}
		Let $\Gamma = (G, \sigma)$ be a connected unbalanced signed graph of order $n$. If $\Gamma$ is $C_3^-$-free, then
		\[
		e(\Gamma) \le \frac{n(n-1)}{2} - (n-2),
		\]
		with equality holding if and only if $\Gamma \sim \Gamma^{s,t}$, where $s+t = n-2$ and $s, t \ge 1$.
	\end{lemma}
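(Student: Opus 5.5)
Let $\mathcal{C}=c_1c_2\cdots c_lc_1$ be a shortest negative cycle of $\Gamma$; it exists because $\Gamma$ is unbalanced, and $l\ge 4$ because $\Gamma$ is $C_3^-$-free. The plan is to count non-edges of $G$ against $\mathcal{C}$. We will show that there are at least $n-2$ non-edges, with equality forcing $l=4$ and a rigid local structure, and then identify that structure as $\Gamma^{s,t}$ up to switching.

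\textbf{Step 1 (non-edges inside $\mathcal{C}$).} First, $\mathcal{C}$ is induced. A chord $c_ic_j$ splits $\mathcal{C}$ into two shorter cycles. Each edge of $\mathcal{C}$ lies in exactly one of them and the chord lies in both, so the product of their signs is $\sigma(\mathcal{C})=-1$. Hence one of them is a shorter negative cycle, which contradicts minimality. This gives $l(l-3)/2$ non-edges among the vertices of $\mathcal{C}$.

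\textbf{Step 2 (non-edges between $\mathcal{C}$ and the rest).} Fix $x\notin V(\mathcal{C})$ with $k\ge 2$ neighbours on $\mathcal{C}$.
\begin{itemize}
\item These neighbours cut $\mathcal{C}$ into arcs of lengths $a_1,\dots,a_k$ with $\sum a_i=l$.
\item Each arc together with $x$ closes a cycle of length $a_i+2$. The product of the signs of these $k$ cycles is again $\sigma(\mathcal{C})=-1$, since each edge at $x$ is used twice. So some such cycle is negative.
\item By minimality this negative cycle has length at least $l$, so $a_i\ge l-2$ for that arc. The remaining arcs then have total length at most $2$, so $k\le 3$.
\item If $k=3$, the neighbours of $x$ are three consecutive vertices of $\mathcal{C}$. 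If $k=2$, they are at distance $2$ or at distance $1$ on $\mathcal{C}$.
\end{itemize}
In every case $x$ has at least $l-3$ non-neighbours on $\mathcal{C}$. Combining with Step 1, the number of non-edges is at least
\[
\frac{l(l-3)}{2}+(n-l)(l-3)=\frac{(l-3)(2n-l)}{2}.
\]
This expression is concave in $l$. At $l=4$ it equals $n-2$, while at $l=5$ and at $l=n$ it strictly exceeds $n-2$ for $n\ge 5$. So the number of edges is at most $\binom n2-(n-2)$, and equality forces $l=4$. It also forces every outside vertex to be adjacent to exactly three consecutive vertices of $\mathcal{C}$, and every pair of vertices outside $\mathcal{C}$ to be adjacent.

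\textbf{Step 3 (equality case; expected main obstacle).} By Lemma~\ref{lemma:5} we may switch so that all edges are positive except one edge of $\mathcal{C}$, say $c_1c_2$.
\begin{itemize}
\item Each outside vertex $x$ misses exactly one vertex of $\mathcal{C}$. Its three triangles through $\mathcal{C}$ must be positive. This pins down the signs of its edges to $\mathcal{C}$ (after possibly switching at $x$ alone, which does not affect other constraints).
\item Next we use the completeness of $G-V(\mathcal{C})$ together with $C_3^-$-freeness on triangles $xyc_i$. The goal is to show that no outside vertex misses $c_3$ or $c_4$; otherwise a negative triangle or a negative $4$-cycle through $c_1c_2$ would be shorter or would conflict with the signs fixed above. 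So each outside vertex misses $c_1$ or $c_2$.
\item Setting $u=c_1$ and $v=c_2$, the vertices missing $v$ together with $c_4$ form the $u_i$'s, and those missing $u$ together with $c_3$ form the $v_j$'s. The result is exactly $\Gamma^{s,t}$ with $s,t\ge 1$.
\end{itemize}
I expect the sign bookkeeping in this last step to be the delicate part. I would handle it by a case check on which vertex of $\mathcal{C}$ each of two adjacent outside vertices misses, and for each case exhibit a negative triangle. Connectedness is used only to rule out degenerate configurations. Finally, a direct check shows that $\Gamma^{s,t}$ is $C_3^-$-free: every triangle avoids the edge $uv$, because $u$ and $v$ have no common neighbour. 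It also attains the bound, since its only non-edges are the $n-2$ pairs $uv_j$ and $vu_i$.
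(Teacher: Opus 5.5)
The paper does not prove this lemma (it is quoted from Wang, Hou and Li), so your argument has to stand on its own. Steps~1 and~2 are correct. The shortest negative cycle is chordless, and every outside vertex has at most three neighbours on $\mathcal{C}$. The concavity count then gives the bound. In the equality case it forces $l=4$, exactly one non-neighbour on $\mathcal{C}$ for each outside vertex, and $G-V(\mathcal{C})$ complete; connectedness is never needed.

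\textbf{The gap is in Step~3.} Lemma~\ref{lemma:5} does not let you switch so that every edge except $c_1c_2$ is positive. Switching only lets you prescribe signs on a spanning tree, e.g.\ make $c_2c_3$, $c_3c_4$, $c_4c_1$ positive. Having a single negative edge overall means frustration index $1$, which is part of what must be proved.

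Your second bullet actually uses only this weaker normalization, since the signs at $x$ still have to be pinned down. Under that normalization, your target claim that every outside vertex misses $c_1$ or $c_2$ is false. Take $\Gamma^{s,t}$ itself with $t\ge 2$ and $\mathcal{C}=uvv_1u_1$, and switch at $v$. The only negative edge of $\mathcal{C}$ is now $vv_1$, so $c_1=v$, $c_2=v_1$, $c_3=u_1$, $c_4=u$, and $v_2$ misses $c_4$. Which edge of $\mathcal{C}$ carries the minus sign is an artifact of switching, so it cannot determine which vertices are missed. Two smaller slips: with $l=4$ no negative $4$-cycle can be ``shorter''; and each outside vertex lies on two triangles and one $4$-cycle through $\mathcal{C}$, not three triangles.

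\textbf{What you can prove instead.} The switching-invariant statement you need is that no two outside vertices miss opposite vertices of $\mathcal{C}$. Suppose $x$ misses $c_1$ and $y$ misses $c_3$.
\begin{enumerate}
\item The positive triangles $xc_2c_3$ and $xc_3c_4$ make $xc_2c_3c_4x$ positive.
\item The positive triangles $yc_4c_1$ and $yc_1c_2$ make $yc_4c_1c_2y$ positive.
\item Multiplying these two cycles with $\mathcal{C}$ gives $\sigma(xc_2yc_4x)=-1$.
\item But the positive triangles $xyc_2$ and $xyc_4$ give $\sigma(xc_2yc_4x)=+1$. Here you use that $G-V(\mathcal{C})$ is complete, so $xy$ is an edge.
\end{enumerate}
Hence all missed vertices lie in $\{c_i,c_{i+1}\}$ for a single edge $c_ic_{i+1}$ of $\mathcal{C}$ (indices modulo $4$).

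Only now switch so that $c_ic_{i+1}$ is the negative edge of $\mathcal{C}$. The two triangles at each outside vertex $x$ then use only positive edges of $\mathcal{C}$, so switching at $x$ makes its three edges to $\mathcal{C}$ positive. For two outside vertices $x,y$, the triangle $xyc_{i+2}$ then forces $\sigma(xy)=+1$. The result is $\Gamma^{s,t}$ with $u=c_i$ and $v=c_{i+1}$, where $c_{i-1}$ and $c_{i+2}$ guarantee $s,t\ge 1$.
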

	
	\begin{lemma}[\cite{Ore1963}]\label{lem:2}
		Let $G$ be a graph of order $n$. If 
		\[
		e(G) \ge \binom{n-1}{2} + 3,
		\]
		then $G$ is Hamilton-connected.
	\end{lemma}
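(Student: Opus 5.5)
The plan is to reduce the edge condition to an Ore-type degree-sum condition and then apply the Bondy--Chv\'atal closure machinery for Hamilton-connectedness. First, the hypothesis is possible only if $\binom{n-1}{2}+3\le\binom n2$, i.e.\ $n\ge 4$. Moreover, the complement $\overline G$ then has at most
\[
\binom n2-\binom{n-1}{2}-3=n-4
\]
edges. Let $u,v$ be nonadjacent. The vertex $u$ misses $n-1-d(u)$ edges and $v$ misses $n-1-d(v)$ edges, and the non-edge $uv$ is counted twice in this tally. Hence
\[
(n-1-d(u))+(n-1-d(v))-1\le n-4,
\]
that is, $d(u)+d(v)\ge n+1$ for every nonadjacent pair $u,v$. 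This degree-sum property is preserved when edges are added, since degrees only grow.

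Next I would invoke (and, if a self-contained argument is wanted, prove) the stability lemma: if $d_G(u)+d_G(v)\ge n+1$ for nonadjacent $u,v$ and $G+uv$ is Hamilton-connected, then $G$ is Hamilton-connected. Granting this, the rest is immediate. We add the missing edges one at a time; at each stage the current pair still has degree sum at least $n+1$. We finish at $K_n$, which is Hamilton-connected. Running the lemma backwards along this sequence then shows that $G$ is Hamilton-connected.

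For the stability lemma, fix distinct vertices $x,y$. We need a Hamiltonian $x$--$y$ path in $G$. Take a Hamiltonian $x$--$y$ path $P$ in $G+uv$; if it avoids $uv$ we are done. Otherwise write $P=x=p_1p_2\cdots p_n=y$ with $\{u,v\}=\{p_k,p_{k+1}\}$, say $u=p_k$ and $v=p_{k+1}$. The standard crossing argument then looks for an index $i$ with a neighbour of $u$ and a neighbour of $v$ positioned so that re-routing produces a Hamiltonian $x$--$y$ path in $G$. Concretely, this means $i<k$ with $up_{i+1},vp_i\in E$, or $i>k+1$ with $up_i,vp_{i-1}\in E$ (mirrored appropriately). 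We count the indices where such a pattern fails: the neighbours of $u$ and the shifted neighbours of $v$ live in a set of size at most $n-1$ once the positions of $x$ and $y$ are excluded. Pigeonhole with $d(u)+d(v)\ge n+1$ then forces a collision.

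The main obstacle is this counting. In the Hamiltonian-cycle version, degree sum $n$ suffices. Here the endpoints $x,y$ of the path are fixed and cannot be used as rotation points, and the edge $uv$ may even be incident with $x$ or $y$ (e.g.\ $u=x$). These degenerate positions each cost one unit in the count, which is exactly why the threshold is $n+1$. I would handle them by a short case split according to whether $\{u,v\}\cap\{x,y\}$ is empty, and if not, which endpoint is involved.
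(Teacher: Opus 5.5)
Your reduction is correct, and it is the standard derivation. The paper gives no proof of this lemma; it only cites Ore, whose edge theorem is the corollary of his degree-sum theorem (nonadjacent $u,v$ with $d(u)+d(v)\ge n+1$ implies Hamilton-connected), obtained via exactly your complement count $e(\overline G)\le n-4$. Replacing Ore's direct argument by the Bondy--Chv\'atal $(n+1)$-stability of Hamilton-connectedness and closing up to $K_n$ is also fine. Your observation that the degree-sum condition persists along the edge-adding sequence is what makes this work.

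The one step you made concrete, however, fails as written. Take $u=p_k$, $v=p_{k+1}$ and $i<k$. The edges $up_{i+1}$ and $vp_i$ do not give a Hamiltonian $x$--$y$ path. The edge $vp_i$ yields the $x$--$y$ path $p_1\cdots p_i\,p_{k+1}\cdots p_n$, and the segment $p_{i+1}\cdots p_k$ is left stranded; the edge $up_{i+1}$ can only close it into a cycle. Your right-hand pattern $up_i, vp_{i-1}$ has the same defect. You have carried over the crossing pattern from the cycle setting, where $u,v$ are the two ends of the path; here they are consecutive vertices of the path.

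The correct pattern on both sides is $up_i, vp_{i+1}\in E(G)$:
\begin{itemize}
\item for $1\le i\le k-2$, take $p_1\cdots p_i\,p_kp_{k-1}\cdots p_{i+1}\,p_{k+1}\cdots p_n$;
\item for $k+2\le i\le n-1$, take $p_1\cdots p_k\,p_ip_{i-1}\cdots p_{k+1}\,p_{i+1}\cdots p_n$.
\end{itemize}
With this pattern no case split is needed. Let $S=\{i: up_i\in E(G)\}\subseteq\{1,\dots,n\}\setminus\{k,k+1\}$ and $T=\{i: vp_{i+1}\in E(G)\}\subseteq\{0,\dots,n-1\}\setminus\{k-1,k\}$. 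Then $S\cup T\subseteq\{0,\dots,n\}\setminus\{k\}$, a set of size $n$ (not $n-1$ as you wrote). Hence $|S\cap T|\ge d(u)+d(v)-n\ge 1$. Moreover $S\cap T\subseteq\{1,\dots,k-2\}\cup\{k+2,\dots,n-1\}$ automatically, so every index in the intersection gives one of the two reroutings above. The cases $u=x$ or $v=y$ merely make one of these ranges empty. That this ambient set has size $n$, against $n-1$ in the cycle version, is exactly why the threshold is $n+1$.
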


	\begin{remark}\label{rem:one}
		An unbalanced signed graph has frustration index $1$ if and only if it is switching equivalent to a signed graph with exactly one negative edge. 
	\end{remark}
	\begin{proof}
		The sufficiency is obvious. Next, we prove the  necessity. Let $\Gamma = (G, \sigma)$ be a signed graph with frustration index $1$. Then there exists an edge $e$ such that $\Gamma - e$ is balanced. 
		Apply a sequence of switching operations to $\Gamma - e$ so that all its edges become positive. 
		Applying the same switching operations to the original graph $\Gamma$, we make every edge positive except possibly $e$. If $\sigma(e)\neq-1$ under this switching, then $\Gamma$ is balanced,  contradicting the assumption that $\Gamma$ is unbalanced. Thus, after these switching operations, $\Gamma$ is switching equivalent to a signed graph with exactly one negative edge and all other edges positive.
	\end{proof}

	\begin{proof}[\textbf{Proof of Theorem~\ref{thm:edge}}]
		Let $\Gamma$ be the extremal signed graph with $n$ vertices ($n \geqslant 7$) attaining the maximum number of edges. Then $\Gamma$ is indeed connected. Otherwise, for vertices $u$ and $v$ in different components, $\Gamma + uv$ is unbalanced, contains no negative Hamiltonian cycle, and has more edges than $\Gamma$, contradicting extremality. By Remark~\ref{rem:one}, let $\Gamma' = (G, \sigma')$ be a signed graph with exactly one negative edge that is switching equivalent to $\Gamma$, and $e(\Gamma)=e(\Gamma')$. Then $\Gamma'$ must contain at least one negative cycle, and we assume that the length of a shortest negative cycle of $\Gamma'$ is $\ell$. Note that $\Gamma_1$ is unbalanced and contains no negative Hamiltonian cycle, so
		\[
		e(\Gamma') \ge e(\Gamma_1) = \frac{n(n-1)}{2} - (n-3).\tag{1}\label{eq:edge-bound}
		\]
		If $\ell \ge 4$, then $\Gamma'$ is $C_3^-$-free. By Lemma~\ref{lem:1}, 
		\[
		e(\Gamma') \le \frac{n(n-1)}{2} - (n-2) < \frac{n(n-1)}{2} - (n-3),
		\]
		which contradicts the inequality \eqref{eq:edge-bound}. Hence, $\ell = 3$ and we assume that $uvwu$ is a negative triangle of $\Gamma'$ with $\sigma(uv) = -1$ and $\sigma(uw) = \sigma(vw) = +1$.
		Suppose $e(\Gamma') = \frac{1}{2}(n^2 - n) - q$. By the lower bound of the inequality \eqref{eq:edge-bound}, we have $q \le n-3$.
		
		\begin{claim}\label{cla:1}
			$q = n-3$.
		\end{claim}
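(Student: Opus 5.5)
Since $q \le n-3$ already follows from \eqref{eq:edge-bound}, I only need the reverse inequality $q \ge n-3$: an extremal $\Gamma'$ must have at least $n-3$ missing edges. I will argue by contradiction, assuming $q \le n-4$. Throughout I use the normal form from Remark~\ref{rem:one}: $uv$ is the unique negative edge, and $uvwu$ is a negative triangle.

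The key observation is that $\Gamma'$ contains no negative Hamiltonian cycle. Because $uv$ is the only negative edge, this means no Hamiltonian cycle of $G$ uses $uv$. Equivalently, $G - uv$ has no Hamiltonian $u$--$v$ path. So the whole problem reduces to an unsigned statement: find such a path whenever $G$ misses at most $n-4$ edges.

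\textbf{Degree case.} First suppose $\delta(G) \ge 3$. The graph $G - uv$ misses at most $q+1 \le n-3$ edges, so
\[
e(G-uv) \ge \binom{n}{2} - (n-3) = \binom{n-1}{2} + 2 .
\]
This falls exactly one short of Lemma~\ref{lem:2}, which needs $\binom{n-1}{2}+3$, so I cannot apply it to $G-uv$ directly. Instead I apply a closure argument to $G$ itself. The standard Bondy--Chv\'atal condition for Hamilton-connectedness is
\[
d(x)+d(y) \ge n+1 \quad \text{for all nonadjacent } x,y .
\]
With at most $n-4$ non-edges this holds for most nonadjacent pairs. The remaining pairs are controlled using $\delta \ge 3$.

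A cleaner alternative is Lemma~\ref{lem:ZhouWang}. Its hypothesis $e(G) \ge \binom{n-2}{2}+6$ is comfortably satisfied when $n \ge 7$, although Lemma~\ref{lem:ZhouWang} requires $n \ge 6$. Its exceptional graphs each miss far more than $n-4$ edges; for example, $K_3 \vee (K_{n-5} \cup I_2)$ misses $2(n-4)+1$. So the lemma makes $G$ Hamilton-connected. A Hamiltonian $u$--$v$ path in $G$ cannot use the edge $uv$, since its endpoints are $u$ and $v$. Adding $uv$ to that path then gives a negative Hamiltonian cycle, a contradiction.

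\textbf{Low-degree case.} Now suppose some vertex $x$ has $d(x) \le 2$. Connectivity and unbalancedness force $d(x) \ge 1$, and in fact $d(x) = 2$ will be needed.

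If $d(x) = 1$, the non-edges at $x$ alone number $n-2 > n-4$, a contradiction.

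If $d(x) = 2$, then $x$ already accounts for $n-3$ non-edges. Since $q \le n-4$ and $n-3 > n-4$, this is again a contradiction.

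Hence $\delta(G) \ge 3$ whenever $q \le n-4$, and the degree case applies. This yields $q \ge n-3$, and combined with \eqref{eq:edge-bound}, $q = n-3$.

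\textbf{Main obstacle.} The delicate step is the Hamilton-connectedness in the degree case. I must check that $n \ge 7$ is within the range of Lemma~\ref{lem:ZhouWang}, and that none of its small exceptions (at $n = 8, 9, 10, 12$) misses at most $n-4$ edges. For each exception the number of missing edges is easy to count; for example, $K_4 \vee I_4$ misses $6 > 4$. So I would tabulate these counts briefly.
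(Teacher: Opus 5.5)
Your argument is correct, but it takes a detour the paper avoids. The paper applies Lemma~\ref{lem:2} directly to $G$. If $q\le n-4$, then $e(G)\ge\binom{n}{2}-(n-4)=\binom{n-1}{2}+3$, which is exactly Ore's threshold. So $G$ is Hamilton-connected, and a Hamiltonian $u$--$v$ path together with $uv$ is a negative Hamiltonian cycle.

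You computed the edge count for $G-uv$ instead, found it one short of Lemma~\ref{lem:2}, and then rerouted through $\delta(G)\ge 3$ and Lemma~\ref{lem:ZhouWang}. But, as you note yourself later, a Hamiltonian $u$--$v$ path in $G$ never uses $uv$ (for $n\ge 3$). Hamilton-connectedness of $G$ is therefore all that is needed, and the one-line Ore argument already closes the claim.

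Your longer route does go through:
\begin{itemize}
\item a vertex of degree at most $2$ carries at least $n-3>n-4$ non-edges, so $\delta(G)\ge 3$;
\item $\binom{n}{2}-(n-4)=\binom{n-2}{2}+n+1\ge\binom{n-2}{2}+6$, so Lemma~\ref{lem:ZhouWang} applies;
\item every exceptional graph in that lemma misses more than $n-4$ edges.
\end{itemize}
One count is off: $K_3\vee(K_{n-5}\cup I_2)$ misses $2(n-5)+1=2n-9$ edges, not $2(n-4)+1$. Since $2n-9>n-4$ for $n\ge 6$, nothing breaks.

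Your side remark on the degree-sum condition is also inaccurate. You say it holds only for most nonadjacent pairs and that $\delta\ge 3$ handles the rest. In fact, for nonadjacent $x,y$ the non-edges at $x$ and at $y$ share only $xy$. Their numbers therefore sum to at most $q+1\le n-3$, which gives $d(x)+d(y)\ge n+1$ for every nonadjacent pair, with no appeal to $\delta\ge 3$. That is precisely why Lemma~\ref{lem:2} applies.

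In short, the paper's route uses a weaker classical lemma with no case analysis. Yours invokes a stronger lemma and must then check its exception list, gaining nothing here.
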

		\begin{proof}
			Otherwise, $q\leq n-4$. Then $e(\Gamma') \ge \frac{n(n-1)}{2} - (n-4)$ = $\frac{(n-1)(n-2)}{2}+3$. Thus, the underlying graph $G$ is Hamilton-connected by Lemma~\ref{lem:2}. And then, there must exist a Hamiltonian path between $u$ and $v$, which together with the negative edge $uv$ forms a negative Hamiltonian cycle, a contradiction. Hence, $q = n-3$.
		\end{proof}
		
		Claim~\ref{cla:1} shows that $e(\Gamma')  = \frac{n(n-1)}{2} - (n-3).$	
		
		\begin{claim}\label{cla:2}
			$\delta(\Gamma') = 2$.
		\end{claim}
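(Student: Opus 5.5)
We have to show $\delta(\Gamma')=2$, where $\Gamma'$ has exactly one negative edge $uv$, lies on the negative triangle $uvwu$, contains no negative Hamiltonian cycle, and has $e(\Gamma')=\frac{n(n-1)}{2}-(n-3)$. So the complement $\overline{G}$ has exactly $n-3$ edges. We prove the upper bound $\delta(\Gamma')\le 2$ and the lower bound $\delta(\Gamma')\ge 2$ separately.

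\emph{Upper bound via Hamilton-connectedness.} Suppose $\delta(G)\ge 3$. Then $e(G)=\binom{n}{2}-(n-3)$, and
\[
\binom{n}{2}-(n-3)-\binom{n-2}{2}-6=(2n-3)-(n-3)-6=n-6>0 \quad (n\ge 7).
\]
Hence $e(G)\ge \binom{n-2}{2}+6$, and $G$ is connected. By Lemma~\ref{lem:ZhouWang}, $G$ is Hamilton-connected unless it is one of the listed exceptions. A Hamiltonian $u$--$v$ path together with the negative edge $uv$ is a negative Hamiltonian cycle, a contradiction. It remains to rule out the exceptions by counting edges. The generic exception $K_3\vee(K_{n-5}\cup I_2)$ has complement $K_{1,n-5}\cup K_2\cup K_{2}$-type structure with $2(n-5)+1$ missing edges. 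This exceeds $n-3$ once $n>8$. For $n=7,8$ I would check it directly, and likewise each sporadic graph for $n=8,9,10,12$; their complement sizes (for example $K_4\vee I_4$ misses $6>5$) should all differ from $n-3$. The case $n=7$ is not covered by the exception list's special values, so only the generic family matters there: with $n=7$ it misses $5>4$ edges. The main obstacle is this bookkeeping over the exceptional list, especially the $n=8$ graphs $K_3\vee(K_{1,2}\cup K_2)$ and $K_2\vee K_{2,4}$. I will compute their complement edge counts one by one (they miss $7$ and $7$ edges respectively, versus $n-3=5$).

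\emph{Lower bound via extremality.} Suppose some vertex $x$ has degree at most $1$. By extremality, $\Gamma'$ is connected, so $d(x)=1$ and $x$ alone already misses $n-2>n-3$ edges, a contradiction. This settles $\delta\ge 2$ without any further argument.

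Combining the two bounds gives $\delta(\Gamma')=2$.
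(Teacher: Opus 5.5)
Your proof is correct and follows essentially the same route as the paper: for $\delta\ge 3$ you apply Lemma~\ref{lem:ZhouWang} and rule out its exceptional graphs by comparing their edge (equivalently, non-edge) counts with $e(\Gamma')=\binom{n}{2}-(n-3)$, and for $\delta\le 1$ you note that one vertex alone would already account for at least $n-2>q=n-3$ non-edges. One cosmetic slip: the complement of $K_3\vee(K_{n-5}\cup I_2)$ is $(K_2\vee I_{n-5})\cup I_3$, not a ``$K_{1,n-5}\cup K_2\cup K_2$-type'' graph; your count $2n-9$ is nevertheless right, and it already exceeds $n-3$ for every $n\ge 7$, so the separate checks at $n=7,8$ are unnecessary for this family.
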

		
		\begin{proof}
			Suppose for contradiction that this is not the case. If $\delta(\Gamma') \ge 3$, note that  
			$e( K_3 \vee (K_{n-5} \cup I_2))= \frac{(n-2)(n-3)}{2}+6$ and for $n \ge 7 $, $e(\Gamma')$ is strictly larger than the edge numbers of these exceptional graphs. The relevant edge counts are listed in the table below. By Lemma~\ref{lem:ZhouWang}, the underlying graph $G$ is Hamilton-connected, which implies the existence of a negative Hamiltonian cycle of $\Gamma'$, a contradiction. If $\delta(\Gamma') = 1$, then $n-2\leq q $, which also contradicts Claim~\ref{cla:1}. Thus $\delta(\Gamma') = 2$. 
		\end{proof}
		
		\begin{table}[htbp]
			\centering
			\caption{Edge counts for the exceptional graphs.}
			\label{tab:edge-counts}
			\begin{tabular}{ccc}
				\toprule
				\(n\) & Maximum edges of exceptional graphs & Edges in the equality case of this paper \\
				\midrule
				8  & 22 & 23 \\
				9  & 27 & 30 \\
				10 & 35 & 38 \\
				12 & 51 & 57 \\
				\bottomrule
			\end{tabular}
		\end{table}
		
		 Let $u'$ be a vertex of degree~$2$ with \(N_{\Gamma'}(u') = \{a, b\}\).
		
		\begin{claim}\label{cla:3}
			Let $uv$ be the unique negative edge of $\Gamma'$, and let $u'$ be any vertex of degree $2$ in $\Gamma'$. Then $\{a,b\}=\{u,v\}.$
		
		\end{claim}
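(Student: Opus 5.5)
We argue by contradiction, in the edge-count setting: $e(\Gamma')=\frac{n(n-1)}{2}-(n-3)$, $uv$ is the unique negative edge, and $\delta(\Gamma')=2$ by Claim~\ref{cla:2}. We use no eigenvector perturbations. Instead we compare non-edges.

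\textbf{Step 1: the structure of a Hamiltonian cycle.} By Claim~\ref{cla:2} and Lemma~\ref{lemma:2}-type reasoning, or directly by Ore-type edge bounds, the underlying graph $G$ is Hamiltonian. Indeed, $e(G)\ge\binom{n-1}{2}+1$ together with $\delta\ge 2$ already forces Hamiltonicity except for $K_1\vee(K_{n-3}\cup K_2)$ and $K_2\vee(K_{n-4}\cup I_2)$, and both of these have fewer edges than $e(\Gamma')$ when $n\ge 7$. If $G$ had a Hamiltonian $u$--$v$ path, closing it with $uv$ would give a negative Hamiltonian cycle. So every Hamiltonian cycle $H$ avoids $uv$, and $uv$ is a chord of $H$. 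Write, exactly as in Claim~\ref{claim:9},
\[
H=u\,a_1\cdots a_p\,v\,b_q\cdots b_1\,u,\qquad p,q\ge 1 .
\]
The argument there shows that $a_1b_q\notin E(G)$ and $b_1a_p\notin E(G)$. Since $uv$ is a chord, $d(u),d(v)\ge 3$, so $u'\notin\{u,v\}$, and $a,b$ are the neighbours of $u'$ on $H$.

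\textbf{Step 2: counting non-edges.} Suppose $\{a,b\}\neq\{u,v\}$. The vertex $u'$ alone accounts for $n-3$ non-edges of $G$, because $d(u')=2$. Since $G$ has exactly $n-3$ non-edges in total, every other pair of vertices is adjacent. Now look at the two forced non-edges $a_1b_q$ and $b_1a_p$. By the argument in Claim~\ref{claim:9}, if both of them were incident with $u'$, then $u'=a_1=a_p$ or $u'=b_1=b_q$, and then $N(u')=\{u,v\}$, which is excluded. Hence one of these non-edges avoids $u'$. That gives an $(n-2)$-nd non-edge, contradicting $q=n-3$. Therefore $\{a,b\}=\{u,v\}$.

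\textbf{Main obstacle.} The delicate step is Step 1, namely justifying Hamiltonicity of $G$ in the range $n\ge 7$. This needs an edge-count Hamiltonicity result, such as the Ore/Erd\H{o}s bound $e\ge\binom{n-1}{2}+1$ with $\delta\ge 2$ and its two exceptional graphs. The first thing to check is that $e(\Gamma')=\binom{n-1}{2}+2$ exceeds the edge counts of those exceptions:
\[
e\bigl(K_1\vee(K_{n-3}\cup K_2)\bigr)=\binom{n-2}{2}+3,\qquad e\bigl(K_2\vee(K_{n-4}\cup I_2)\bigr)=\binom{n-2}{2}+4 .
\]
Both are below $\binom{n-1}{2}+2$ for $n\ge 7$, so this check goes through. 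Once Hamiltonicity is in hand, the counting in Step 2 is immediate.
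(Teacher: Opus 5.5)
Your proof is correct, but it takes a different route from the paper's.

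\textbf{The paper's route.} The paper first uses $q=n-3$ to get $G-u'=K_{n-1}$. It then excludes $u'\in\{u,v\}$ by exhibiting a positive Hamiltonian $v$--$a$ path in $\Gamma'-u$. Finally, for $\{a,b\}\neq\{u,v\}$, it builds by hand a Hamiltonian cycle of $K_{n-1}$ through both $ab$ and $uv$, and reroutes $ab$ through $u'$ to obtain an explicit negative Hamiltonian cycle.

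\textbf{Your route.} You import the chord argument of Claim~\ref{claim:9}. Every Hamiltonian cycle $H$ must have $uv$ as a chord, which forces the two cross non-edges $a_1b_q$ and $b_1a_p$. The chord also gives $d(u),d(v)\ge 3$, which disposes of $u'\in\{u,v\}$ at once. One forced non-edge must avoid $u'$, and that is an $(n-2)$-nd non-edge, contradicting $q=n-3$.

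\textbf{Comparison.} Your version needs no explicit cycle construction inside $K_{n-1}$. It treats the case $u'\in\{u,v\}$ uniformly, and it shows that this claim is essentially the edge-count counterpart of Claim~\ref{claim:9}. The paper's version is self-contained, exhibits the forbidden cycle directly, and needs no a priori Hamiltonicity of $G$.

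\textbf{A correction to Step 1.} The Hamiltonicity result you quote is misstated. The graphs $K_1\vee(K_{n-3}\cup K_2)$ and $K_2\vee(K_{n-4}\cup I_2)$ have only $\binom{n-2}{2}+3$ and $\binom{n-2}{2}+4$ edges. They are exceptions to an Erd\H{o}s-type bound near $\binom{n-2}{2}+4$, not to $\binom{n-1}{2}+1$. Also, Lemma~\ref{lemma:2} is a spectral statement valid only for $n>8$, so it would not cover $n=7,8$.

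None of this affects your conclusion. Since $e(\Gamma')=\binom{n-1}{2}+2$ exactly, Ore's classical edge theorem gives Hamiltonicity of $G$ with no exceptions. Even more simply, do the counting of Step 2 first: it gives $G-u'=K_{n-1}$, and a Hamiltonian $a$--$b$ path in $K_{n-1}$ closed through $u'$ is a Hamiltonian cycle of $G$. So the ``main obstacle'' you identify is not really an obstacle.
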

		\begin{proof}
			The vertex $u'$ is incident with exactly $(n-1)-2=n-3$ non-edges. By Claim~\ref{cla:1}, the total number of non-edges of $\Gamma'$ is $q=n-3$. Hence all non-edges are incident with $u'$, which yields $G-u'=K_{n-1}$. Consequently, every vertex of $\Gamma'-u'$ has degree at least $n-2>2$, so $u'$ is the unique vertex of degree $2$ in $\Gamma'$. We first rule out \(u'=u\) and \(u'=v\). Suppose that \(u'=u\). After interchanging \(a\) and \(b\) if necessary, we may assume that \(b=v\). Then $N_{\Gamma'}(u)=\{v,a\}$. Since $\Gamma'-u=(K_{n-1},+)$, there exists a Hamiltonian path from $v$ to $a$ in $\Gamma'-u$. This path consists entirely of positive edges, and together with $uv$ and $ua$ it forms a negative Hamiltonian cycle containing only the negative edge $uv$, a contradiction. The case $u'=v$ is analogous. Therefore $u'\notin\{u,v\}$. Finally, suppose for contradiction that $\{a,b\}\neq\{u,v\}$. Then $ab$ and $uv$ are two distinct edges of $G-u'=K_{n-1}$. In a complete graph, any two distinct edges can be simultaneously contained in a Hamiltonian cycle: if the two edges are disjoint, we may arrange them as two consecutive segments; if they share a common endpoint, we may arrange them as a path of length $2$ with that endpoint as the middle vertex. Thus $\Gamma'-u'$ contains a Hamiltonian cycle containing both $ab$ and $uv$. We replace the edge \(ab\) in this Hamiltonian cycle by the path \(a-u'-b\). Since \(u'a\) and \(u'b\) are positive and \(ab \neq uv\), the resulting Hamiltonian cycle still contains the unique negative edge \(uv\), and hence is a negative Hamiltonian cycle, a contradiction. Therefore $\{a,b\}=\{u,v\}$. Since $G-u'=K_{n-1}$, $N_{\Gamma'}(u')=\{u,v\}$, and $uv$ is the unique negative edge of $\Gamma'$, we conclude 
			that \(\Gamma' \cong \Gamma_1\), which means that \(\Gamma\) is switching isomorphic to \(\Gamma_1\).
		\end{proof}
		
   This completes the proof of Theorem~\ref{thm:edge}. \qed
\end{proof}
	
	\section*{Acknowledgements}
	The author thanks Professor Dan Li for helpful discussions.

\end{document}